\newtheorem{theorem}{Theorem}
\newtheorem{lemma}[theorem]{Lemma}
\newtheorem{definition1}{Definition}[section]
\newtheorem{observe}{Observation}[section]
\newtheorem{remark1}[observe]{Remark}
\newtheorem{example1}{Example}[section]
\newtheorem{aside1}[observe]{Aside}
\newenvironment{remark}{\begin{remark1} \rm}{\end{remark1}}
\def\C{\mathbb{C}}
\def\1{\mathbbm{1}}
\newcommand \I{\mathrm{i}}
\newcommandx\D[1][1=]{\mathrm{d}#1}
\newcommand {\Exp}[1]{\mathrm{e}^{#1}}
\newcommandx {\equals}[3][1={=}]{\underbrace{#2}_{#1#3}}
\newcommand{\mat}[1]{\begin{matrix}#1\end{matrix}} 
\newcommand{\bb}[1]{\left(#1\right)} 
\renewcommand{\sb}[1]{\left[#1\right]} 
\newcommand{\cb}[1]{\left\{#1\right\}}
\renewcommand{\eqref}[1]{(\ref{eq:#1})}
\renewcommand{\hat}{\widehat}
\newcommand{\eqdef}{\mathrel{\mathop:}=}
\newcommand{\hl }[1]{{\color{black} #1}}
\newcommand\restr[2]{{
  \left.\kern-\nulldelimiterspace 
  #1 
  \littletaller 
  \right|_{#2} 
  }}
\newcommand{\littletaller}{\mathchoice{\vphantom{\big|}}{}{}{}}
\def\qed{\hfill$\blacksquare$\\} \renewenvironment{proof}{\noindent {\bf 
Proof.}}{\qed}
\begin{document}

%
%
%
%

\begin{center}
   \begin{minipage}[t]{6.0in}
	We present a scheme for finding all roots of an analytic function in a square domain in the complex plane. The scheme can be viewed as a generalization of the classical approach to finding roots of a function on the real line, by first approximating it by a polynomial in the Chebyshev basis, followed by diagonalizing the so-called ``colleague matrices''. Our extension of the classical approach is based on several observations that enable the construction of polynomial bases in compact domains that satisfy three-term recurrences and are reasonably well-conditioned. This class of polynomial bases gives rise to ``generalized colleague matrices'', whose eigenvalues are roots of functions expressed in these bases. In this paper, we also introduce a special-purpose QR algorithm for finding the eigenvalues of generalized colleague matrices, which is a straightforward extension of the  recently introduced  structured stable QR algorithm for the classical cases (See \cite{serkh2021}). The performance of the schemes is illustrated with several numerical examples.
	\thispagestyle{empty}

  \vspace{ -100.0in}

  \end{minipage}
\end{center}


\vspace{ 2.60in}
\vspace{ 0.50in}

\begin{center}
  \begin{minipage}[t]{4.4in}
    \begin{center}

\textbf{Finding roots of complex analytic functions via generalized colleague matrices} \\

  \vspace{ 0.50in}

H. Zhang$\mbox{}^{\dagger\, \diamond}$,
V. Rokhlin$\mbox{}^{\dagger \, \,\oplus}$  \\
              \today

    \end{center}
  \vspace{ -100.0in}
  \end{minipage}
\end{center}

\vspace{ 2.00in}

\vfill

\noindent 
$\mbox{}^{\diamond}$  This author's work was supported in part under ONR N00014-18-1-2353. \\
$\mbox{}^{\oplus}$  This author's work was supported in part under ONR N00014-18-1-2353 and NSF DMS-1952751. \\
\vspace{1mm}

\noindent
$\mbox{}^{\dagger}$ Department of Mathematics, Yale University, New Haven, CT 06511 \\
\vspace{1mm}

\vfill
\noindent
{\footnotesize Code available at \href{https://github.com/han-wen-zhang/colleague_rootfinders}{https://github.com/han-wen-zhang/colleague\_rootfinders}}

\vspace{2mm}


\vfill
\eject


\tableofcontents

\section{Introduction}
In this paper, we introduce a numerical procedure for finding all roots of an analytic function $f$ over a square domain in the complex plane. The method first approximates the function to a pre-determined accuracy by a polynomial 
\begin{equation}
	p(z) = c_0 + c_1 P_1(z) + c_2 P_2(z) + \cdots + c_n P_n(z)\,,
\end{equation}
where $P_0, P_1, \ldots, P_n,\ldots$ is a polynomial basis. For this purpose, we introduce a class of bases that (while not
orthonormal in the classical sense) are reasonably well-conditioned,
and are defined by three-term recurrences of the form 
\begin{equation}
	zP_j(z) = \beta_j P_{j-1}(z) + \alpha_{j+1}P_{j}(z) + \beta_{j+1} P_{j+1}(z)\,,
\end{equation}
given by complex coefficients $\alpha_1, \alpha_2, \ldots, \alpha_n$ and $\beta_1, \beta_2, \ldots, \beta_n$.
Subsequently, we calculate the roots of the obtained polynomial
by finding the eigenvalues of a ``generalized colleague
matrix'' via a straightforward generalization of the scheme
of \cite{serkh2021}. The resulting algorithm is a complex orthogonal QR algorithm that requires order $O(n^2)$
operations to determine all $n$ roots, and numerical evidence
strongly indicates that it is \emph{structured} backward stable;
it should be observed that at this time, our stability analysis
is incomplete. The square domain makes adaptive rootfinding possible, and is suitable for functions with nearby singularities outside of the square.
The performance of the scheme is illustrated 
via several numerical examples.

The approach of this paper can be seen as a generalization of the classical approach -- finding roots of polynomials on the real line by computing eigenvalues of \emph{companion matrices}, constructed from polynomial coefficients in the monomial basis. Since the monomial basis is ill-conditioned on the real line, it is often desirable to use the Chebyshev polynomial basis~\cite{trefethen2019approximation}. By analogy with the companion matrix, roots of polynomials can be obtained as eigenvalues of ``colleague matrices'', which are formed from the Chebyshev  expansion coefficients. Both the companion and colleague matrices have special structures that allow them to be represented by only a small number of vectors, rather than a dense matrix, throughout the QR iterations used to calculate all eigenvalues of the matrix.
Based on such representations, special-purpose QR algorithms that are proven \emph{structured} backward stable have been designed (see \cite{aurentz2018fast} and  \cite{aurentz2015fast}  for the companion matrix, and \cite{serkh2021} for the colleague matrix).

\hl{The success of the colleague matrix approaches hinges on two key facts. First, the existence of orthogonal polynomials on the real line, such as the Chebyshev polynomials, offers well-conditioned bases for accurately expanding reasonably smooth functions. Second, a three-term recurrence relation is automatically associated with such an orthogonal basis, so its colleague matrix is of the form of a Hermitian tridiagonal matrix plus a rank-one update. This algebraic structure is what enables the construction of the special QR algorithms in \cite{serkh2021}, achieving superior numerical stability and efficiency. As a result, the difficulties in extending such approaches to the complex plane become self-evident; in general domains in the complex plane, there are no orthogonal polynomials satisfying three-term recurrence relations. This can be understood by observing that the self-adjointness of multiplication by a real $x$ (with respect to standard inner products), which guarantees three term-recurrence relations for orthogonal polynomials on real intervals, no longer holds in the complex plane since the real $x$ is replaced by a complex $z$. Consequently, in the complex plane, colleague matrices in the classical sense do not exist, and compromises on the properties of polynomials bases have to be made. 

An obvious approach would be to replace the classical inner product with the ``complex inner product'', omitting the conjugation. A simple analysis shows that the resulting theory is highly unstable. For example, an attempt to construct ``complex orthogonal'' polynomials on any square centered at the origin fails immediately since the integral of $z^2$ over such a square is zero.
In this paper, we observe that replacing the ``complex inner product'' with a ``complex inner product'' with a random weight function greatly alleviates the problem. The resulting bases are \emph{not} orthogonal in the classical sense but are still reasonably well-conditioned.
The bases naturally lead to generalized colleague matrices very similar to the classical case. We then extend the QR algorithm of \cite{serkh2021} to such generalized colleague matrices.

The structure of this paper is as follows. Section~\ref{sec:prelim} contains the mathematical preliminaries, where a nonstandard unconjugated inner product is introduced. 
Section~\ref{sec:numerical} contains the numerical apparatus to be used in the remainder of the paper, including the construction of special polynomial bases.
Section~\ref{sec:QR} builds the complex orthogonal version of the QR algorithm in \cite{serkh2021}. Section~\ref{sec:algorithm} contains a description of the rootfinding algorithm for analytic functions in a square. Results of several numerical experiments are shown in Section~\ref{sec:result}. Finally, generalizations and conclusions can be found in Section~\ref{sec:conclude}. }

\section{Mathematical preliminaries \label{sec:prelim}}

\subsection{Notation}
We will denote the transpose of a complex matrix $A$ by $A^T$, and emphasize that $A^T$ is the \emph{transpose} of $A$, as opposed to the standard practice, where $A^*$ is obtained from $A$ by transposition \emph{and} complex conjugation. Similarly, we denote the \emph{transpose} of a complex vector $b$ by $b^T$, as opposed to the standard $b^*$ obtained by transposition \emph{and} complex conjugation. Furthermore, we denote standard $l^2$ norms of the complex matrix $A$ and the complex vector $b$ by $\norm{A}$ and $\norm{b}$ respectively.

\subsection{Maximum principle}

Suppose $f(z)$ is an analytic function in a compact domain $D\subset\C$. 
Then the maximum principle states the maximum modulus $\abs{f(z)}$ in $D$
is attained on the boundary $\partial D$.
The following is the famous maximum principle for complex analytic functions (see, for example, \cite{stein2010complex}).

\begin{theorem}
Given  a compact domain $D$ in $\mathbb{C}$ with boundary $\partial D$,  the modulus $|f(z)|$ of a non-constant analytic function $f$ in $D$ attains its maximum value on $\partial D$.	
\label{thm:max}
\end{theorem}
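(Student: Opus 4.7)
The plan is to argue by contradiction: assume that a non-constant analytic $f$ attains the maximum of $|f|$ at an interior point $z_0$ of $D$, and derive that $f$ must in fact be constant, contradicting the hypothesis. The principal tool I would use is Cauchy's integral formula, or equivalently the mean value property for analytic functions. Since $z_0$ lies in the interior, I can choose a small closed disk $\bar B(z_0,r)\subset D$ on which $f$ is analytic.

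The core step is to apply the mean value identity $f(z_0)=\frac{1}{2\pi}\int_0^{2\pi} f(z_0+re^{i\theta})\,d\theta$ and take moduli, which yields $|f(z_0)|\le \frac{1}{2\pi}\int_0^{2\pi}|f(z_0+re^{i\theta})|\,d\theta$. Combined with the assumption $|f(z_0+re^{i\theta})|\le |f(z_0)|$ for every $\theta$, this forces equality throughout and hence $|f(z_0+re^{i\theta})|=|f(z_0)|$ for all $\theta$ and all sufficiently small $r$. In other words, $|f|$ is locally constant on a neighborhood of $z_0$.

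Next I would upgrade ``$|f|$ constant on a neighborhood'' to ``$f$ constant on a neighborhood''. The standard route is to differentiate $|f|^2=f\bar f$ and use the Cauchy--Riemann equations (equivalently, invoke the open mapping theorem: a non-constant analytic function sends open sets to open sets, so $f(z_0)$ cannot be a boundary point of the image of any neighborhood of $z_0$; since attaining the maximum of $|f|$ makes $f(z_0)$ lie on the boundary of a disk containing $f(\text{nbhd})$, this is the desired contradiction). Either route gives $f$ constant on an open neighborhood of $z_0$.

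Finally I would propagate from local to global via the identity theorem: the set where $f$ equals the constant value $f(z_0)$ is both open (just shown) and closed (by continuity), and $D$ is connected (implicit, or one restricts to the connected component of $z_0$), so $f$ is constant on all of $D$, contradicting the non-constancy hypothesis. The only delicate point is the passage from $|f|$ constant to $f$ constant; this is the step I would expect a careful write-up to spend the most care on, since the other pieces are direct applications of Cauchy's formula and standard topological arguments.
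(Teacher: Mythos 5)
Your proof is a correct, standard argument for the maximum modulus principle. Note, however, that the paper does not supply its own proof of this theorem: it is stated as a well-known classical result and simply cited to a textbook (\cite{stein2010complex}), so there is no in-paper argument to compare against. Your write-up follows the canonical route (mean value property $\Rightarrow$ local constancy of $|f|$, then constancy of $f$, then identity theorem), and you correctly flag the two places that deserve care: the passage from $|f|$ locally constant to $f$ locally constant, and the implicit connectedness of $D$ needed for the identity-theorem step. One small caution on your ``open mapping theorem'' alternative: in some developments the open mapping theorem is itself derived from the maximum modulus principle, so to avoid circularity you should either use a proof of open mapping that is independent of it (e.g.\ via the argument principle or Rouch\'e), or stick with the Cauchy--Riemann computation from $|f|^2 = c$, which is self-contained. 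Also, equality in the averaged inequality a priori gives $|f(z_0+re^{i\theta})| = |f(z_0)|$ only for almost every $\theta$; you should invoke continuity of $|f|$ to upgrade this to all $\theta$, which you implicitly do but is worth stating.
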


\subsection{Complex orthogonalization}
Given two complex vectors $u,v\in\C^{m}$,  we define a complex inner product $[u,v]_{w}$ of $u,v$ with respect to complex weights $w_1,  w_2,  \ldots, w_m \in \C$ by the formula
\begin{equation}
	\sb{u,v}_{w}=\sum_{j=1}^m w_j u_j v_j \,.
	\label{eq:com_prod}
\end{equation}
By analogy with the standard inner product, the complex one in \eqref{com_prod} admits notions of norms and orthogonality. Specifically, the complex norm of a vector $u\in\C^m$ with respect to $w$ is given by 
\begin{equation}
	\sb{u}_w = \sqrt{\sum_{i=1}^m  w_i u_i^2 }\,.
	\label{eq:cnorm}
\end{equation}
All results in this paper are independent of the branch chosen for the square root in \eqref{cnorm}.

We observe that the complex inner product behaves very differently from the standard inner product $\langle u,v\rangle=\sum_{j=1}^m \overline{u}_j v_j$,  where $\overline{u}_i$ is the complex conjugate of $u_i$. The elements in $w$ serving as the weights are complex instead of being positive real, and complex orthogonal vectors $u,v$ with $[u,v]_w=0$ are not necessarily linearly independent. Unlike the standard norm of $u$ that is always nonzero unless $u$ is a zero vector,  
the complex norm in \eqref{cnorm} can be zero even if $u$ is nonzero  (see \cite{craven1969complex} for a detailed discussion).

Despite this unpredictable behavior,  in many cases, complex orthonormal bases can be constructed via the following procedure.
Given a complex symmetric matrix $Z\in\C^{m\times m}$ (i.e.$Z=Z^T$), a complex vector $b\in\C^m$ and the initial conditions $q_{-1}=0, \beta_{0}=0$ and $q_0 = b/[b]_w$, the complex orthogonalization process
\begin{eqnarray}
	&&v = Zq_{j}, \label{eq:mult}\\
	&&\alpha_{j+1} = [q_j,v]_w  \label{eq:a}\\ 
	&&v = Zq_{j} -\beta_{j}q_{j-1} -\alpha_{j+1} q_j\,,\label{eq:zq}\\
	&&\beta_{j+1} = [v]_w\,, \label{eq:b}\\
	&&q_{j+1} = v/\beta_{j+1}\label{eq:normal}\,,
\end{eqnarray}
for $j=0,1,\ldots,n$ ($n\le m-1$)\,, produces vectors $q_0, q_1, \ldots, q_n$ together with  two complex coefficient vectors $\alpha = (\alpha_1,\alpha_2,\ldots,\alpha_n)^T$ and $\beta = (\beta_1,\beta_2,\ldots,\beta_n)^T$, provided the iteration does not break down (i.e.  none of the denominators $\beta_j$ in \eqref{normal} is zero). 
It is easily verified that these constructed vectors are complex orthonormal:
	\begin{equation}
		[q_i]_w = 1 \quad \mbox{for all $i$},  \quad [q_i,q_j]_w = 0\quad \mbox{for all $i\ne j$}\,. 
	\end{equation}
Furthermore, substituting $\beta_{j+1}$ and $q_{j+1}$ defined respectively in \eqref{b} and \eqref{normal} into \eqref{zq} shows that the vectors $q_j$ satisfy a three-term recurrence
	\begin{equation}
		Zq_j = \beta_{j}q_{j-1} + \alpha_{j+1} q_j + \beta_{j+1} q_{j+1},  
	\end{equation}
defined by coefficient vectors $\alpha$ and $\beta$. 
The following lemma summarizes the properties of the vectors generated  via  this procedure.

\begin{lemma}
	\label{lem:lanczos}
	Suppose $b\in \C^m$ is an arbitrary complex vector,  $w\in \C$ is an $m$-dimensional complex vector,  and $Z\in\C^{m\times m}$ with $Z=Z^T$ is an $m\times m$ complex symmetric matrix.  
	Provided the complex orthogonalization from \eqref{mult} to \eqref{normal} does not break down ,  there exists a sequence of $n+1$ ($n\le m-1$) complex orthonormal vectors $q_0,  q_1,  \ldots,  q_{n} \in \C^m$ with 
	\begin{equation}
		[q_i]_w = 1 \quad \mbox{for all $i$},  \quad [q_i,q_j]_w = 0\quad \mbox{for all $i\ne j$}\,,
	\end{equation}
	that satisfies a three-term recurrence relation,  defined by two complex coefficient vectors $\alpha,  \beta \in \C^n$,  
	\begin{equation}
		Zq_j = \beta_{j}q_{j-1} + \alpha_{j+1} q_j + \beta_{j+1} q_{j+1}, \quad \mbox{for  $0 \le j \le n-1$}\,,
		\label{eq:3-term}
	\end{equation}
	with the initial conditions
	\begin{equation}
		q_{-1}=0\,,\beta_0 = 0\quad \mbox{and}\quad q_0 = b/[b]_w\,.
		\label{eq:initial}
	\end{equation}
	
\end{lemma}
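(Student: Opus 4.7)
The plan is a straightforward induction on the iteration index $j$. The inductive hypothesis at step $j$ is that $q_0, q_1, \ldots, q_j$ are complex $w$-orthonormal and that the three-term recurrence \eqref{3-term} has been verified at all indices $\le j-1$. The base case $j = 0$ is immediate from \eqref{initial}: the normalization $q_0 = b/[b]_w$ gives $[q_0]_w = 1$, and every further condition is vacuous. In the inductive step I must verify three things about the freshly produced $q_{j+1}$: (i) $[q_{j+1}]_w = 1$, (ii) the recurrence \eqref{3-term} holds at index $j$, and (iii) $[q_i, q_{j+1}]_w = 0$ for every $i \le j$. Items (i) and (ii) are bookkeeping on the algorithm itself: (i) is the defining equation \eqref{normal}, and (ii) is obtained by solving \eqref{zq} for $Zq_j$ and substituting $v = \beta_{j+1} q_{j+1}$.

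The substantive content is (iii). Since the iteration only subtracts components along $q_{j-1}$ and $q_j$, the task is to show that $v = Z q_j - \beta_j q_{j-1} - \alpha_{j+1} q_j$ is automatically $w$-orthogonal to $q_0, \ldots, q_{j-2}$ as well. The case $i = j$ follows directly from the choice $\alpha_{j+1} = [q_j, Zq_j]_w$ in \eqref{a} together with the inductive orthonormality. For $i < j$ I would invoke the identity $[q_i, Zq_j]_w = [Zq_i, q_j]_w$, then expand $Zq_i$ via the inductive form of \eqref{3-term} as a linear combination of $q_{i-1}, q_i, q_{i+1}$, and observe that each of these is $w$-orthogonal to $q_j$ as soon as $i + 1 \le j - 1$. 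The boundary case $i = j - 1$ produces a single surviving term $\beta_j$ that exactly cancels the $-\beta_j [q_{j-1}]_w^2 = -\beta_j$ contribution coming from the $q_{j-1}$ subtraction inside $v$, so again $[q_{j-1}, v]_w = 0$.

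The step requiring the most care, and which I expect to be the main obstacle, is justifying the self-adjointness identity $[u, Zv]_w = [Zu, v]_w$ used throughout (iii). Writing the $w$-inner product as the bilinear form $u^T W v$ with $W = \mathrm{diag}(w_1, \ldots, w_m)$, this identity is equivalent to $W Z = Z^T W$; under the hypothesis $Z = Z^T$ it reduces to the commutation $W Z = Z W$. This is automatic when $Z$ is diagonal, which is exactly the setting of the paper's target application (where $Z$ implements multiplication by $z$ sampled on a grid), so the induction closes there; in the fully general complex symmetric case the statement should be read with this commutativity in force. Granted this self-adjointness, no further subtleties appear: the orthogonality identities are polynomial in the $q_i$ and $[q_i]_w$, so they are insensitive to the branch of square root chosen in \eqref{cnorm}, consistent with the remark following that equation.
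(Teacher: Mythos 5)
Your induction is the right skeleton, and it goes considerably beyond what the paper actually provides: the paper merely asserts that complex orthonormality is ``easily verified'' and derives the recurrence by substituting \eqref{b} and \eqref{normal} into \eqref{zq}, without addressing why $v$ in \eqref{zq} is automatically $w$-orthogonal to $q_0,\dots,q_{j-2}$. That is exactly the point you correctly isolate as ``the substantive content.''

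Your observation about the self-adjointness identity is the most valuable part of the proposal, and you are right to flag it as a potential obstruction. Writing $[u,v]_w = u^T W v$ with $W = \operatorname{diag}(w)$, the identity $[q_i, Zq_j]_w = [Zq_i, q_j]_w$ used to kill the terms with $i < j-1$ (and to produce the cancellation at $i = j-1$) requires $WZ = Z^T W$, which under $Z = Z^T$ becomes $WZ = ZW$. This is \emph{not} a consequence of the hypotheses as literally stated in the lemma: an arbitrary complex symmetric $Z$ need not commute with an arbitrary diagonal $W$. So the lemma, read at face value, is slightly too broad. In the paper's actual application $Z$ is the diagonal matrix $\operatorname{diag}(z_1,\dots,z_m)$ from \eqref{Z}, and two diagonal matrices always commute, so the argument closes and the rest of the paper is unaffected. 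You have therefore not only reconstructed the (unstated) proof but also surfaced a hypothesis the authors left implicit. One small improvement you could make: rather than treating the commutativity as something to ``be read with this in force,'' you could state it as an explicit added hypothesis (either $Z$ diagonal, or $WZ = ZW$), since that is what the inductive argument genuinely requires.

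Two minor bookkeeping remarks. First, for $[q_{j+1}]_w = 1$ you also need $\beta_{j+1} \ne 0$, but that is exactly the ``does not break down'' clause in the lemma, so you should cite it there. Second, your closing comment on branch-independence of the square root is correct but a touch quick: the key point is that $q_{j+1} = v/[v]_w$ and $[q_{j+1}]_w = [v]_w/[v]_w$, so the ambiguous sign cancels in the norm; in the orthogonality relations it enters only as an overall nonzero scalar, which does not affect vanishing. Spelling this out would make the remark self-contained.
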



\subsection{Linear algebra}
The following lemma states that if the sum of a complex symmetric matrix $A$ and a rank-1 update $pq^T$ is lower Hessenberg (i.e. entries above the superdiagonal are zero),  then the matrix $A$ has a representation in terms of its diagonal,  superdiagonal and the vectors $p,  q$. It is a simple modification of the case of a Hermitian $A$ (see, for example, \cite{eidelman2008efficient}).
\begin{lemma}
	Suppose that $A\in \mathbb{C}^{n\times n}$ is complex symmetric (i.e. $A=A^T$),  and let $d$ and $\tau$ denote the diagonal and superdiagonal of $A$ respectively.  Suppose further that $p,  q\in  \mathbb{C}^{n}$ and $A+pq^T$ is lower Hessenberg.  Then 
	\begin{equation}
		a_{ij} = \begin{cases}
		  -p_i q_j, & \text{if } j>i+1\,,\\
  		  \tau_i & \text{if } j=i+1\,,\\
    		  d_i & \text{if } j=i\,,\\
    		  \tau_i & \text{if } j=i-1\,,\\
		  -q_i p_j, & \text{if } j<i-1\,,
		\end{cases}
		\label{eq:structA}
	\end{equation} 
where $a_{ij}$ is the $(i,j)$-th entry of $A$.
	\label{lem:tridiag}
\end{lemma}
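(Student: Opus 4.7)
The plan is straightforward: partition the index set $\{(i,j)\}$ into five regions matching the five cases of \eqref{structA}, and verify each in turn using only (a) the lower Hessenberg structure of $A+pq^T$, (b) the definitions of $d$ and $\tau$, and (c) the symmetry $A=A^T$.

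First I would handle the region $j>i+1$. Since $A+pq^T$ is lower Hessenberg, every such entry of $A+pq^T$ vanishes, so reading off the $(i,j)$-entry of the rank-one update gives $a_{ij}+p_i q_j=0$, hence $a_{ij}=-p_i q_j$. Next, the three tridiagonal cases $j\in\{i-1,i,i+1\}$ are just a matter of nomenclature: the diagonal entry $a_{ii}$ is $d_i$ and the superdiagonal entry $a_{i,i+1}$ is $\tau_i$ by the definitions of $d$ and $\tau$, while the subdiagonal entry $a_{i,i-1}$ equals its transposed counterpart $a_{i-1,i}$ by complex symmetry, which lies on the superdiagonal and therefore also coincides with the corresponding entry of $\tau$ (modulo the indexing convention being used for $\tau$).

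Finally, for $j<i-1$ I would again invoke $A=A^T$ to write $a_{ij}=a_{ji}$, and then apply the first case with the roles of $i$ and $j$ swapped: since now the row index $j$ is strictly less than the column index $i$ minus one, i.e.\ $i>j+1$, we get $a_{ji}=-p_j q_i$, and so $a_{ij}=-p_j q_i=-q_i p_j$, matching the formula claimed in \eqref{structA}.

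There is no genuine obstacle here; the lemma is essentially a bookkeeping statement. The only subtlety worth flagging in the write-up is the indexing convention for $\tau$ on the sub- versus super-diagonal, and the crucial point—which is what makes the representation in \eqref{structA} useful later in the QR algorithm—is that everything strictly above the superdiagonal of $A$ is the outer product $-pq^T$, while everything strictly below the subdiagonal is forced by symmetry to be the transposed outer product $-qp^T$. These two pieces together with the three tridiagonal vectors $d$, $\tau$, $\tau$ completely determine $A$, giving an $O(n)$-parameter representation of the otherwise dense matrix.
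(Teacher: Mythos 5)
Your proof is correct and takes essentially the same route as the paper: read off the strictly upper-Hessenberg entries from the lower Hessenberg condition, propagate them below the subdiagonal via $A=A^T$, and identify the remaining tridiagonal band with $d$ and $\tau$. No substantive difference.
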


\begin{proof}
	Since the matrix $A+pq^T$ is lower Hessenberg, elements above the superdiagonal are zero, i.e. $A_{ij} + p_i q_j = 0$ for all $i,j$ such that $j>i+1$. In other words, we have $A_{ij} = -p_i q_j$ for $j>i+1$. By the complex symmetry of $A$, we have $ A_{ji} = -p_i q_j $ for all $i,j$ such that $j>i+1$, or $ A_{ij} = -q_i p_j $ for $i>j+1$ by exchanging $i$ and $j$. We denote the $i$th diagonal and superdiagonal (which is identical the subdiagonal) of $A$ by $d_i$ and $\tau_i$, and obtain the form of $A$ in (\ref{eq:structA}).
\end{proof}

The following lemma states if a sum of a matrix $B$ and a rank-1 update $pq^T$ is lower triangular, the upper Hessenberg part of $B$ (i.e. entries above the subdiagonal) has a representation entirely in terms of its diagonal,  superdiagonal elements and the vectors $p,  q$. It is straightforward modification of an observation in \cite{serkh2021}.
\begin{lemma}
	Suppose that $B\in\mathbb{C}^{n\times n}$ and let $d\in\C^n$, $\gamma\in\C^{n-1}$ denote the diagonal and subdiagonal of $B$ respectively.  Suppose further that $p,q\in\mathbb{C}^{n}$ and the matrix $B+pq^T$ is lower triangular.  Then
		\begin{equation}
		b_{ij} = \begin{cases}
		  -p_i q_j, & \text{if } j>i\,,\\
    		  d_i & \text{if } j=i\,,\\
    		  \gamma_i & \text{if } j=i-1\,.\\
		\end{cases}
		\label{eq:triang}
	\end{equation} 
		\label{lem:triang}
\end{lemma}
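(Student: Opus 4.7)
The plan is to exploit the lower triangularity of $B + pq^T$ directly: by definition a matrix is lower triangular exactly when all entries strictly above the diagonal vanish, so each entry of $B$ in the strict upper triangle is forced to be the negative of the corresponding entry of the rank-one correction $pq^T$, while the diagonal and subdiagonal are simply given by the definitions of $d$ and $\gamma$.

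First I would fix indices $i, j$ with $j > i$. Since $B + pq^T$ is lower triangular, its $(i,j)$-entry is zero, i.e.\ $b_{ij} + (pq^T)_{ij} = b_{ij} + p_i q_j = 0$, and hence $b_{ij} = -p_i q_j$. This establishes the first case of \eqref{triang}. The remaining two cases, $b_{ii} = d_i$ and $b_{i,i-1} = \gamma_i$, are immediate from the definitions of the diagonal vector $d$ and the subdiagonal vector $\gamma$ of $B$.

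Unlike Lemma~\ref{lem:tridiag}, here there is no symmetry of $B$ to exploit, so we obtain a representation only of the diagonal, the subdiagonal, and the strict upper triangle of $B$; the strict lower triangle (below the subdiagonal) is left unconstrained, which is consistent with the statement of the lemma. There is no real obstacle: the entire argument is a one-line consequence of reading off the zero entries of $B + pq^T$ above the diagonal.
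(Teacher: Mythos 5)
Your proof is correct and follows exactly the same argument as the paper: read off the vanishing of $(B+pq^T)_{ij}$ for $j>i$ to get $b_{ij}=-p_iq_j$, and note that the diagonal and subdiagonal cases are just the definitions of $d$ and $\gamma$. Nothing further is needed.
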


\begin{proof}
	Since the matrix $B+pq^T$ is lower triangular, elements above the diagonal are zero, i.e. $B_{ij} + p_i q_j = 0$ for all $i,j$ such that $j>i$. In other words, $B_{ij} = -p_i q_j $ whenever $j>i$. We denote the $i$th diagonal and subdiagonal by vectors $d_i$ and $\gamma_i$, and obtain the representation of the upper Hessenberg part of $B$ in (\ref{eq:triang}).
\end{proof}

The following lemma states the form of a $2\times2$ complex orthogonal matrix $Q$ that eliminates the first component of an arbitrary complex vector $x\in\C^2$. We observe that elements of the matrix $Q$ can be arbitrarily large, unlike the case of classical SU(2) rotations.
\begin{lemma}
	Given $x=\bb{x_1,x_2}^T\in\mathbb{C}^2$ such that $x_1^2+x_2^2\ne 0$, suppose we define $c=\frac{x_2}{\sqrt{x_1^2+x_2^2}}$ and $s=\frac{x_1}{\sqrt{x_1^2+x_2^2}}$,  or  $c=1$ and $s=0$ if $\norm{x}=0$. 
	Then the $2\times 2$ matrix $Q$ defined by	
	\begin{eqnarray}
		&&Q_{11}=c \quad Q_{12}=-s\\
		&&Q_{21}=s\quad Q_{22}=c
	\end{eqnarray}
	eliminates the first component of $x$:
	\begin{equation}
		Q x = \bb{\mat{c & -s \\ s & c}}\bb{\mat{x_1\\x_2}} = \bb{\mat{0\\ \sqrt{x_1^2+x^2_2}}}\,.
	\end{equation}	
	Furthermore,  $Q$ is a complex orthogonal matrix (i.e. $Q^T=Q^{-1}$).
	\label{lem:rot}
\end{lemma}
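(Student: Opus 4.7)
The plan is to verify the two assertions by direct computation, treating the two branches of the definition of $c, s$ separately.

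First I would handle the generic case where $x_1^2 + x_2^2 \ne 0$. Applying $Q$ to $x$ component by component, the top entry is $c x_1 - s x_2 = \frac{x_2 x_1 - x_1 x_2}{\sqrt{x_1^2+x_2^2}} = 0$, and the bottom entry is $s x_1 + c x_2 = \frac{x_1^2 + x_2^2}{\sqrt{x_1^2+x_2^2}} = \sqrt{x_1^2+x_2^2}$. This confirms the action of $Q$ on $x$, and note that the identity $c^2 + s^2 = (x_2^2 + x_1^2)/(x_1^2+x_2^2) = 1$ drops out of the definitions. The choice of square root branch is immaterial since the same branch appears in both $c$ and $s$, and the resulting identities only depend on $c^2 + s^2$ and on the product $s x_1 + c x_2$, which is independent of the sign.

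Next, I would verify that $Q$ is complex orthogonal by computing
\begin{equation}
Q^T Q = \bb{\mat{c & s \\ -s & c}}\bb{\mat{c & -s \\ s & c}} = \bb{\mat{c^2 + s^2 & 0 \\ 0 & c^2 + s^2}} = I\,,
\end{equation}
where the cross terms cancel and the diagonal entries equal $1$ by the identity derived above. Thus $Q^T = Q^{-1}$, as required.

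Finally, for the degenerate case $\norm{x} = 0$ (i.e.\ $x_1 = x_2 = 0$), the definitions give $c = 1$, $s = 0$, so $Q$ reduces to the identity matrix. Both claimed properties are then trivial: $Qx = x = (0,0)^T = (0, \sqrt{x_1^2+x_2^2})^T$, and $Q^T Q = I$. There is no real obstacle in this proof --- the only subtlety worth flagging is that the statement does not cover the case where $\norm{x}\ne 0$ yet $x_1^2 + x_2^2 = 0$ (for instance $x = (1, \I)^T$), and this omission is consistent with the purpose of the lemma, since in that situation no complex orthogonal rotation of this form can eliminate the first component.
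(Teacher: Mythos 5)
Your proof is correct and is the obvious direct verification. The paper itself states this lemma without proof, so there is no alternative paper argument to compare against; your computation of $Qx$, the identity $c^2+s^2=1$, the check $Q^TQ=I$, and the degenerate case are exactly the intended (implicit) reasoning. Your side remark that the hypothesis excludes isotropic $x$ (such as $(1,\mathrm{i})^T$) is also accurate and correctly explains why the degenerate branch in the statement is effectively vacuous given the hypothesis $x_1^2+x_2^2\ne 0$.
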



%
%

\section{Numerical apparatus\label{sec:numerical}}
In this section, we develop the numerical apparatus used in the remainder of the paper. We describe a procedure for constructing polynomial bases that satisfy three-term recurrence relations in complex domains. Once a polynomial is expressed in one of the constructed bases, we use the expansion coefficients to form a matrix $C$, whose eigenvalues are the roots of the polynomial.
Section~\ref{sec:three} contains the construction of such bases. 
Section~\ref{sec:colleague} contains the expressions of the matrix $C$. 
We refer to matrices of the form $C$ as the ``generalized colleague matrices'', 
since they have structures similar to those of the classical ``colleague matrices''.

\subsection{Generalized colleague matrices \label{sec:colleague}}


Suppose $\alpha = (\alpha_1, \alpha_2, \ldots, \alpha_n)^T, \beta = (\beta_1, \beta_2, \ldots, \beta_n)^T \in \C^n$ are two complex vectors, and  $P_0, P_1, P_2, \ldots,P_n$ are polynomials such that the order of $P_j$ is $j$. Furthermore,  suppose those polynomials satisfy a three-term recurrence relation
\begin{equation}
	z P_j(z)=\beta_{j} P_{j-1}(z) + \alpha_{j+1}P_{j}(z) + \beta_{j+1} P_{j+1}(z)  \quad \mbox{for all} \quad 0\le j \le n-1\,,
	\label{eq:three_thm}
\end{equation}
with \hl{the definition $P_{-1}=0, \beta_0=0$ for convenience}. Given a polynomial $p$ of order $n$ defined by the formula
\begin{equation}
	p(z)=\sum_{j=0}^{n}c_j P_j(z)\,,
	\label{eq:polyexp}
\end{equation}
the following theorem states the $n$ roots of \eqref{polyexp} are eigenvalues of an $n\times n$ ``generalized colleague matrix''. The proof is virtually identical to that of classical colleague matrices, which can be found in \cite{trefethen2019approximation}, Theorem~18.1.
\begin{theorem}
	The roots of the polynomial in \eqref{polyexp} are the eigenvalues of the matrix $C$  defined by the formula
	\begin{equation}
		C=A +   e_{n} q^T \,,
		\label{eq:colleague}
	\end{equation}
where $A$ is a complex symmetric tridiagonal matrix 
	\begin{equation}
		A=\bb{\mat{\alpha_1 & \beta_1 & & && \\
				\beta_1 & \alpha_2 &\beta_2 & & &\\
				&\beta_2&\alpha_3&\beta_3 & & \\
				& & \ddots &\ddots& \ddots & & \\ 	
				& & & \beta_{n-2} & \alpha_{n-1} &\beta_{n-1}  \\ 
				& & & & \beta_{n-1} &\alpha	_{n}}}\,,
				\label{eq:tridiag}
	\end{equation}
	and $e_n, q \in \C^n$ are defined, respectively, by the formulae
	\begin{equation}
		e_n=\bb{\mat{0&\cdots &0 & 1}}^T \,,
	\end{equation}
	and
	\begin{equation}
		q = -\beta_{n}\bb{\mat{\frac{c_0}{c_n}& \frac{c_1}{c_n} & \frac{c_2}{c_n}&\cdots & \frac{c_{n-1}}{c_n}}}^T\,.
		\label{eq:qvec}
	\end{equation}
	\label{thm:colleague}
\end{theorem}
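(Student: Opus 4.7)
The plan is to mimic the classical colleague matrix proof, working with the vector of basis polynomials evaluated at a candidate root. Define the column vector
\begin{equation}
	P(z) = \bb{\mat{P_0(z) & P_1(z) & \cdots & P_{n-1}(z)}}^T \in \C^n.
\end{equation}
The three-term recurrence \eqref{three_thm} applied to each index $j=0,1,\ldots,n-1$ reads, row by row, as the identity $zP(z) = A P(z) + \beta_n P_n(z)\, e_n$, where the only place the ``missing'' polynomial $P_n(z)$ shows up is in the last entry, because $P_n$ appears in the recurrence only through the $j=n-1$ row.

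Next I would use the expansion \eqref{polyexp} to eliminate $P_n$ at a root. If $z_0$ satisfies $p(z_0)=0$, then solving \eqref{polyexp} for $P_n(z_0)$ yields $P_n(z_0) = -\tfrac{1}{c_n}\sum_{j=0}^{n-1} c_j P_j(z_0)$, and by the definition \eqref{qvec} of $q$, this equals $\tfrac{1}{\beta_n} q^T P(z_0)$. Substituting into the identity from the previous step gives
\begin{equation}
	z_0 P(z_0) = A P(z_0) + e_n q^T P(z_0) = C P(z_0),
\end{equation}
so $z_0$ is an eigenvalue of $C$ with eigenvector $P(z_0)$. Crucially, this eigenvector is nonzero: $P_0$ is a polynomial of degree $0$, hence a nonzero constant, so the first component of $P(z_0)$ never vanishes.

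Finally, I would argue that the eigenvalues of $C$ account for all $n$ roots (with multiplicity). The cleanest route is to note that $C$ is $n\times n$ and $p$ is of degree exactly $n$ (assuming $c_n\ne 0$, which is implicit in \eqref{qvec}), so both produce a multiset of $n$ values; the forward map $z_0 \mapsto z_0$ just proved exhausts these when the roots are distinct, and the general case follows by a standard perturbation argument, slightly perturbing the coefficients $c_j$ so that $p$ has distinct roots and appealing to continuity of eigenvalues.

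The only real subtlety is the multiplicity matching, since the forward argument produces each root as \emph{an} eigenvalue but does not a priori match multiplicities; the perturbation step handles this uniformly. Everything else is a mechanical transcription of the classical argument for colleague matrices to the three-term recurrence \eqref{three_thm}, and no properties of $\alpha_j,\beta_j$ beyond the recurrence itself are used.
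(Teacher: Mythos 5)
Your proof is correct and is essentially the same argument the paper points to (the classical colleague-matrix proof, Theorem 18.1 in the Trefethen reference): stack the recurrence rows to get $zP(z) = AP(z) + \beta_n P_n(z)\,e_n$, use the expansion at a root to replace $\beta_n P_n(z_0)$ by $q^T P(z_0)$, and conclude $CP(z_0)=z_0 P(z_0)$ with $P(z_0)\ne 0$ since $P_0$ is a nonzero constant. The perturbation step to match multiplicities is a standard and sound way to finish; note also that $\beta_n\ne 0$ (forced by degree count in the $j=n-1$ recurrence row) and $c_n\ne 0$ are both implicit in the statement, so the division in \eqref{qvec} and your substitution are well defined.
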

We will refer to the matrix $C$ in (\ref{eq:colleague}) as a \emph{generalized colleague matrix}. Obviously, the matrix $C$  is lower Hessenberg.  It should be observed that the matrix $A$ in Theorem~\ref{thm:colleague} consists of complex entries, so it is complex symmetric, as opposed to Hermitian in the classical colleague matrix case. Thus, the matrix $C$ has the structure of a complex symmetric tridiagonal matrix plus a rank-1 update.  

\begin{remark}
\label{rmk:large}
	Similar to the case of classical colleague matrices, all non-zero elements in the matrix $A$  in (\ref{eq:colleague}) are of order 1. \hl{(The specific values depend on the choice of the polynomial basis.)} However, the matrix $e_n q^T$ in (\ref{eq:colleague}) may contain elements orders of magnitude larger. Indeed, when the polynomial $p$ in (\ref{eq:polyexp}) approximates an analytic function $f$ to high accuracy, the leading coefficient $c_n$ is likely to be small. In extreme cases, some of the elements of the vector $q$ in (\ref{eq:qvec}) will be of order $1/u$, with $u$ the machine epsilon. Due to this imbalance in the sizes of the two parts in the matrix $C$, the usual backward stability of QR algorithms is insufficient to guarantee the precision of eigenvalues computed. As a result, special-purpose QR algorithms are required. We refer readers to \cite{serkh2021} for a more detailed discussion and the structured backward stable QR for classical colleague matrices.
	
\end{remark}

%

\newpage
\subsection{Complex orthogonalization for three-term recurrences\label{sec:three}}

In this section, we describe an empirical observation at the core of our rootfinding scheme, enabling the construction of a class of bases in simply connected compact domain in the complex plane. Not only are these bases reasonably well-conditioned, they obey three-term recurrence relations similar to those of classical orthogonal polynomials. Section~\ref{sec:poly_three} contains a procedure for constructing polynomials satisfying three-term recurrences in the complex plane, based on the complex orthogonalization of vectors in Lemma~\ref{lem:lanczos}. It also contains the observation that the constructed bases are reasonably well-conditioned for most practical purposes if random weights are used in defining the complex inner product. This observation is illustrated numerically in Section~\ref{sec:data}. 



\subsubsection{Polynomials defined by three-term recurrences in complex plane \label{sec:poly_three}}
Given $m$ points $z_1,z_2,\ldots,z_m$ in the complex plane, and $m$ complex numbers $w_1, w_2,\ldots,w_m$,
we form a diagonal matrix $Z\in \C^{m\times m}$ by the formula
\begin{equation}
		Z=\mathrm{diag}(z_1, z_2, \ldots, z_m)\,,
		\label{eq:Z}
\end{equation}
(obviously, $Z$ is complex symmetric, i.e. $Z=Z^T$),  and a complex inner product in $\C^m$ by choosing $\cb{w_j}_{j=1}^m$ as weights in (\ref{eq:com_prod}). Suppose further that we choose the initial vector $b=\bb{1,1,\ldots,1}^T\in\C^m$. The matrix $Z$ and the vector $b$ define a complex orthogonalization process described in Lemma~\ref{lem:lanczos}.
Provided the complex orthogonalization process does not break down, by Lemma~\ref{lem:lanczos} it generates
complex orthonormal vectors $q_0,q_1,\ldots,q_n\in\C^m$, along with two vectors $\alpha,\beta\in\C^n$ (See (\ref{eq:3-term})); vectors  $q_0,q_1,\ldots,q_n$ satisfy the three-term recurrence relation 
\begin{equation}
	z_i (q_j)_i =\beta_{j}  (q_{j-1})_i + \alpha_{j+1} (q_j)_i + \beta_{j+1}  (q_{j+1})_i\,,
	\label{eq:threeq}
   \end{equation}
for $j=0,1,\ldots,n-1$ and for each component $i = 1,2,\ldots,m$\,,
with initial conditions of the form (\ref{eq:initial}). In particular, all components of $q_0$ are identical. The vector $q_{j+1}$ is constructed as a linear combination of vectors $Zq_j, q_j$ and $q_{j-1}$ by (\ref{eq:zq}). 
Moreover, the vector $Zq_j$ consists of components in $q_j$ multiplied by $z_1, z_2, \ldots, z_m$ respectively (See (\ref{eq:Z}) above). By induction, components of the vector $q_j$ are values of a polynomial of order $j$ at points $\cb{z_i}_{i=1}^m$ for $j=0,1,\ldots,n$.  We thus define polynomials $P_0, P_1, P_2,\ldots,P_{n}$, such that $P_j$ is of order $j$ with $P_j(z_i) = (q_j)_i$ for $i=1,2,\ldots,m$. As a result, the relation (\ref{eq:threeq}) becomes a three-term recurrence for polynomials $\cb{P}_{i=0}^n$ restricted to points $\cb{z_i}_{i=1}^m$. Since the polynomials are defined everywhere, we extend the polynomials and the three-term recurrence to the entire the complex plane. 
We thus obtain polynomials $P_0, P_1, P_2,\ldots,P_{n}$ that satisfy a three-term recurrence relation in (\ref{eq:three_thm}). We summarize this construction in the following lemma.

\begin{lemma}
	\label{lem:ttr_poly}
	Let $z_1, z_2, \ldots, z_m$ and $w_1,w_2,\ldots,w_m$ be two sets of complex numbers. 
We define a diagonal matrix $Z=\mathrm{diag}(z_1, z_2, \ldots, z_m)$, and a complex inner product of the form (\ref{eq:com_prod}) by choosing $w_1,w_2,\ldots,w_m$ as weights. If the complex orthogonalization procedure defined above in Lemma~\ref{lem:lanczos} does not break down, there exist polynomials $P_0, P_1, P_2,\ldots, P_n$ $(n\le m-1)$ such that $P_j$ is of order $j$,  with two vectors $\alpha, \beta\in \C^n$ (See (\ref{eq:3-term})). In particular, the polynomials satisfy a three-term recurrence relation of the form (\ref{eq:three_thm}) in the entire complex plane.
\end{lemma}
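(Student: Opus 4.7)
The plan is to deduce the lemma directly from Lemma~\ref{lem:lanczos} applied to the diagonal matrix $Z$ and the initial vector $b=(1,1,\ldots,1)^T$, and then promote the componentwise three-term recurrence to a three-term recurrence for polynomials by an induction on $j$.

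First I would invoke Lemma~\ref{lem:lanczos}: since $Z$ is diagonal, it is complex symmetric, so provided the procedure does not break down we obtain complex orthonormal vectors $q_0,q_1,\ldots,q_n \in \C^m$ together with coefficient vectors $\alpha,\beta \in \C^n$ satisfying
\begin{equation*}
Z q_j = \beta_j q_{j-1} + \alpha_{j+1} q_j + \beta_{j+1} q_{j+1}, \qquad 0 \le j \le n-1,
\end{equation*}
with $q_{-1}=0$, $\beta_0=0$, and $q_0 = b/[b]_w$. Because $Z=\mathrm{diag}(z_1,\ldots,z_m)$, the $i$-th component of $Zq_j$ is simply $z_i(q_j)_i$, so this recurrence holds componentwise in the form (\ref{eq:threeq}).

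Next, I would construct polynomials $P_0,P_1,\ldots,P_n$ by induction on $j$, insisting that $P_j$ be of degree exactly $j$ and that $P_j(z_i)=(q_j)_i$ for every $i$. The base case is $P_0(z) \equiv 1/[b]_w$, which is a degree-$0$ polynomial whose value at each $z_i$ equals $(q_0)_i$ (since every component of $b$ equals $1$). For the inductive step, assuming $P_{j-1}$ and $P_j$ have been produced with degrees $j-1$ and $j$ respectively, I define
\begin{equation*}
P_{j+1}(z) \;=\; \frac{1}{\beta_{j+1}}\Bigl(\, z\,P_j(z) - \alpha_{j+1}\,P_j(z) - \beta_j\,P_{j-1}(z)\,\Bigr).
\end{equation*}
The right-hand side is well-defined because $\beta_{j+1}\ne 0$ by the non-breakdown hypothesis, and it is a polynomial whose leading term comes from $zP_j(z)/\beta_{j+1}$, so $\deg P_{j+1}=j+1$. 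Evaluating at $z=z_i$ and comparing with (\ref{eq:threeq}) gives $P_{j+1}(z_i) = (q_{j+1})_i$, completing the induction.

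Finally, the defining identity for $P_{j+1}$ rearranges directly into
\begin{equation*}
z\,P_j(z) \;=\; \beta_j P_{j-1}(z) + \alpha_{j+1} P_j(z) + \beta_{j+1} P_{j+1}(z),
\end{equation*}
which is exactly (\ref{eq:three_thm}), valid as an identity of polynomials and hence at every point in $\C$. There is no real obstacle in this argument; the only point requiring care is that $P_{j+1}$ actually attains degree $j+1$, which is where the non-breakdown assumption ($\beta_{j+1}\ne 0$) is used, and that the componentwise identity (\ref{eq:threeq}) is genuinely the same object as the matrix recurrence supplied by Lemma~\ref{lem:lanczos}, which follows immediately from the diagonality of $Z$.
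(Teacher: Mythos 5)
Your proposal is correct and takes essentially the same approach as the paper: invoke Lemma~\ref{lem:lanczos} for the diagonal matrix $Z$, read the resulting three-term recurrence componentwise, and construct the $P_j$ inductively so that $P_j(z_i)=(q_j)_i$. Your version is slightly more explicit than the paper's informal discussion in that you write out the defining formula for $P_{j+1}$ (making the ``extension to all of $\C$'' automatic, since the identity holds as an identity of polynomials by construction), but it is the same argument.
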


\begin{remark}
We observe that the matrix $Z$ in (\ref{eq:Z}) is complex symmetric, i.e. $Z=Z^T$, as opposed to being Hermitian. Thus, if we replace the complex inner product used in the construction in Lemma~\ref{lem:ttr_poly} with the standard inner product (with complex conjugation), it is necessary to orthogonalize $Zq_j$ in (\ref{eq:zq}) against not only $q_{j}$ and $q_{j-1}$ but also all remaining preceding vectors $q_{j-2}, \ldots, q_0$, since the complex symmetry of $Z$ is unable to enforce orthogonality automatically, destroying the three-term recurrence. As a result, the choice of complex inner products is necessary for constructing three-term recurrences. 
\end{remark}

Although the procedure in Lemma~\ref{lem:ttr_poly} provides a formal way of constructing polynomials satisfying three-term recurrences in $\C$, it will not in general produce a practical basis. The sizes of the polynomials tend to grow rapidly as the order increases, even at the points where the polynomials are constructed. Thus, viewed as a basis, they tend to be extremely ill-conditioned. In particular, when the points $\cb{z_i}_{i=1}^m$ are Gaussian nodes of each side of a square (with weights $\cb{w_i}_{i=1}^m$ the corresponding Gaussian weights), the procedure in Lemma~\ref{lem:ttr_poly} breaks down due to the complex norm $[q_1]_w$ of the first vector produced being zero. This problem of constructed bases being ill-conditioned is partially solved by the following observations.

We observe that, if the weights $\cb{w_i}_{i=1}^m$ in defining the complex inner product are chosen to be \emph{random} complex numbers, the growth of the size of polynomials at points of construction $\cb{z_i}_{i=1}^m$ is suppressed -- the polynomials form reasonably well-conditioned bases and the condition number grows almost linearly with the order.
Moreover, this phenomenon is relatively insensitive to locations of points $\cb{z_i}_{i=1}^m$ and distributions from which numbers $\cb{w_i}_{i=1}^m$ are drawn. For simplicity, we choose $\cb{w_i}_{i=1}^m$ to be real random numbers uniformly drawn from $[0,1]$ for all numerical examples in the remaining of this paper. 
Thus, this observation combined with Lemma~\ref{lem:ttr_poly} enables the construction of practical bases satisfying three-term recurrences. Furthermore, when such a basis is constructed based on points $\cb{z_i}_{i=1}^m$ chosen on the boundary $\partial D$ of a compact domain $D\subset \C$, we observe that both the basis and the three-term recurrence extends \emph{stably} to the {entire} domain $D$. It is not completely understood why the choice of random weights leads to well-conditioned polynomial bases and why their extension over a compact domain is stable. Both questions are under vigorous investigation.


\begin{remark}
	For rootfinding purposes in a compact domain $D$, since the three-term recurrences extend stably over the entire $D$ automatically, we only need values of the basis polynomials at points on its boundary $\partial D$. In practice, those points are chosen to be identical to those where the polynomials are constructed (See Lemma~\ref{lem:ttr_poly} above), so their values at points of interests are usually readily available. Their values elsewhere, if needed, can be obtained efficiently by evaluating their three-term recurrences.
\end{remark}



\subsubsection{Condition numbers of $\cb{P_j} $ \label{sec:data}}
In this section, we evaluate numerically condition numbers of polynomial bases constructed via Lemma~\ref{lem:ttr_poly}. Following the observations discussed above, random numbers are used as weights in defining the complex inner product for the construction.
 We construct such polynomials at $m$ nodes $\cb{z_j}_{j=1}^m$ on the boundary of a square, a triangle, a snake-like domain and a circle in the complex plane. Clearly, the domains in the first three examples are polygons. For choosing nodes $\cb{z_j}_{j=1}^m$ on the boundaries of these polygons, we parameterize each edge by the interval $[-1,1]$, by which each edge is discretized with equispaced or Gaussian nodes. We choose the number of discretization points on each edge to be the same (so the total node number $m$ is different depending on the number of edges).  In Figure~\ref{fig:shapes}, discretizations with Gaussian nodes are illustrated for the first three examples, and that with equispaced nodes is shown in the last one.
Plots of polynomials $P_2, P_5$ and $P_{20}$ are shown in Figure~\ref{fig:basis}, and they are constructed with Gaussian nodes on the square shown in Figure~\ref{fig:square} (for a particular realization of random weights). Roots of polynomial $P_{300}$ are also shown in Figure~\ref{fig:basis} for all examples constructed with Gaussian nodes, where most roots of $P_{300}$ are very close to the exterior boundary on which the polynomials are constructed.
 
In the first three examples, to demonstrate the method's insensitivity to the choice of $\cb{z_j}_{j=1}^m$, we construct polynomials at points $\cb{z_j}_{j=1}^m$ formed by both equispaced and Gaussian nodes on each edge.
In the last (circle)  example, the boundary is only discretized with $m$ equispaced nodes. We measure the condition numbers of the polynomial bases by that of the $m\times (n+1)$ basis matrix, denoted by $G$, formed by values of polynomials $\cb{P_j}_{j=0}^n$ at Gaussian nodes $\cb{\tilde{z}_j}_{j=1}^m$ on each edge (not necessarily the same as $\cb{{z}_j}_{j=1}^m$), scaled by the square root of the corresponding Gaussian weights $\cb{\tilde{w}_j}_{j=1}^m$ -- except for the last example, where the nodes $\cb{\tilde{z}_j}_{j=1}^m$ are equispaced, and all weights $\cb{\tilde{w}_j}_{j=1}^m$ are $2\pi/m$. More explicitly, the $(i,j)$ entry of the matrix $G$ is given by the formula
\begin{equation}
	G_{ij} = \sqrt{\tilde{w}_i}P_j(\tilde{z}_i)\,.
	\label{eq:GG}
\end{equation}
It should be observed that the weights $\cb{\tilde{w}_j}_{j=1}^m$ in $G$ are different from those for constructing the polynomials -- the former are good quadrature weights while the latter are random ones.
Condition numbers of the first three examples shown Figure~\ref{fig:sq_plot}, labeled ``Gaussian 1'' and ``Gaussian 2'', are those of the bases constructed at Gaussian nodes on each edge. Thus the construction points coincide with those for condition number evaluations, and the matrix $G$ is readily available. Condition numbers labeled ``Equispaced'' in the first three examples are those of the bases constructed at equispaced nodes on each edge. In these cases, the corresponding three-term recurrences are used to obtain the values of the bases at Gaussian nodes, from which $G$ is obtained. In the last example, the points for basis construction also coincide with those for condition number evaluations, so $G$ is available directly.

Since the weights used in defining complex inner products during construction are random, the condition numbers are averaged over 10 realizations for each basis of order $n$.
It can be seen in Figure~\ref{fig:sq_plot} that the condition number grows almost linearly with the order $n$, an indication of the stability of the constructed bases and the method's domain independence. 
\hl{Although constructing high order polynomial bases ($n\ge 300$) can be easily done, we do not use bases of order larger than $n=100$ in all numerical experiments involving rootfinding in this paper, so the condition numbers of bases for rootfinding are no larger than 1000.}

\begin{figure}[htb]
	\centering

	\begin{subfigure}[t]{0.45\textwidth}
		\centering
		\includegraphics[width=\textwidth]{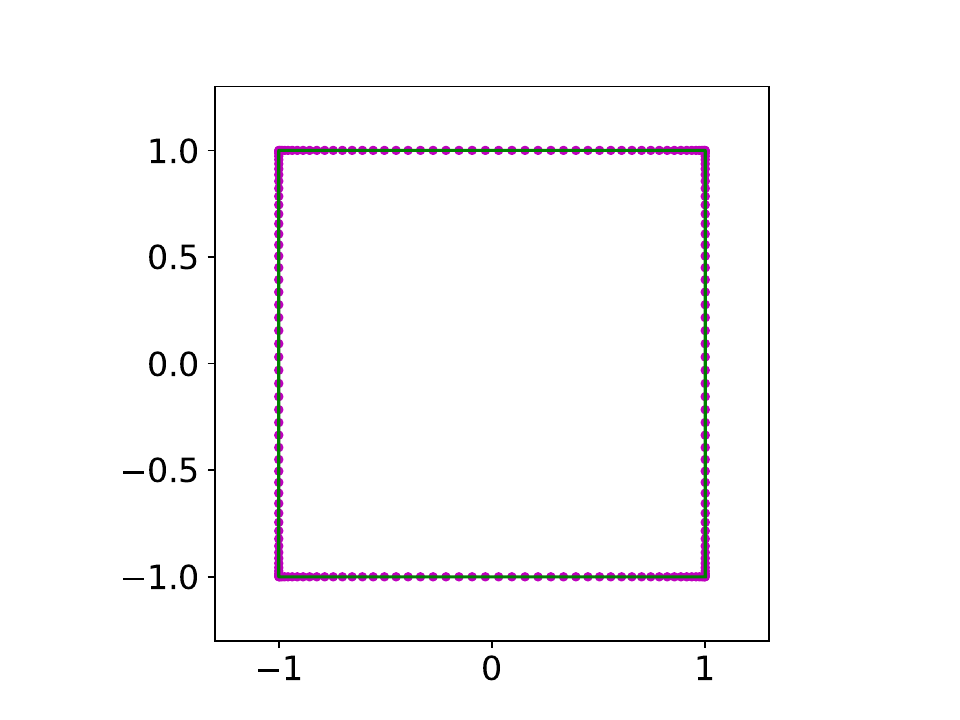}
		\caption{}
		\label{fig:square}
	\end{subfigure}	
	~
		\begin{subfigure}[t]{0.45\textwidth}
		\centering
		\includegraphics[width=\textwidth]{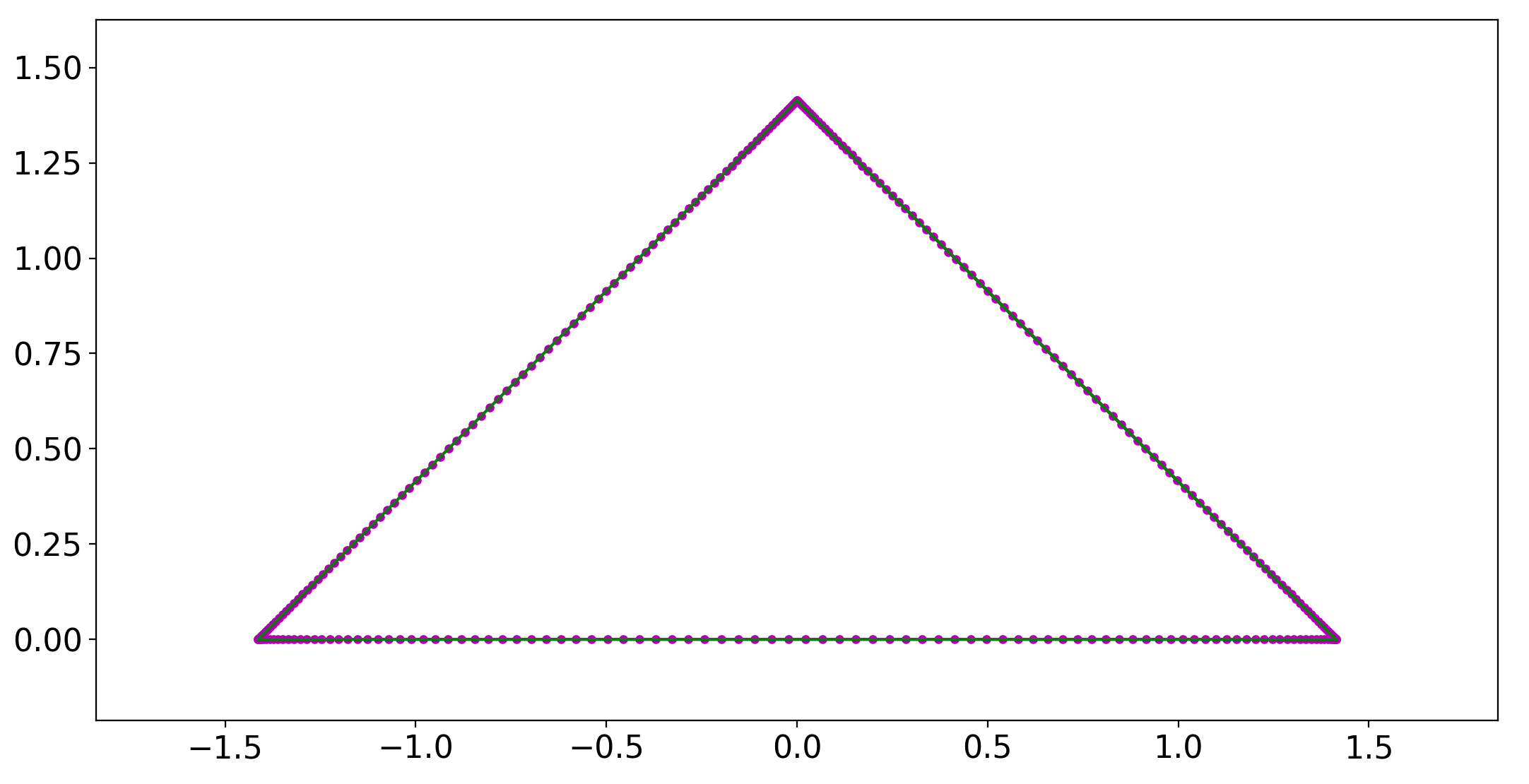}
		\caption{}
		\label{fig:triangle}
	\end{subfigure}	

	\begin{subfigure}[t]{0.5\textwidth}
		\centering
		\includegraphics[width=\textwidth]{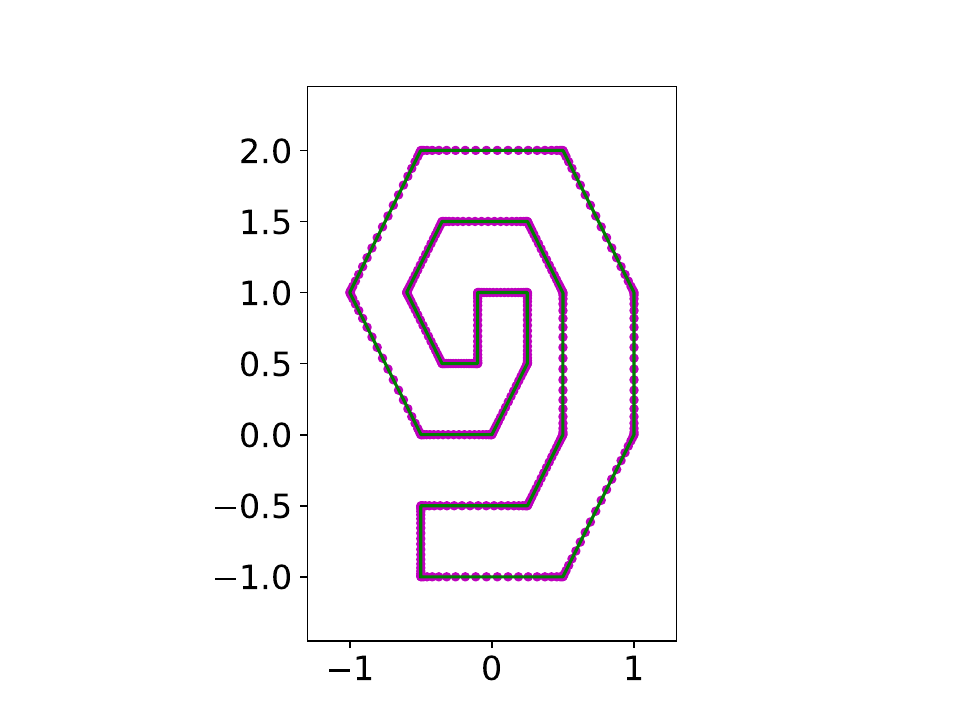}
		\caption{}
		\label{fig:snake}
	\end{subfigure}
	~
		\begin{subfigure}[t]{0.45\textwidth}
		\centering
		\includegraphics[width=\textwidth]{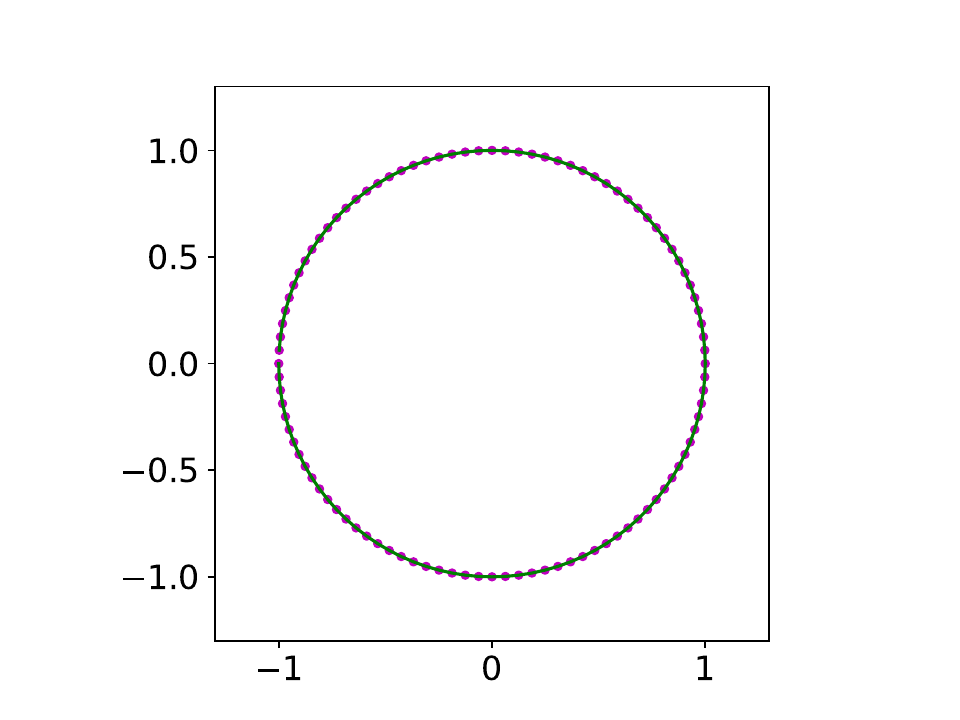}
		\caption{}
		\label{fig:circle}
	\end{subfigure}

	\caption{We construct the polynomial bases on a square, a triangle, a snake-like domain and a circle.
	\label{fig:shapes}}
\end{figure}

\clearpage

\begin{figure}[H]
	\centering

	\begin{subfigure}[t]{0.48\textwidth}
		\centering
		\includegraphics[width=\textwidth]{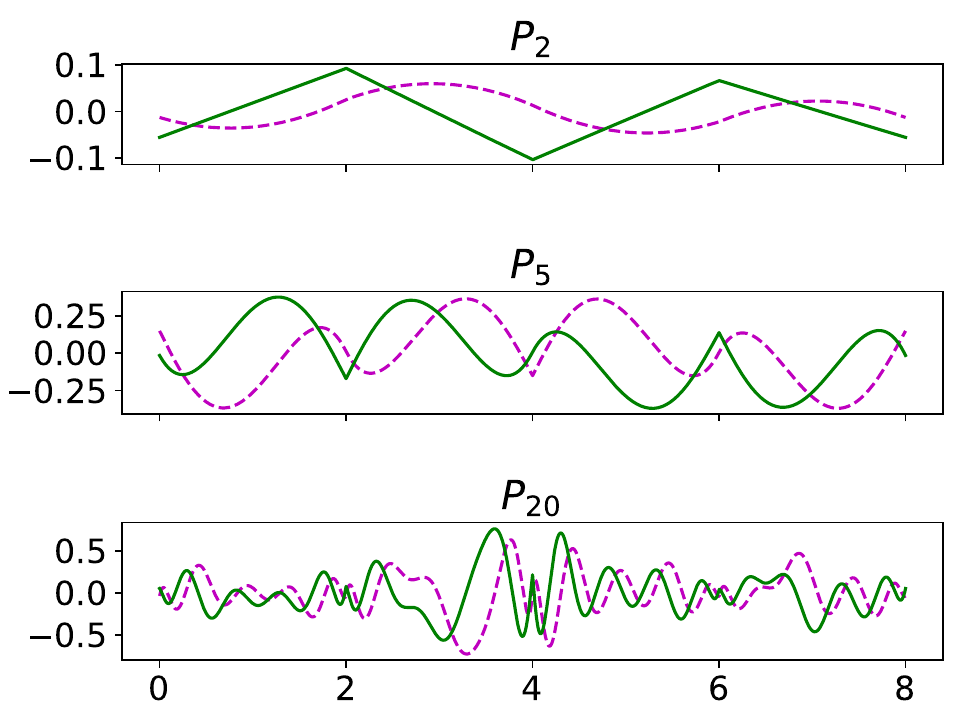}
		\caption{}
		\label{fig:basisplot}
	\end{subfigure}

	\begin{subfigure}[t]{0.48\textwidth}
		\centering
		\includegraphics[width=\textwidth]{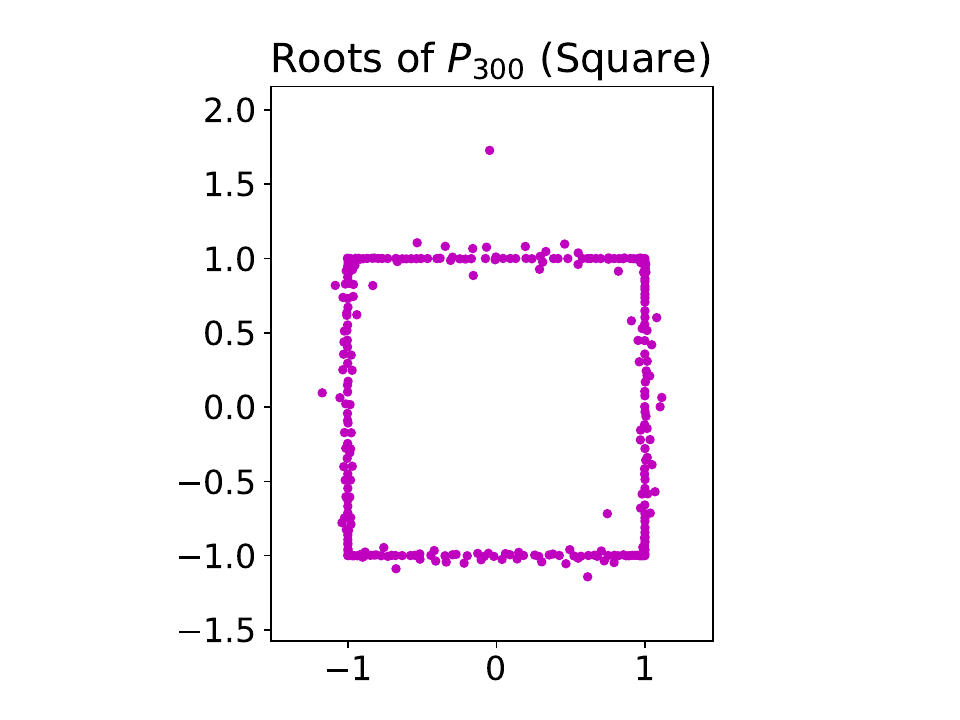}
		\caption{}
		\label{fig:basisroots}
	\end{subfigure}
	\begin{subfigure}[t]{0.48\textwidth}
		\centering
		\includegraphics[width=\textwidth]{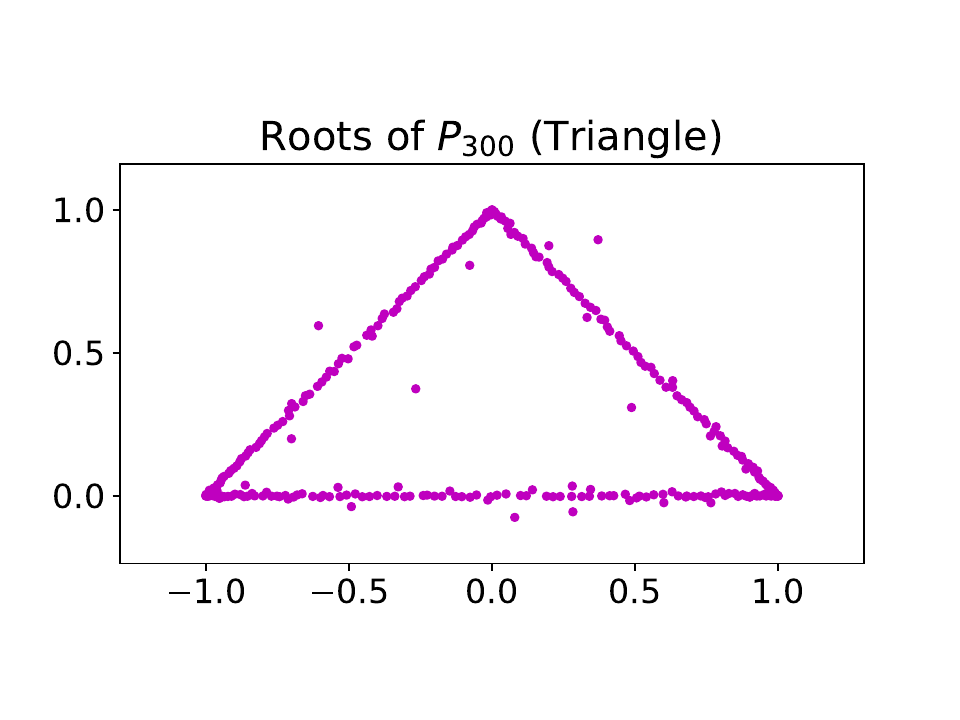}
		\caption{}
		\label{fig:basisroots_tri}
	\end{subfigure}
	\begin{subfigure}[t]{0.48\textwidth}
		\centering
		\includegraphics[width=\textwidth]{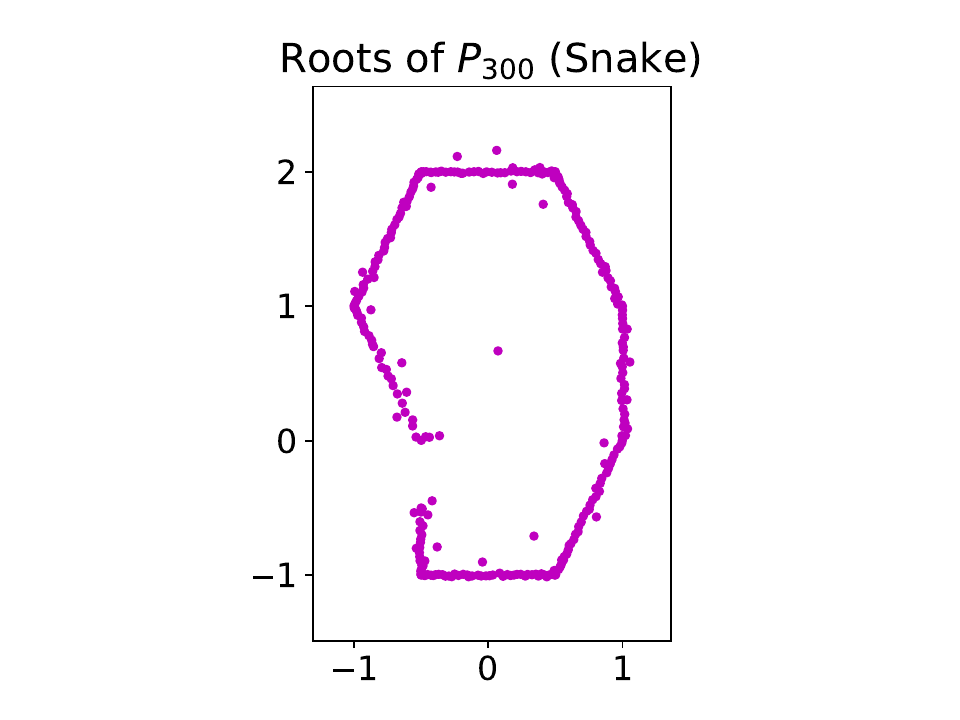}
		\caption{}
		\label{fig:basisroots_snake}
	\end{subfigure}
	\begin{subfigure}[t]{0.48\textwidth}
		\centering
		\includegraphics[width=\textwidth]{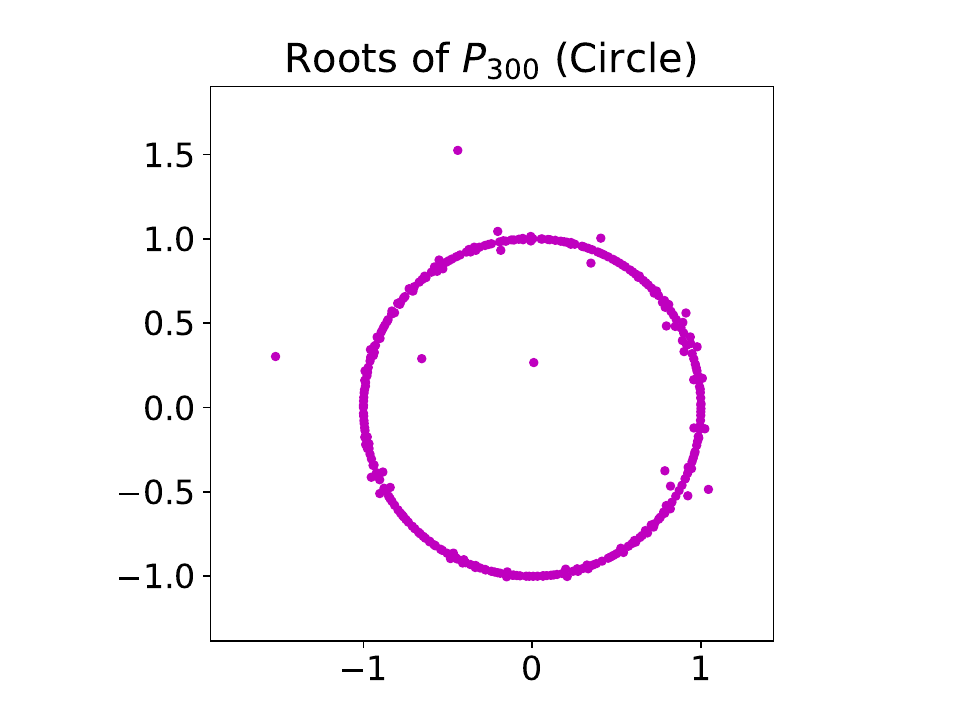}
		\caption{}
		\label{fig:basisroots_circle}
	\end{subfigure}	
	
	\caption{The graphs of the polynomials $P_2, P_5$ and $P_{20}$ are shown in (a), constructed on the boundary of a square domain shown in Figure~\ref{fig:square}. The four sides of the square are mapped to the interval $[0,8]$, on which the polynomials are plotted.
	Solid lines are the real part and dotted lines the imaginary part.
Roots of $P_{300}$ for all examples are shown. All roots tend to be near the exterior boundary of domains.
	\label{fig:basis}}
\end{figure}

\begin{figure}[h]
	\centering

	\begin{subfigure}[t]{0.4\textwidth}
		\centering
		\includegraphics[width=\textwidth]{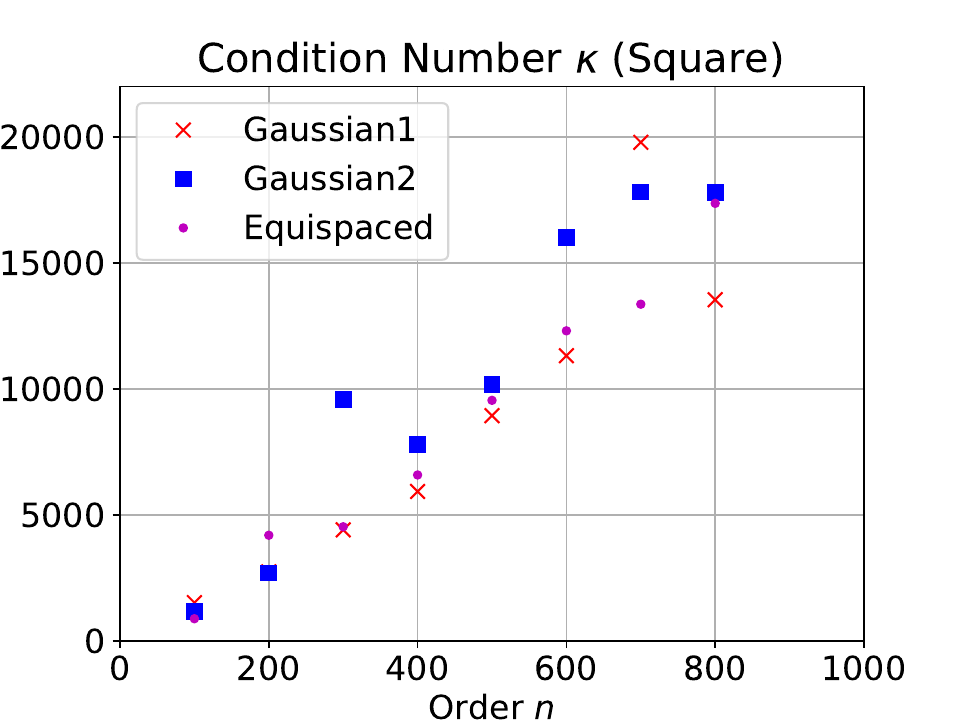}
		\caption{}
		\label{fig:sq_cond}
	\end{subfigure}
	~
	\begin{subfigure}[t]{0.4\textwidth}
		\centering
		\includegraphics[width=\textwidth]{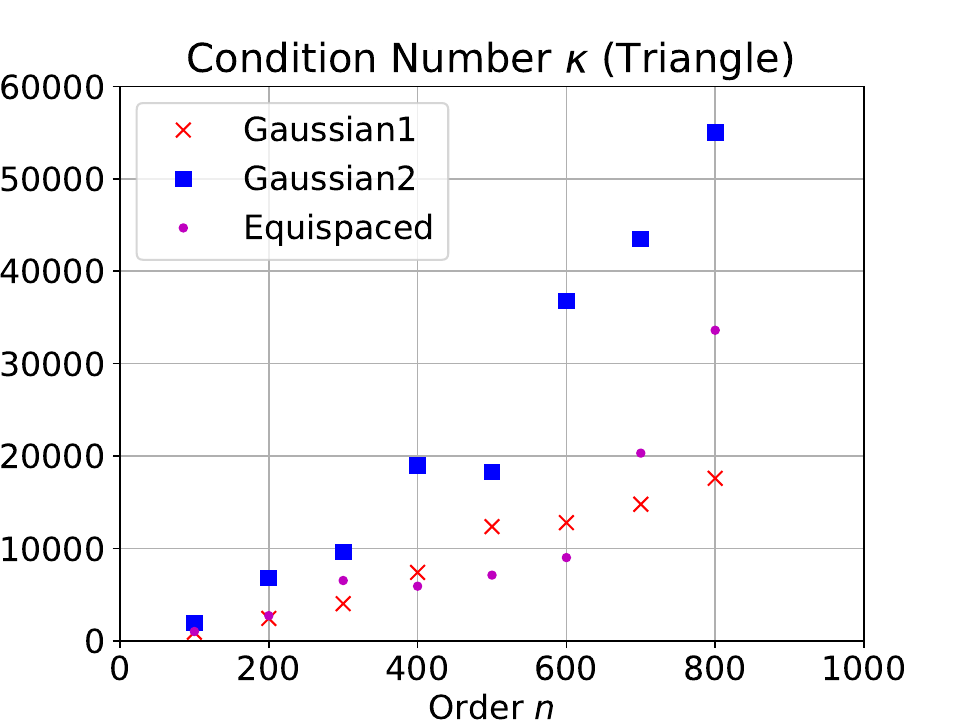}
		\caption{}
		\label{fig:tri_cond}
	\end{subfigure}

	\begin{subfigure}[t]{0.4\textwidth}
		\centering
		\includegraphics[width=\textwidth]{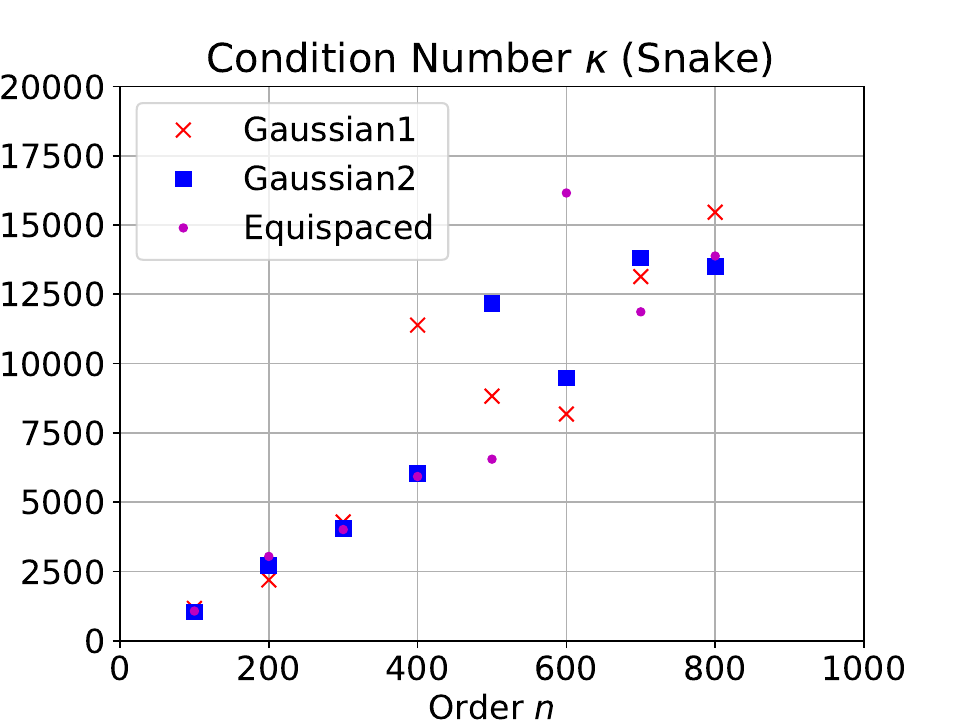}
		\caption{}
		\label{fig:snake_cond}
	\end{subfigure}
	~
	\begin{subfigure}[t]{0.4\textwidth}
		\centering
		\includegraphics[width=\textwidth]{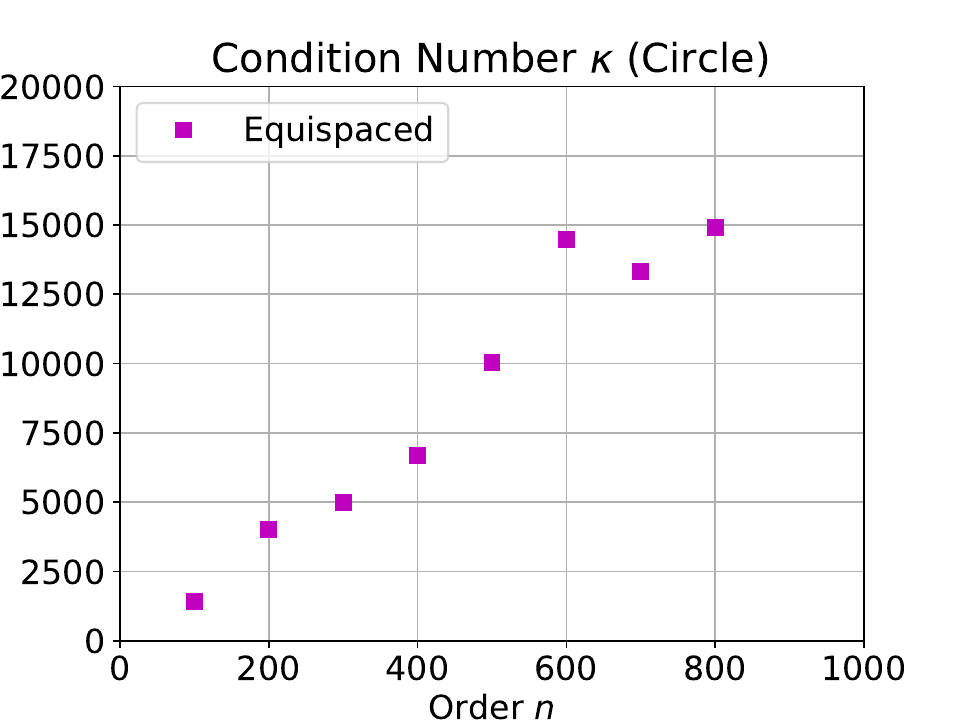}
		\caption{}
		\label{fig:c_cond}
	\end{subfigure}

	\caption{The condition numbers of polynomial bases constructed on the boundary of various domains in Figure~\ref{fig:shapes} are shown. Data points labeled by ``Gaussian 1''  for order $n$ represent condition numbers of $\cb{P_j}_{j=0}^n$ constructed with $n$ Gaussian nodes on each side; ``Gaussian 2'' represents those with $n/2+10$ Gaussian nodes. In both cases, the points where the polynomials are constructed coincide with those where the conditioned numbers of the polynomial bases are evaluated.
	 ``Equispaced'' represents those constructed with $n$ equispaced nodes on each side, except for the triangle example where $4n$ equispaced nodes are used.
	  In the last example, in total $2n$ equispaced nodes are used.
	\label{fig:sq_plot}}
\end{figure}

\newpage

\section{Complex orthogonal QR algorithm \label{sec:QR}}

This section contains the complex orthogonal QR algorithm used in this paper for computing the eigenvalues of the generalized colleague matrix $C$ in the form of (\ref{eq:colleague}). Section~\ref{sec:overview} contains an overview of the algorithm, and  Section~\ref{sec:qr:super} and \ref{sec:qr:sub} contain the detailed description.

The complex orthogonal QR algorithm is a slight modification of Algorithm 4 in \cite{serkh2021}. Similar to the algorithm in \cite{serkh2021}, the QR algorithm here takes $O(n^2)$ operations, and numerical results suggest the algorithm is structured backward stable.
The major difference is that $A$ in \eqref{tridiag} here is complex symmetric, whereas $A$ in \cite{serkh2021} is Hermitian.   In order to maintain the complex symmetry of $A$,  we replace SU(2) rotations used in the Hermitian case
with complex orthogonal transforms in Lemma~\ref{lem:rot}. (A discussion of similarity transforms via complex orthogonal matrices can be found in \cite{gantmakher2000theory}.) 
Except for this change,  the algorithm is very similar to its Hermitian counterpart. Due to the use of complex orthogonal matrices, the proof of backward stability in \cite{serkh2021} cannot be extended to this paper directly. The numerical stability of our QR algorithm is still under investigation. Thus here we describe the algorithm in exact arithmetic, except for the section describing an essential correction to stabilize large rounding error when the rank-1 update has large norm. 

\subsection{Overview of the complex QR \label{sec:overview}}
The complex orthogonal QR algorithm we use is designed for a class of lower Hessenberg matrices $\mathcal{F}\subset \C^{n\times n}$ of the form
\begin{equation}
	A+ p q^T\,,
	\label{eq:A}
\end{equation}
where $A\in \C^{n\times n }$ is complex symmetric and $p,  q\in \C^{n\times n}$.  Due to Lemma~\ref{lem:tridiag}, the matrix $A$ is determined entirely by:
\begin{enumerate}
	\item The diagonal entries $d_i = a_{i,i}$,  for $i=1,2,\ldots,n$,
	\item The superdiagonal entries $\beta_{i}=a_{i,  i+1}$ for $i=1,2,\ldots,n-1$,
	\item The vectors $p$, $q$\,.
\end{enumerate} 
Following the terminology in \cite{serkh2021},  we refer to the four vectors $d,\beta,p$ and $q$ as the \emph{basic elements} or \emph{generators} of $A$.  Below, we introduce a complex symmetric QR algorithm applied to a lower Hessenberg matrix of the form $C=A+pq^T$, defined by its generators $d,\beta,p$ and $q$. In a slight abuse of terminology, we call complex orthogonal transforms ``rotations'' as in \cite{serkh2021}. However, these complex orthogonal transforms are not rotations in general.

First, we eliminate the superdiagonal of the lower Hessenberg $C$. Let the matrix $U_n\in 
\C^{n\times n}$ be the complex orthogonal matrix (i.e. $U_n^T=U_n^{-1}$) that rotates the $(n-1,n)$-plane so that the superdiagonal in the  $(n-1,n)$ entry is eliminated:
\begin{equation}
	(U_n C)_{n-1,n} =0\,.
\end{equation}
We obtain the matrix $U_n$ from an $n\times n$ identity matrix, replacing its $(n-1,n)$-plane block with the $2\times 2$ complex orthogonal $Q_n$ computed via Lemma~\ref{lem:rot} for eliminating $C_{n-1,n}$ in the vector $(C_{n-1,n},C_{n,n})^T$.
Similarly, by the replacing $(n-2,n-1)$-plane block of an $n\times n$ identity matrix by the corresponding $Q_{n-1}$,
we form the complex orthogonal matrix $U_{n-1}\in\C^{n\times n}$ that rotates the $(n-2,n-1)$-plane to eliminate the superdiagonal in the $(n-2,  n-1)$ entry:
\begin{equation}
	(U_{n-1}U_{n}C)_{n-2,n-1} = 0.
\end{equation}
 We can repeat this process and use complex orthogonal matrices $U_{n-2},U_{n-3},\ldots,U_2$ to eliminate the superdiagonal entries in the $(n-3,n-2),(n-4,n-3),\ldots,(1,2)$ entry of the matrices $(U_{n-1} U_{n} C), (U_{n-2}U_{n-1}U C),\ldots, (U_3 \cdots U_{n-1} U_{n}C)$,  respectively.  We will denote the product $U_2U_3\cdots U_n$ of complex orthogonal transforms by $U$:
\begin{equation}
	U \eqdef U_2U_3\cdots U_n\,.
	\label{eq:Udef}
\end{equation}
Obviously, the matrix $U$ is also complex orthogonal (i.e. $U^T=U^{-1}$).
As all superdiagonal elements of $C$ are eliminated by $U$, the matrix $UC$ is lower triangular, and is given by the formula
\begin{equation}
	UC=UA+(Up)q^T.
	\label{eq:toB}
\end{equation}
We denote the matrix $UA$ by $B$, and the vector $Up$ by $\underline{p}$:
\begin{equation}
	B \eqdef UA\,, \quad \underline{p} \eqdef  Up\,.
\end{equation} 
Due to Lemma~\ref{lem:triang}, the upper Hessenberg part of the matrix $B$ is determined entirely by:
\begin{enumerate}
	\item The diagonal entries $\underline{d}_i=b_{i,i}$,  for $i=1,2,\ldots,n$,
	\item The subdiagonal entries $\underline{\gamma}_i=b_{i+1,i}$,  for $i=1,2,\ldots, n-1$,
	\item The vectors $\underline{p}, q$.
\end{enumerate}
Similarly,  the four vectors $\underline{d}, \underline{\gamma}, \underline{p}$ and $q$ are the generators of (the upper Hessenberg part of) $B$.  

Next,  we multiply $UC$ by $U^T$ on the right to bring the lower triangular $UC$ back to a lower Hessenberg form. Thus we obtain $UCU^T$ via the formula
\begin{equation}
	UCU^T = UAU^T + Up (Uq)^T\,.
	\label{eq:UCUT}
\end{equation}
We denote the matrix $UAU^T$ by $\underline{A}$, and the vector $Uq$ by $\underline{q}$:
\begin{equation}
	\underline{A}=BU^T \eqdef UAU^T, \quad \underline{q}\eqdef Uq .
	\label{eq:toAbar}
\end{equation}
This completes one iteration of the complex orthogonal QR algorithm.
Clearly, the matrix $UCU^T$ is similar to $C$ as $U$ is complex orthogonal, so they have the same eigenvalues. 

It should be observed that the matrix $UCU^T = \underline{A} + \underline{p}\underline{q}^T$ in (\ref{eq:UCUT}) is still a sum of a complex symmetric matrix and a rank-1 update. Moreover, multiplying $U^T$ on the right of the lower triangular matrix $B$ brings it back to the lower Hessenberg form, since $U^T$ is the product of $U_{n}^T, U_{n-1}^T, \ldots, U_{2}^T$ (See (\ref{eq:Udef}) above), where $U^T_k$ only rotates column $k$ and column $k-1$ of $B$.
As a result, the matrix $UCU^T$ is lower Hessenberg, thus still in the class $\mathcal{F}$ in (\ref{eq:A}). Again, due to Lemma~\ref{lem:tridiag}, the matrix $\underline{A}$ can be represented by its four generators.

We will see, in the next two sections, that the generators of $B$ can be obtained directly from generators of $A$, and the generators of $\underline{A}$ can be obtained from those of $B$.  Consequently, it suffices to only rotate generators of $A$ from (\ref{eq:A}) to (\ref{eq:toB}), followed by only rotating generators of $B$ to go from (\ref{eq:toB}) to (\ref{eq:toAbar}). Thus, each QR iteration only costs $O(n)$ operations. Moreover, since $UCU^T$ remains in the class $\mathcal{F}$, the process can be iterated to find all $n$ eigenvalues of the matrix $A$ in $O(n^2)$ operations, provided all complex orthogonal transforms $Q_k$ are defined (See Lemma~\ref{lem:rot}). The next two sections contain the details of the complex orthogonal QR algorithm outlined above.

\subsection{Eliminating the superdiagonal \label{sec:qr:super}(Algorithm~\ref{alg:elim})}  




In this section, we describe how the QR algorithm eliminates all superdiagonal elements of the matrix $A+pq^T$,  proceeding from (\ref{eq:A}) to (\ref{eq:toB}).  Suppose we have already eliminated the superdiagonal elements in the positions $(n-1,n),  (n-2,n-1),\ldots,(k,k+1)$.  We denote the vector $U_{k+1}U_{k+2}\cdots U_n p$ and the matrix $U_{k+1}U_{k+2}\cdots U_n A$, for $k=1,2,\ldots,n-1$, by $p^{(k+1)}$  and $B^{(k+1)}$ respectively:
\begin{equation}
	p^{(k+1)}\eqdef U_{k+1}U_{k+2}\cdots U_n p \quad \mathrm{and}\quad \quad B^{(k+1)}\eqdef U_{k+1}U_{k+2}\cdots U_n A\,.
	\label{eq:pBnote}
\end{equation}
For convenience, we define $p^{(n+1)}=p$ and $B^{(n+1)}=A$. Thus the matrix we are working with is given by
\begin{equation}
	B^{(k+1)} + p^{(k+1)}q^T\,.
	\label{eq:Bk1}
\end{equation}
Since we have rotated a part of the superdiagonal elements of $A$ in (\ref{eq:A}) to produce part of the subdiagonal elements of $B$ in (\ref{eq:toB}),
the upper Hessenberg part of $B^{(k+1)}$ is represented by the generators
\begin{enumerate}
	\item The diagonal elements $d_i^{(k+1)}=b^{(k+1)}_{i,i}$,  for $i=1,2,\ldots,n$,
	\item The superdiagonal elements $\beta^{(k+1)}_i =  b^{(k+1)}_{i,i+1}$,  for $i=1,2,\ldots,k-1$,
	\item The subdiagonal elements $\gamma^{(k+1)}_i = b^{(k+1)}_{i+1,i}$,  for $i=1,2,\ldots,n-1$,
	\item The vectors $p^{(k+1)}$ and $q$ from which the remaining elements in the upper Hessenberg part are inferred.
\end{enumerate}

\begin{figure}
  \renewcommand*{\arraystretch}{2.0}
\centering
\begin{minipage}{0.5\textwidth}
\begin{align*}
&\hspace*{-4em}
\left(
  \begin{array}{ccccccc}
\ddots &  \gamma_{k-3}^{(k+1)} &  d_{k-2}^{\;(k+1)} & 
  \beta_{k-2}^{\,(k+1)} & - p_{k-2}^{\,(k+1)} q_k & -
  p_{k-2}^{\,(k+1)} q_{k+1} & \cdots \\
\hline
\cdots & \times & \gamma_{k-2}^{(k+1)} &  d_{k-1}^{\;(k+1)} &
  \beta_{k-1}^{\,(k+1)} & - p_{k-1}^{\,(k+1)} q_{k+1} & \cdots \\
\cdots & \times &  b_{k,k-2}^{\,(k+1)} & \hat \gamma_{k-1}^{(k+1)} & 
  d_k^{\;(k+1)} & - p_k^{\,(k+1)} q_{k+1} & \cdots \\
\hline
\cdots & \times & \times & \times &  \gamma_{k}^{(k+1)} &
  d_{k+1}^{\;(k+1)} & \ddots
  \end{array}
\right)
\end{align*}
\end{minipage}
  \caption{The $(k-1)$-th and $k$-th rows of $B^{(k+1)}$, represented by
  its generators.  \label{fig:bhat}}
\end{figure}

First we compute the complex orthogonal  matrix $Q_k$ via Lemma~\ref{lem:rot} that eliminates the superdiagonal element in the $(k-1,k)$ position of the matrix (\ref{eq:Bk1}), given by the formula,
\begin{equation}
B^{(k+1)}_{k-1,k}+ (p^{(k+1)}q^T)_{k-1,k} = \beta^{(k+1)}_{k-1} + p^{(k+1)}_{k-1}q_k,	
\label{eq:superelim}
\end{equation}
by rotating row $k$ and row $k-1$ (See Figure~\ref{fig:bhat}).
Next,  we apply the complex orthogonal matrix $Q_k$ separately to the generators of $B^{(k+1)}$ and to the vector $p^{(k+1)}$.  Since we are only interested in computing the upper Hessenberg part of $B^{(k)}$ (See Lemma~\ref{lem:triang}),  we only need to update the diagonal element $d^{(k+1)}_k,  d^{(k+1)}_{k-1}$ in the $(k,k)$ and $(k-1,k-1)$ position and the subdiagonal element $\gamma^{(k+1)}_{k-1}$ and $\gamma^{(k+1)}_{k-2}$ in the $(k,k-1)$ and $(k-1,k-2)$ position. They are straightforward to update except for $\gamma^{(k+1)}_{k-2}$ since updating it requires the sub-subdiagonal element $b^{(k+1)}_{k,k-2}$ in the $(k,k-2)$ plane. This element can be inferred via the following observation.
 

Suppose we multiply the matrix (\ref{eq:Bk1}) by the rotations $U_n^T, U_{n-1}^T,  \cdots,  U^T_{k+1}$ from the right, and the result is given by the formula
\begin{equation}
	B^{(k+1)}U_n^T U_{n-1}^T \cdots U^T_{k+1} + p^{(k+1)}(U_{k+1}U_{k+2}\cdots U_n q)^T.  
	\label{eq:middle}
\end{equation}   
Since multiplying $U^T_{j}$ from the right only rotates column $j$ and $j-1$ of a matrix,  applying $U_n^T, U_{n-1}^T,  \cdots,  U^T_{k+1}$ rotates the matrix  (\ref{eq:Bk1}) back to a lower Hessenberg one.  For the same reason,  the desired element $b^{(k+1)}_{k,k-2}$ is not affected by the sequence of transforms $U_n^T U_{n-1}^T \cdots U^T_{k+1}$. In other words, $b^{(k+1)}_{k,k-2}$ is also the $(k,k-2)$ entry of $B^{(k+1)}U_n^T U_{n-1}^T \cdots U^T_{k+1}$:
\begin{equation}
	 \bb{B^{(k+1)}U_n^T U_{n-1}^T \cdots U^T_{k+1}}_{k,k-2} = b^{(k+1)}_{k,k-2}\,.
	\label{eq:ssinfer}
\end{equation}
Moreover, the matrix $B^{(k+1)} U_n^T U_{n-1}^T \cdots U^T_{k+1}$ is complex symmetric since $B^{(k+1)} = U_{k+1} U_{k+2}\cdots U_n A$ (See (\ref{eq:pBnote} above)) and $A$ is complex symmetric.   As a result,  
the matrix in (\ref{eq:middle}) is a complex symmetric matrix with a rank-1 update.  Due to Lemma~\ref{lem:tridiag},  the sub-subdiagonal element in (\ref{eq:ssinfer}) of the complex symmetric part can be inferred from the $(k-2,k)$ entry of the rank-1 part $p^{(k+1)}(U_{k+1}U_{k+2}\cdots U_n q)^T$. This observation shows that it is necessary to compute the following vector, denoted by $\tilde{q}^{(k+1)} $:
\begin{equation}
	\tilde{q}^{(k+1)} \eqdef U_{k+1}U_{k+2}\cdots U_n q.
	\label{eq:qt}
\end{equation}  
Then the sub-subdiagonal $b^{(k+1)}_{k,k-2}$ is computed by the formula
\begin{equation}
	b^{(k+1)}_{k,k-2} = -\tilde{q}^{(k+1)}_k p^{(k+1)}_{k-2}\,.
	\label{eq:infer}
\end{equation}
(See Line~\ref{alg:elim:qtil} of Algorithm~\ref{alg:elim}.)

After obtaining $b^{(k+1)}_{k,k-2}$,  we update the remaining generators affected by the elimination of the superdiagonal (\ref{eq:superelim}). 
The element in the $(k-1,k-2)$ position of $B^{(k+1)}$, represented by $\gamma^{(k+1)}_{k-2}$,  is updated in Line~\ref{alg:elim:subxsubsub} of Algorithm~\ref{alg:elim}.   Next,  the elements in the $(k-1,k-1)$ and $(k,k-1)$ positions of $B^{(k+1)}$,  represented by $d_{k-1}^{(k-1)}$ and $\gamma_{k-1}^{(k+1)}$ respectively,  are updated in a straightforward way in Line~\ref{alg:elim:diagxsub}.  Finally,  the elements in the $(k-1,k)$ and $(k,k)$ positions of $B^{(k+1)}$,  represented by $\beta_{k-1}^{(k+1)}$ and $d_k^{(k+1)}$ respectively,  are updated in Line~\ref{alg:elim:supxdiag},  and the vector $p^{(k+1)}$ is rotated in Line \ref{alg:elim:p}.

\subsubsection{Correction for large $\norm{pq^T}$ \label{sec:qr:correction}}
As discussed in Remark~\ref{rmk:large}, the size of the vector $q$ in generalized colleague matrices 
can be orders of magnitude larger than those in the complex symmetric part $A$. These elements are ``mixed'' when the superdiagonal is eliminated in Section~\ref{sec:qr:super}. When it is done in finite-precision arithmetic, large rounding errors due to the large size of $q$ can occur -- they can be as large as the size of elements in $A$, completely destroying the precision of eigenvalues to be computed. Fortunately, the representation via generators permits rounding errors to be corrected even when large $q$ is present. The corrections are as follows.

In Section~\ref{sec:qr:super}, we described in some detail the process of elimination of the $(k-1,k)$ entry in (\ref{eq:superelim}) of the matrix $B^{(k+1)} +p^{(k+1)}q^T$ in (\ref{eq:Bk1}) by the matrix $Q_k$.  
After rotating all the generators, we update $\beta^{(k+1)}_{k-1}$ and $p^{(k+1)}_{k-1}$ by $\beta^{(k)}_{k-1}$ and $p^{(k)}_{k-1}$ respectively. 
Thus, in exact arithmetic, the updated $(k-1,k)$ entry becomes zero. More explicitly, we have
\begin{equation}
	B^{(k)}_{k-1,k} +(p^{(k)}q^T)_{k-1,k}  = \beta^{(k)}_{k-1} + p^{(k)}_{k-1}q_k = 0\,.
	\label{eq:zero}
\end{equation}
Thus we in principle only need to keep $p^{(k)}_{k-1}$ to infer $\beta^{(k)}_{k-1}$. (This is why $\beta^{(k)}_{k-1}$ is not part of the generators, as can be seen from the representation of $B^{(k+1)}_{k-1,k}$ below (\ref{eq:Bk1}).)
However, when rounding error are considered and elements in $q$ are large, inferring $\beta^{(k)}_{k-1}$ from the computed value $\hat{p}^{(k)}_{k-1}q_k$ of $p^{(k)}_{k-1}q_k$ can lead to errors much greater than the directly computed value $\hat{\beta}^{(k)}_{k-1}$. This can be understood by first observing that $\beta^{(k)}_{k-1}$ and $p^{(k)}_{k-1}q_k$ are obtained by applying $Q_k$ to the vectors
\begin{equation}
	(\beta^{(k+1)}_{k-1},d^{(k+1)}_{k})^T,\quad (p^{(k+1)}_{k-1}q_k, p^{(k+1)}_{k}q_k)^T
\end{equation}
respectively. Furthermore, the rounding errors are of the size the machine epsilon $u$ times the norms of these vectors. 
As a result, the computed errors in $\hat{\beta}^{(k)}_{k-1}$ and $p^{(k)}_{k-1}q_k$ are of the order 
\begin{equation}
	\sqrt{|\beta^{(k+1)}_{k-1}|^2 + |d^{(k+1)}_{k}|^2}\norm{Q_k}u,\quad \sqrt{|p^{(k+1)}_{k-1}q_k|^2 + | p^{(k+1)}_{k}q_k|^2}\norm{Q_k}u
\end{equation}
respectively, where $\norm{Q_k}$ is the size of the complex orthogonal transform. (The norm $\norm{Q_k}$ is 1 if it is unitary, i.e. a true rotation.)
We observe that inferring $\hat{\beta}^{(k)}_{k-1}$ from $\hat{p}^{(k)}_{k-1}q_k$ leads to larger computed error when
\begin{equation}
	 |{p}^{(k+1)}_{k-1}q_k|^2 + | {p}^{(k+1)}_{k}q_k|^2 > |{\beta}^{(k+1)}_{k-1}|^2 + |{d}^{(k+1)}_{k}|^2\,,
\end{equation}
so we apply a correction based on (\ref{eq:zero}) by setting the computed $\hat{p}_{k-1}^{(k)}$ to be
\begin{equation}
	\hat{p}_{k-1}^{(k)} = -\hat{\beta}^{(k)}_{k-1}/q_{k}.
	\label{eq:correction}
\end{equation}
(See Line~\ref{alg:elim:corr} of Algorithm \ref{alg:elim}.) On the other hand, when
\begin{equation}
	|{p}^{(k+1)}_{k-1}q_k|^2 + | {p}^{(k+1)}_{k}q_k|^2 \le |{\beta}^{(k+1)}_{k-1}|^2 + |{d}^{(k+1)}_{k}|^2,
\end{equation}
the correction in (\ref{eq:correction}) is not necessary since the error in $\hat{p}_{k-1}^{(k)}q_{k}$ will be smaller than the error in $\hat{\beta}^{(k)}_{k-1}$. The correction described above is essential in achieving structured backward stability for the QR algorithm with SU(2) rotations in \cite{serkh2021} for classical colleague matrices. We refer readers to \cite{serkh2021} for a detailed discussion on the correction's impact on the stability of the algorithm. 

This process of eliminating the superdiagonal elements can be repeated, until the upper Hessenberg part of the matrix $B=U_2 U_3 \cdots U_n A$ is obtained, together with the vector $\underline{p}=U_2 U_3 \cdots U_n p$.    



\subsection{Rotating back to Hessenberg form  \label{sec:qr:sub} (Algorithm~\ref{alg:rotback})}
 In this section, we describe how our QR algorithm rotates the triangular matrix $UC$ in \eqref{toB} back to lower Hessenberg form $UC U^T$ in \eqref{toAbar}.  Given the matrix $B$ and vectors $\underline{p}$ and $q$ from the preceding section, the sum $B+\underline{p}q^T$ is lower triangular.  
Suppose that we have already applied the rotation matrix $U_n^T,  U^T_{n-1},  \cdots,  U^T_{k+1}$ to right of $B$ and $q^T$. We denote the vector $U_{k+1}U_{k+2}\cdots U_n q$ by $q^{(k+1)}$ and the matrix $BU^T_n U^T_{n-1}\cdots U^T_{k+1}$ by $A^{(k+1)}$:
\begin{equation}
	q^{(k+1)} \eqdef U_{k+1}U_{k+2}\cdots U_n q, \quad \mathrm{and}\quad A^{(k+1)} \eqdef BU^T_n U^T_{n-1}\cdots U^T_{k+1}.
\end{equation}
For convenience, we define $q^{(n+1)}=q $ and $A^{(n+1)}=B$.  Immediately after applying $U^T_{k+1}$,
we have rotated part of the subdiagonal element of $B$ in \eqref{toB} to produce part of the superdiagonal elements of $\underline{A}$ in \eqref{toAbar},  
so the upper Hessenberg part of  $A^{(k+1)}$ is represented by the generators
\begin{enumerate}
	\item  The  diagonal entries $d^{(k+1)} = a_{i,i}^{(k+1)}$,  for $i=1,2,\ldots,n$,
	\item The superdiagonal entries $\beta^{(k+1)}_{i} = a_{i,i+1}^{(k+1)}$ for $i=k,k+1,\ldots,  n-1$,
    \item The subdiagonal entries $\gamma^{(k+1)}_{i} = a_{i+1,i}^{(k+1)}$ for $i=1,2,\ldots, k-1$,
	\item The vectors $\underline{p}$ and $q^{(k+1)}$,  from which the remaining elements in the upper triangular part are inferred.
\end{enumerate}

\begin{figure}
  \renewcommand*{\arraystretch}{1.7}
\centering
\begin{minipage}{0.5\textwidth}
\begin{align*}
&\hspace*{-4em}
\left(
  \begin{array}{c|cc|c}
\ddots & \vdots & \vdots & \vdots \\
 d_{k-2}^{\;(k+1)} & -p_{k-2} q_{k-1}^{\,(k+1)} &
  -p_{k-2} q_{k}^{\,(k+1)} & -p_{k-2}  q_{k+1}^{\,(k+1)} \\
\gamma_{k-2}^{(k+1)}  &  d_{k-1}^{\;(k+1)} & -p_{k-1}  q_{k}^{\,(k+1)} &
  -p_{k-1}  q_{k+1}^{\,(k+1)} \\
\times & \gamma_{k-1}^{(k+1)} &  d_{k}^{\;(k+1)}  &  \beta_{k}^{\,(k+1)}  \\
\times & \times & \times &  d_{k+1}^{\;(k+1)}  \\
\vdots & \vdots & \vdots & \ddots 
  \end{array}
\right)
\end{align*}
\end{minipage}
  \caption{The $(k-1)$-th and $k$-th columns of $ A^{(k+1)}$,
  represented by its generators.  \label{fig:ahat}}
\end{figure}

To multiply the matrix $A^{(k+1)} + \underline{p}(q^{(k+1)})^T$ by $U_k^T$ from the right,  we apply the complex orthogonal rotation matrix $Q_k^T$ separately to the generators of $A^{(k+1)}$ and to the vector $q^{(k+1)}$.
We start by rotating the diagonal and superdiagonal elements in the $(k-1,k-1)$ and $(k-1,k)$ positions of $A^{(k+1)}$,  represented by $d_{k-1}^{(k+1)}$ and $-\underline{p}_{k-1}q_k^{(k+1)}$ respectively,  in Line~\ref{alg:rotback:diagsup} of Algorithm \ref{alg:rotback}, saving the superdiagonal element in $\beta_{k-1}^{(k)}$ (See 
Figure~\ref{fig:ahat}).  
Next, we rotate the elements in the $(k,k-1)$ and $(k,k)$ positions,  represented by $\gamma^{(k+1)}_{k-1}$
and $d_k^{(k+1)}$ respectively,  in a straightforward way in Line~\ref{alg:rotback:subdiag}; since we are only interested in computing the upper triangular part of $A^{(k)}$,  we only update the diagonal entry.  Finally,  we rotate the vector $q^{(k+1)}$ in Line~\ref{alg:rotback:q}. This process of applying the complex orthogonal transforms is repeated until the matrix $\underline{A}$ and the vector $\underline{q}$ in (\ref{eq:toAbar}) are obtained.

\subsection{The QR algorithm with explicit shifts (Algorithm \ref{alg:qrshift})}

The elimination of the superdiagonal described in Section~\ref{sec:qr:super} (Algorithm \ref{alg:elim}),  followed by transforming back to Hessenberg form described in Section~\ref{sec:qr:sub} (Algorithm \ref{alg:rotback}),  makes up a single iteration in the complex orthogonal QR algorithm for finding eigenvalues of $A+pq^T$.  In principle, it can be iterated to find all eigenvalues. However,  unlike the algorithms in \cite{serkh2021},  complex orthogonal rotations in our algorithm can get large occasionally. (A typical size distribution of the rotations  can be found in Section~\ref{sec:ex_poly}.) When no shift is applied in QR iterations,  the linear convergence rate of eigenvalues will generally require  an unreasonable number of iterations to reach a acceptable precision. Although large size rotations are rare, they do sometimes accumulate to sizable numerical error when the iteration number is large. This can cause breakdowns of the complex orthogonal QR algorithm, and such breakdowns have been observed in numerical experiments. As a result, the introduction of shifts is essential in order to accelerate convergence, reducing the iteration number and avoiding breakdowns. In this paper, we only provide the QR algorithm with explicit Wilkinson shifts in Algorithm \ref{alg:qrshift}. No breakdowns of Algorithm \ref{alg:qrshift} have been observed once the shifts are introduced; the analysis of its numerical stability, however, is still under vigorous investigation.  All eigenvalues in this paper are found by this shifted version of complex orthogonal QR.

\section{Description of rootfinding algorithms \label{sec:algorithm}}
In this section,  we describe algorithms for finding all roots of a given complex analytic function over a square domain. Section~\ref{sec:precom} contains the precomputation of a polynomial basis satisfying a three-term recurrence on a square domain (Algorithm~\ref{alg:precom}).
Section~\ref{sec:nonadap} contains the non-adaptive rootfinding algorithm (Algorithm~\ref{alg:nadap_root}) on square domains, followed by the adaptive version (Algorithm~\ref{alg:adap_root}) in Section~\ref{sec:adap}. 

The non-adaptive rootfinding algorithm's inputs are a square domain $D\subset\C$, specified by its center $z_0$ and side length $2l$, and a function $f$ analytic in $D$,
whose roots in $D$ are to be found, and two accuracies $\epsilon_{\rm exp}$ and $\epsilon_{\rm eig}$. The first $\epsilon_{\rm exp}$ controls the approximation accuracy of the input function $f$ by polynomials on the boundary $\partial D$; the second $\epsilon_{\rm eig}$ specifies the accuracy of eigenvalues of the colleague matrix found by the complex orthogonal QR.
In order to robustly catch all roots very near the boundary $\partial D$, a small constant $\delta>0$ is specified, so that
all roots within a slightly extended square of side length $2l(1+\delta)$ are retained and they are the output of the algorithm.  We refer to this slightly expanded square as the $\delta$-extended domain of $D$.

The non-adaptive rootfinding algorithm consists of two stages. First, an approximating polynomial in the precomputed basis of the given function $f$ is computed via least squares. From the coefficients of the approximating polynomial, a generalized colleague matrix is formed. Second, the complex orthogonal QR is applied to compute the eigenvalues, which are also the roots of the approximating polynomial (Theorem~\ref{thm:colleague}). The roots of the polynomial inside the domain $D$ are identified as the computed roots of the function $f$.

The adaptive version of the rootfinding algorithm takes an additional input parameter $n_{\rm exp }$, the order of polynomial expansions for approximating $f$. It can be chosen to be relatively small so the expansion accuracy $\epsilon_{\rm exp}$ does not have to be achieved on the domain $D$. When this happens, the algorithm continues to divide the square $D$ adaptively into smaller squares until the accuracy $\epsilon_{\rm exp}$ is achieved on those smaller squares. Then the non-adaptive algorithm is invoked on each smaller square obtained from the subdivision. After removing redundant roots near boundaries of neighboring squares, all the remaining roots are collected as the output of the adaptive algorithm.




\subsection{Basis precomputation \label{sec:precom} (Algorithm~\ref{alg:precom})}
We follow the description in Section~\ref{sec:three} to construct a reasonably well-conditioned basis of order $n$ on the boundary $\partial \Omega$ of the square $\Omega$, shown in Figure~\ref{fig:square}, centered at the origin with side length 2. For convenience, we choose the points where the polynomials are constructed to be Gaussian nodes on each side of the square $\Omega$. (See Section~\ref{sec:data}.) 
In the non-adaptive case, the order $n$ is chosen to be sufficiently large so that the expansion accuracy $\epsilon_{\rm exp}$ is achieved; in the adaptive case, $n$ is specified by the user.
\begin{enumerate}
	\item Choose the number of Gaussian nodes $k$ ($k\ge n/2$) on each side the square. Generate points $z_1 ,z_2,\ldots, z_m$, consisting of all Gaussian nodes from the four sides (See Figure~\ref{fig:square}), where the total number $m=4k$. 
	Generate $m$ random weights $w_1,w_2,\ldots,w_m$, uniformly drawn from $[0,1]$ to define a complex inner product in the form of (\ref{eq:com_prod}).
	\item Perform the complex orthogonalization process (See Lemma~\ref{lem:ttr_poly}) to obtain a polynomial basis $P_0, P_1, \ldots,  P_n$ of order $n$ on the boundary $\partial \Omega$ at the chosen points $z_1,  z_2, \ldots,z_m$, and the two vectors $\alpha,\beta\in \C^n$. The polynomials in $\cb{P_j}_{j=1}^n$ satisfies the three-term recurrence relation in the form of (\ref{eq:three_thm}), defined by the vectors $\alpha,\beta$. 
	\item Generate Gaussian weights $\tilde{w}_1, \tilde{w}_2,\ldots,\tilde{w}_m$ corresponding to Gaussian nodes $z_1,  z_2, \ldots,z_m$ on the four sides. Form the $m$ by $n+1$ basis matrix $G$ given by the formula
	\begin{equation}
		G =\bb{\mat{
		\sqrt{\tilde{w}_1} P_0(z_1) & \sqrt{\tilde{w}_1} P_1(z_1) & \ldots & \sqrt{\tilde{w}_1} P_n(z_1) \\
		\sqrt{\tilde{w}_2} P_0(z_2) & \sqrt{\tilde{w}_2} P_1(z_2) & \ldots & \sqrt{\tilde{w}_2} P_n(z_2) \\
		\vdots & \vdots & \ddots & \vdots \\
		\sqrt{\tilde{w}_m} P_0(z_m) & \sqrt{\tilde{w}_m} P_1(z_m) & \ldots & \sqrt{\tilde{w}_m} P_n(z_m)
		}}\,.
		\label{eq:gmat}
	\end{equation}
	\item Initialize for solving least squares involving the matrix $G$ in (\ref{eq:gmat}), by computing
	the reduced QR factorization of $G$
	\begin{equation*}
		G=QR\,,
	\end{equation*}
	where $Q\in\C^{m\times(n+1)}$ and $R\in\C^{(n+1)\times(n+1)}$.
\end{enumerate}
The precomputation is relatively inexpensive and only needs to be done once -- it can be done on the fly or precomputed in advance and stored. 
The vectors $\alpha,\beta$ and the matrices $Q, R$ are the only information required in our algorithms for rootfinding in a square domain.

\begin{remark}
	There is nothing special about the choice of QR above for solving least squares. Any standard method can be applied.
\end{remark}

\begin{remark}
	In the complex orthogonalization process, the constructed vector $q_{i}$ will automatically be complex orthogonal to any $q_j$ for $j<i-2$ in exact arithmetic due to the complex symmetry of the matrix $Z$. In practice, when $q_i$ is computed, reorthogonalizing $q_i$  against all preceding $q_j$ for $j<i$ is desirable to increase numerical stability. After Line~\ref{alg:precom:ortho} of Algorithm~\ref{alg:precom}, we reorthogonalize $v$ via the formula 
	\begin{equation}
		v\leftarrow v - [v,q_{i}] q_{i} - [v,q_{i-1}] q_{i-1} - [v,q_{i-2}]q_{i-2} \cdots - [v,q_{0}]q_{0}\,.
		\label{eq:reortho}
	\end{equation}
	Doing so will ensure the complex orthonormality of the computed basis $q_0,q_1,q_2,\ldots,q_n$ holds to near full accuracy. The reorthogonalization in (\ref{eq:reortho}) can be repeated if necessary to increase the numerical stability further.
\end{remark}


\subsection{Non-adaptive rootfinding \label{sec:nonadap} (Algorithm~\ref{alg:nadap_root})}

We first perform the precompution described Section~\ref{sec:precom}, obtaining a polynomial basis $\cb{P_j}_{j=0}^n$ of order $n$ on the boundary $\partial \Omega$, the vectors $\alpha, \beta$ defining the three-term recurrence, and the reduced QR of the basis matrix $G$ in (\ref{eq:gmat}).

\subsubsection*{Stage 1: construction of colleague matrices}
Since the basis $\cb{P_j}_{j=0}^n$ is precomputed on the square $\Omega$ (See Section~\ref{sec:precom}),
we map the input square $D$ to $\Omega$ via a translation and a scaling transform.
Thus we transform the function $f$ in $D$ accordingly into its translated and scaled version $\tilde{f}$ in $\Omega$, so that we find roots of $f$ in $D$ by first finding roots of $\tilde{f}$ in $\Omega$, followed by transforming those  roots back from $\Omega$ to $D$. To find roots of  $\tilde{f}$, we form an approximating polynomial $p$ of $\tilde{f}$, given by the formula
	\begin{equation}
		p(z)=\sum_{j=0}^n c_j P_{j}(z),
		\label{eq:pexp}
	\end{equation}
whose expansion coefficients are computed by solving a least-squares problem. 

\begin{enumerate}
	\item Translate and scale $f$ via the formula
	\begin{equation}
		\tilde{f}(z) = f(l z + z_0)\,.
	\end{equation}
	\item Form a vector $g\in \C^{m}$ given by the formula 
	\begin{equation}
		g = \bb{\mat{\sqrt{\tilde{w}_1} \tilde{f}(z_1) \\ \sqrt{\tilde{w}_2} \tilde{f}(z_2) \\ \vdots\\\sqrt{\tilde{w}_m} \tilde{f}(z_m)}}\,.
	\end{equation} 
	\item 
	Compute the vector $c=\bb{c_0,c_1,\ldots,c_n}^T\in \C^{n+1}$, containing the expansion coefficients in (\ref{eq:pexp}),
	via the formula
	\begin{equation}
		c= R^{-1}Q^*g \,,
	\end{equation}
	which is the least-squares solution to the linear system $Gc=g$. This procedure takes $O(mn)$ operations. 
	\item Estimate the expansion error by ${\abs{c_n}}/{\norm{c}}$. 
	If ${\abs{c_n}}/{\norm{c}}$ is smaller than the given expansion accuracy $\epsilon_{\rm exp}$, accept the expansion and the coefficient vector $c$. Otherwise, repeat the above procedures with a larger $n$. 
\end{enumerate}
The colleague matrix $C=A+e_n q^T$ in (\ref{eq:colleague}) in principle can be formed explicitly -- the complex symmetric $A$ can be formed by elements in the vectors $\alpha, \beta$, and the vector $q$ can be obtained from the coefficient vector $c$ together with the last element of $\beta$.
However, this is unnecessary since we only rotate generators of $C$ in the complex orthogonal QR algorithm (See Section~\ref{sec:QR}). The generators of $C$ here are vectors $\alpha, \beta, e_n$ and $q$ and they will be the inputs of the complex orthogonal QR\,. 

\begin{remark}
	It should be observed that the approximating polynomial $p$ of the function $\tilde{f}$ in (\ref{eq:pexp}) is only constructed on the boundary of $\partial \Omega$ since the points $z_1, z_2, \ldots, z_m$ used in forming the basis matrix $G$ in (\ref{eq:gmat}) are all on $\partial \Omega$.
However, due to the maximum principle (Theorem~\ref{thm:max}), the largest approximation error $\mathrm{max}_{z\in \Omega}|{\tilde{f}(z)-p(z)}|$ is attained on the boundary $\partial \Omega$ since the difference $\tilde{f}-p$ is analytic in $\Omega$. ($\tilde{f}$ is analytic in $\Omega$ since $f$ is analytic in $D$.) Thus the absolute error of the approximating polynomial $p$ is smaller in $\Omega$. As a result, once the approximation on the boundary $\partial \Omega$ achieves the required accuracy, it holds over the entire $\Omega$ automatically.

\end{remark}

\begin{remark}
	When the expansion accuracy $\epsilon_{\rm exp}$ is not satisfied for an initial choice of $n$, it is necessary to recompute all basis for a larger order $n$. This can be avoided by choosing a generously large order $n$ in the precomputation stage. 
	 This could be undesirable since the number  $m$ of Gaussian nodes needed on the boundary also grows accordingly.
	When a very large $n$ is needed, it is recommended to switch to the adaptive version (Algorithm~\ref{alg:adap_root}) with a reasonable $n$ for better efficiency and accuracy. 
\end{remark}

\subsubsection*{Stage 2: rootfinding by colleague matrices}
Due to Theorem~\ref{thm:colleague}, we find the roots of $p$ by finding eigenvalues of the colleague matrix $C$. 
Eigenvalues of $C$ are computed by the complex orthogonal QR algorithm in Section~\ref{sec:QR}. 
We only keep roots of $p$ within $\Omega$ or very near the boundary $\partial \Omega$ specified by the constant $\delta$. These roots are identified as those of the transformed function $\tilde{f}$. Finally, we translate and scale roots of $\tilde{f}$ in $\Omega$ back so that we recover the roots of the input function $f$ in $D$.

\begin{enumerate}
	\item Compute the eigenvalues of the colleague matrix $C$, represented by its generators $\alpha,\beta,e_n$ and $q$, by the complex orthogonal QR (Algorithm~\ref{alg:qrshift}) in $O(n^2)$ operations. The eigenvalues/roots are labeled by $\tilde{r}_1, \tilde{r}_2,\ldots,\tilde{r}_n$.
	\item Only retain a root $\tilde{r}_j$ for $j=1,2,\ldots,n$, if it is inside the $\delta$-extended square of the domain $\Omega$:
	\begin{equation}
		\abs{\mathrm{Re}\,\tilde{r}_j} < 1+\delta\quad \mathrm{and}\quad \abs{\mathrm{Im}\,\tilde{r}_j} < 1+\delta.	
	\end{equation}

	\item Scale and translate all remaining roots back to the domain $D$ from $\Omega$
	\begin{equation}
		r_j = l\tilde{r}_j+z_0
	\end{equation}
	and all $r_j$ are identified as roots of the input function $f$ in $D$.
\end{enumerate}

\begin{remark}
	\label{rmk:newton}
	The robustness and accuracy of rootfinding can be improved by applying Newton's method 
	\begin{equation}
		r_j \leftarrow r_j - \frac{f(r_j)}{f'(r_j)}
	\end{equation}
	for one or two iterations after roots $r_i$ are located at little cost, if the values of $f$ and $f'$ are available inside $D$. 
	
\end{remark}

%
%


\subsection{Adaptive rootfinding \label{sec:adap} (Algorithm~\ref{alg:adap_root})}
In this section we describe an adaptive version of the rootfinding algorithm in Section \ref{sec:nonadap}. 
The adaptive version has identical inputs as the non-adaptive one, besides one additional input $n_{\rm exp}$ as the chosen expansion order. (Obviously, the input $n_{\rm exp}$ should be no smaller than the order of the precomputed basis $n$.) \hl{The choices of $n_{\rm exp}=20-30$ are tested to be robust for double precision rootfinding while $n_{\rm exp}=40-60$ is generally good for extended precision calculations.}
The algorithm has two stages. First, it divides the domain $D$ into smaller squares, on which the polynomial expansion of order $n_{\rm exp}$ converge to the pre-specified accuracy $\epsilon_{\rm exp}$. In the second stage, rootfinding is performed on these squares by procedures very similar to those in Algorithm~\ref{alg:nadap_root}, followed by removing duplicated roots near boundaries of neighboring squares.

\begin{figure}
	\centering
	\begin{subfigure}[t]{0.48\textwidth}
		\centering
		\includegraphics[width=\textwidth]{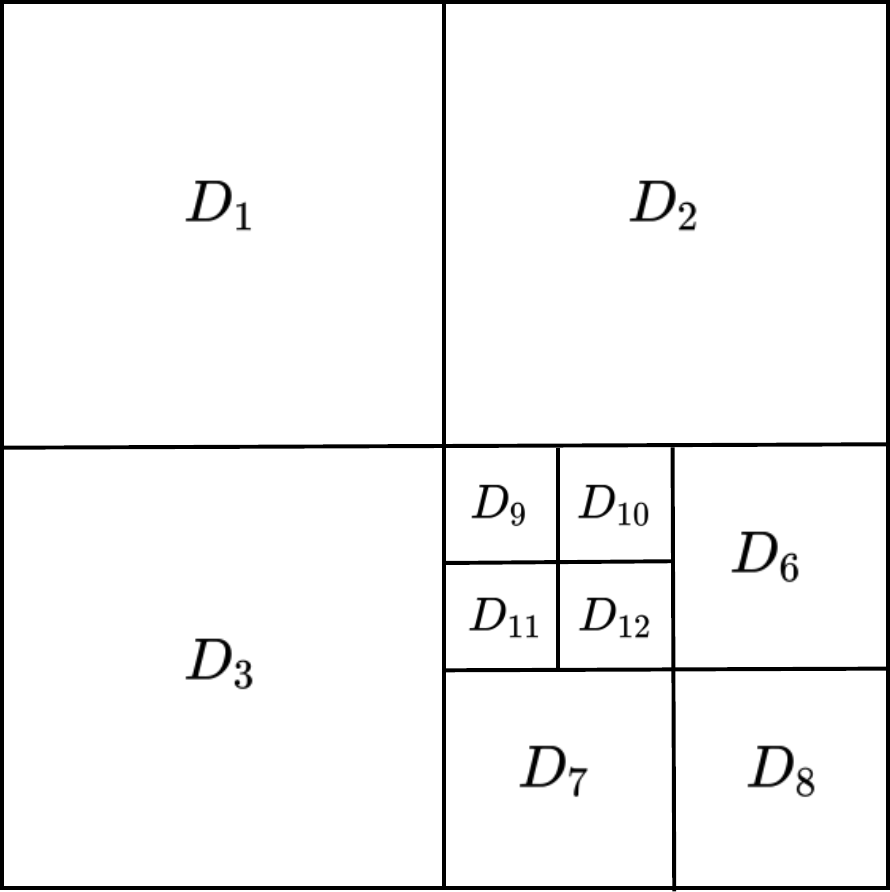}
	\end{subfigure}

	\caption{The input domain $D$, labeled $D_0$, is divided three times during an adaptive rootfinding by Algorithm~\ref{alg:adap_root}. Squares $D_4$ and $D_5$ are not shown since they are divided further; $D_4$ is divided in to $D_5, D_6, D_7, D_8$ and $D_5$ is divided into $D_9,D_{10},D_{11},D_{12}$. In each square shown here, rootfinding is done by finding eigenvalues of the colleague matrix. \label{fig:squares}}
\end{figure}

\subsubsection*{Stage 1: adaptive domain division}
In this stage, we recursively divide the input domain $D$ until the order $n_{\rm exp}$ expansion achieves the specified accuracy $\epsilon_{\rm exp}$. 
For convenience, we label the input domain $D$ as $D_0$. Here we illustrate this process by describing the case depicted in Figure~\ref{fig:squares} where $D$ is divided three times. Initially, we compute the order $n_{\rm exp}$ expansion on input domain $D_0$, as in Step 2 and 3 in Section~\ref{sec:nonadap}, and the accuracy is not reached. We divide $D_0$ into four identical squares $D_1,D_2,D_3,D_4$. We repeat the process to these four squares and the expansion accuracy except on $D_4$. Then we continue to divide $D_4$ into four pieces $D_5,D_6,D_7,D_8$.
Again we check expansion accuracy on all newly generated squares $D_5-D_{8}$ and the accuracy requirement is satisfied for all squares except for $D_5$. Then we continue to divide $D_5$ into $D_9, D_{10}, D_{11}, D_{12}$.
We keep all expansion coefficient vectors $c^{(i)}\in \C^{n_{\rm exp}+1}$ for (implicitly) forming colleague matrices on domains $D_i$, where the expansion accuracy $\epsilon_{\rm exp}$ is reached. As a result, rootfinding in this example only happens on squares $D_1, D_4$ and $D_5 - D_{12}$, so in total 10 eigenvalue problems of size $n_{\rm exp}$ by $n_{\rm exp}$ are solved.

In the following description, we initialize $D_0$ as the input domain $D$, and $i=0$. We also keep track the total number of squares $k$ ever formed; since initially we only have the input domain, the number of squares $k=1$.

\begin{enumerate}
	\item For domain $D_i$ centered at $z_0^{(i)}$ with side length $2l^{(i)}$, translate and scale the input function $\restr{f}{D_i}$ restricted to $D_i$ to the square $\Omega$ via the formula
		\begin{equation}
		\tilde{f}(z) = \restr{f}{D_i}(l^{(i)} z + z^{(i)}_0)\,.
	\end{equation}
	\item Form a vector $g^{(i)}\in \C^{m}$ given by the formula
	\begin{equation}
		g^{(i)} = \bb{\mat{\sqrt{\tilde{w}_1} \tilde{f}(z_1) \\ \sqrt{\tilde{w}_2} \tilde{f}(z_2) \\ \vdots\\\sqrt{\tilde{w}_m} \tilde{f}(z_m)}}\,.
	\end{equation} 
	\item 
	Compute the vector $c^{(i)}\in \C^{n_{\rm exp}+1}$ containing the expansion coefficients
	by the formula
	\begin{equation}
				c= R^{-1}Q^*g^{(i)} \,,
	\end{equation}
	which is the least-squares solution to the linear system $Gc^{(i)}=g^{(i)}$. This procedure takes $O(mn_{\rm exp})$ operations. 
	
	\item Estimate the expansion error by ${\abs{c^{(i)}_{n_{\rm exp}}}}/{\norm{c^{(i)}}}$. 
	If ${\abs{c^{(i)}_{n_{\rm exp}}}}/{\norm{c^{(i)}}}$ is smaller than the given expansion accuracy $\epsilon_{\rm exp}$, accept the expansion and the coefficient vector $c^{(i)}$ and save it for constructing colleague matrices.
	If the accuracy $\epsilon_{\rm exp}$ is not reached, continue to divide domain $D_i$ into four square domains $D_{k+1}, D_{k+2}, D_{k+3}, D_{k+4}$ and increment the number of squares $k$ by $4$, i.e. $k\leftarrow k+4 $.
	\item If $i$ does not equal $k$, this means there are still squares where expansions may not converge. Increment $i$ by 1 and go to Step 1.
		
\end{enumerate}
After all necessary subdivisions, all expansions of $f$ clearly achieve the accuracy $\epsilon_{\rm exp}$. 
Since the coefficient vector $c^{(i)}$ is saved whenever the expansion converges in the divided domain $D_i$, the corresponding colleague matrix $C^{(i)}$ can be formed for rootfinding on $D_i$. As in the non-adaptive case, the matrix $C^{(i)}$ is not formed explicitly but is represented by its generators as inputs to the complex orthogonal QR.
\subsubsection*{Stage 2: rootfinding by colleague matrices}
We have successfully divided $D$ into smaller squares. On each of those domain $D_i$, where an approximating polynomial of order $n_{\rm exp}$ converges to the specified accuracy $\epsilon_{\rm exp}$, we find roots of $f$ in the $\delta$-extended domain of $D_i$ as in Stage 2 of the non-adaptive version in Section~\ref{sec:nonadap}.
Then we collect all roots on each smaller squares and remove duplicated roots near edges of neighboring squares. The remaining roots will be identified as roots of $f$ on the $\delta$-extended domain of the input domain $D$.

\begin{enumerate}
	\item For all $D_i$ in which the expansion converges to $\epsilon_{\rm exp}$, form the generator $q^{(i)}$ according to the coefficient vector $c^{(i)}$ and $\beta$. Compute the eigenvalues of the colleague matrix $C^{(i)}$, represented by its generators $\alpha,\beta,e_n$ and $q^{(i)}$, by the complex orthogonal QR (Algorithm~\ref{alg:qrshift}) in $O(n_{\rm exp}^2)$ operations. Label eigenvalues for each $D_i$ as $\tilde{r}_j^{(i)}$.
	\item For each problem on $D_i$, only keep  a root $\tilde{r}^{(i)}_j$ if it is inside of the square domain slightly extended from $\Omega$:
	\begin{equation}
		\abs{\mathrm{Re}\,\tilde{r}_j} < 1+\delta\quad \mathrm{and}\quad \abs{\mathrm{Im}\,\tilde{r}_j} < 1+\delta.	
	\end{equation}

	\item Scale and translate all remaining roots back to the domain $D_i$ from $\Omega$
	\begin{equation}
		r_j^{(i)} = l^{(i)}\tilde{r}_j^{(i)}+z_0^{(i)}
	\end{equation}
	and all $r_j^{(i)}$ left are identified as roots of the input function $\restr{f}{D_i}$ restricted to $D_i$.
	\item Collect all roots of $r_j^{(i)}$ for all domain $D_i$ where rootfinding is performed. Remove duplicated roots found in neighboring squares. The remaining roots are identified as roots of the input function $f$ in the domain $D$.
\end{enumerate}

\begin{remark}
	In the final step of Stage 2, duplicated roots from neighboring squares are removed. It should be observed that  multiple roots can be distinguished from duplicated roots by the accuracy they are computed.	When the roots are simple, our algorithm achieves full accuracy determined the machine epsilon $u$, especially paired with refinements by Newton's method (see Remark~\ref{rmk:newton}). On the other hand, roots with multiplicity $m$ are sensitive to perturbations and can only be computed to the $m$th root of machine epsilon $u^{1/m}$. 
Consequently, one can distinguish if close roots are duplicated or due to multiplicity by the number digits they agree. 
\label{rmk:mult}
\end{remark}

\begin{algorithm}[H]
\caption{
\label{alg:precom}
(Precomputation of basis matrix and three term-recurrence) 
\textbf{Inputs}: The algorithm accepts $m$ nodes $z_1,z_2,\ldots,z_m$ that are Gaussian nodes on each sides of the square domain $\Omega$, along with $m$ Gaussian weights $\tilde{w}_1,\tilde{w}_2,\ldots,\tilde{w}_m$ associated with those $m$ nodes.
It also accepts another set of $m$ real weights $w_1, w_2, \ldots, w_m \in \C$, drawn uniformly from $[0,1]$, for the complex inner product in \eqref{com_prod}.
\textbf{Outputs}: It returns as output the vectors $\alpha, \beta \in \C^n$ that define the three-term recurrence relation. It also returns the reduced QR factorization of $QR$ of the matrix $G$ in \eqref{gmat}.
}
\begin{algorithmic}[1]
\State Define $Z=\mathrm{diag}(z_1,z_2,\ldots,z_m)$ and $b=(1,1,\ldots,1)\in \C^m$
\State
Initialization for complex orthogonalization

$q_{-1}\leftarrow 0, \beta_0 \leftarrow 0, q_0 \gets b/\sb{b}_w$ 

\For{$i=0,\ldots,n-1$}	

	$v\gets Zq_{i} $
\State
	Compute $\alpha_{i+1}$: $\alpha_{i+1} \gets \sb{q_i,v}_w$
	
	$v\gets v - \beta_{i}q_{i-1} - \alpha_{i+1}q_{i}$ \label{alg:precom:ortho}
	
	\State
	Compute $\beta_{i+1}$: $\beta_{i+1} \gets [v]_w$
	\State	
	Compute $q_{i+1}$: $q_{i+1} \gets v/\beta_{i+1}$
\EndFor

\State Set the $i$th component of $q_j$ as the value of $P_j$ at $z_i$:
$P_i(z_j) = (q_i)_j$

\State
Form the basis matrix $G\in \C^{m\times(n+1)}$ for least squares:

$G_{ij} \gets \sqrt{\tilde{w_j}}P_i(z_j)$.

\State
Compute the reduced QR factorization of $G$:

$QR \gets G$ \label{alg:precom:svd}
	
	\end{algorithmic}
\end{algorithm}

\newpage

\begin{algorithm}[H]
\caption{
  \label{alg:elim}
(A single elimination of the superdiagonal) \textbf{Inputs:}
This algorithm accepts
as inputs two vectors $d$ and $\beta$ representing the diagonal and
superdiagonal, respectively, of an $n\times n$ complex symmetric matrix $A$, as well
as two vectors $p$ and $q$ of length $n$, where $A+pq^T$ is lower
Hessenberg.  
\textbf{Outputs:} It returns as its outputs the rotation matrices
$Q_2,Q_3,\ldots, Q_n \in \C^{2 \times 2}$ so that, letting $U_k \in
\C^{n\times n}$, $k=2,3,\ldots,n$, denote the matrices that rotate the
$(k-1,k)$-plane by $Q_k$, $U_2 U_3 \cdots U_n (A + pq^T)$ is lower
triangular. It also returns the vectors $\underline d$, $\underline \gamma$,
and $\underline p$, where $\underline d$ and $\underline \gamma$ represent
the diagonal and subdiagonal, respectively, of the matrix $U_2 U_3 \cdots
U_n A$, and $\underline p = U_2 U_3 \cdots U_n p$.}

\begin{algorithmic}[1]

\State Set $\gamma\gets \beta$, where $\gamma$ represents the subdiagonal.
\State Make a copy of $q$, setting $\tilde q\gets q$.

\For{$k=n,n-1,\ldots,2$}

  \State 
    \label{alg:elim:qk}
  Construct the $2\times 2$ rotation matrix complex symmetric $Q_k$ so that
  \begin{align*}
    \Bigl( Q_k \left[
      \begin{array}{c}
      \beta_{k-1} + p_{k-1} q_k \\
      d_k + p_k q_k
      \end{array}\right]
    \Bigr)_1 = 0.
  \end{align*}

  \If{$k\ne 2$}
    \State 
      \label{alg:elim:subxsubsub}
    Rotate the subdiagonal and the sub-subdiagonal:
    \begin{align*}
      \gamma_{k-2} \gets \Bigl( Q_k
      \left[
      \begin{array}{c}
        \gamma_{k-2} \\
       -\tilde{q}_k p_{k-2}
      \end{array} \right] \Bigr)_1
    \end{align*}
  \EndIf

  \State 
    \label{alg:elim:diagxsub}
  Rotate the diagonal and the subdiagonal:
  \begin{math}
    \left[
    \begin{array}{c}
      d_{k-1} \\
      \gamma_{k-1}
    \end{array} \right]
    \gets Q_k
    \left[
    \begin{array}{c}
      d_{k-1} \\
      \gamma_{k-1}
    \end{array} \right].
  \end{math}

  \State 
    \label{alg:elim:supxdiag}
  Rotate the superdiagonal and the diagonal:
  \begin{math}
    \left[
    \begin{array}{c}
      \beta_{k-1} \\
      d_{k}
    \end{array} \right]
    \gets Q_k
    \left[
    \begin{array}{c}
      \beta_{k-1} \\
      d_{k}
    \end{array} \right].
  \end{math}

  \State 
    \label{alg:elim:p}
  Rotate $p$:
  \begin{math}
    \left[
    \begin{array}{c}
      p_{k-1} \\
      p_{k}
    \end{array} \right]
    \gets Q_k
    \left[
    \begin{array}{c}
      p_{k-1} \\
      p_{k}
    \end{array} \right]
  \end{math}

  \If{ $\abs{ p_{k-1}q_k }^2 + \abs{ p_k q_k }^2 >
    \abs{\beta_{k-1}}^2 + \abs{d_k}^2$ } 
    \State 
      \label{alg:elim:corr}
    Correct the vector $p$, setting
      \begin{math}
        p_{k-1} \gets -\frac{\beta_{k-1}}{q_k}
      \end{math}
  \EndIf

  \State 
    \label{alg:elim:qtil}
  Rotate $\tilde q$:
  \begin{math}
    \left[
    \begin{array}{c}
      \tilde{q}_{\,k-1} \\
      \tilde{q}_{\,k}
    \end{array} \right]
    \gets Q_k
    \left[
    \begin{array}{c}
      \tilde{q}_{k-1} \\
      \tilde{q}_{k}
    \end{array} \right]
  \end{math}

\EndFor

\State Set $\underline d \gets d$, $\underline \gamma \gets \gamma$, and
$\underline p \gets p$.

\end{algorithmic}
\end{algorithm}

\newpage

\begin{algorithm}[H]
\caption{
  \label{alg:rotback}
(Rotating the matrix back to Hessenberg form) \textbf{Inputs:} This algorithm
accepts as inputs $n-1$ rotation matrices $Q_2, Q_3, \ldots, Q_n \in \C^{n
\times n}$, two vectors $d$ and $\gamma$ representing the diagonal and
subdiagonal, respectively, of an $n\times n$ complex matrix $B$, and two
vectors $\underline{p}$ and $q$ of length $n$, where $B+\underline{p}q^T$ is lower triangular.
\textbf{Outputs:} Letting $U_k \in \C^{n\times n}$, $k=2,3,\ldots,n$, denote
the matrices that rotate the $(k-1,k)$-plane by $Q_k$, this algorithm
returns as its outputs the vectors $\underline d$, $\underline \beta$, and
$\underline q$, where $\underline d$ and $\underline \beta$ represent the
diagonal and superdiagonal, respectively, of the matrix $B U_n^T U_{n-1}^T
\cdots U_2^T$, and $\underline q = U_2 U_3 \cdots U_n q$.}

\begin{algorithmic}[1]

\For{$k=n,n-1,\ldots,2$}

  \State 
    \label{alg:rotback:diagsup}
  Rotate the diagonal and the superdiagonal:
  \begin{align*}
    \left[
    \begin{array}{c}
      d_{k-1} \\
      \beta_{k-1}
    \end{array} \right]
    \gets
    {Q_k}
    \left[
    \begin{array}{c}
      d_{k-1} \\
      -\underline{p}_{k-1} q_k
    \end{array} \right].
  \end{align*}

  \State 
    \label{alg:rotback:subdiag}
  Rotate the subdiagonal and the diagonal:
  \begin{align*}
    d_k \gets \Bigl( {Q_k}
    \left[
    \begin{array}{c}
      \gamma_{k-1} \\
      d_k 
    \end{array} \right] \Bigr)_2
  \end{align*}

  \State 
    \label{alg:rotback:q}
  Rotate $q$:
  \begin{math}
    \left[
    \begin{array}{c}
      q_{k-1} \\
      q_{k}
    \end{array} \right]
    \gets Q_k
    \left[
    \begin{array}{c}
      q_{k-1} \\
      q_{k}
    \end{array} \right]
  \end{math}

\EndFor

\State Set $\underline d \gets d$, $\underline \beta \gets \beta$, and
$\underline q \gets q$.

\end{algorithmic}
\end{algorithm}

\newpage

\begin{algorithm}[H]
\caption{
  \label{alg:qrshift}
(Shifted explicit QR) \textbf{Inputs:} 
This algorithm accepts as inputs two vectors $d$ and $\beta$ representing
the diagonal and superdiagonal, respectively, of an $n\times n$ complex symmetric
matrix $A$, as well as two vectors $p$ and $q$ of length $n$, where $A+pq^T$
is lower Hessenberg. It also accepts a tolerance $\epsilon > 0$, which
determines the accuracy the eigenvalues are computed to.
\textbf{Outputs:} It returns as its output the vector $\lambda$ of length
$n$ containing the eigenvalues of the matrix $A+pq^T$.}

\begin{algorithmic}[1]

\For{$i=1,2,\ldots,n-1$}
  \label{alg:qrshift:outer}
  
  \State Set $\mu_\text{sum} \gets 0$.
  \While{$\beta_i + p_i q_{i+1} \ge \epsilon$}
    \Comment{Check if $(A+pq^T)_{i,i+1}$ is close to zero}

    \State Compute the eigenvalues $\mu_1$ and $\mu_2$ of the $2\times 2$ 
    submatrix
    \begin{math}
      \left[
        \begin{array}{cc}
        d_i + p_i q_i & \beta_i + p_i q_{i+1} \\
        {\beta}_i + p_{i+1} q_i & d_{i+1} + p_{i+1} q_{i+1}
        \end{array}
      \right].
    \end{math}
      \Comment{This is just ${(A+pq^T)_{i:i+1,i:i+1}}$}

    \State Set $\mu$ to whichever of $\mu_1$ and $\mu_2$ is closest
    to $d_i + p_i q_i$.

    \State Set $\mu_\text{sum} \gets \mu_\text{sum} + \mu$.

    \State Set $d_{i:n} \gets d_{i:n} - \mu$.

    \State Perform one iteration of QR (one step of Algorithm~\ref{alg:elim}
    followed by one step of Algorithm~\ref{alg:rotback}) on the
    submatrix $(A+pq^T)_{i:n,i:n}$ defined by the vectors $d_{i:n}$,
    $\beta_{i:n-1}$, $p_{i:n}$, and $q_{i:n}$.
  \EndWhile

  \State Set $d_{i:n} \gets d_{i:n} + \mu_\text{sum}$.

\EndFor

\State Set $\lambda_i \gets d_i + p_i q_i$, for $i=1,2,\ldots,n$.

\end{algorithmic}
\end{algorithm}

\begin{algorithm}[H]
\caption{  \label{alg:nadap_root}
(Non-adaptive rootfinding in square domain $D$) \textbf{Inputs}: This algorithm accepts as inputs an analytic function $f$ on a square domain $D$ centered at $z_0$ with side length $2l$, in which roots of $f$ are to be found, as well as precomputed quantities from Algorithm~\ref{alg:precom}, including vectors $\alpha, \beta$ defining the three-term recurrence, matrices $Q, R$ constituting the reduced QR of the basis matrix $G\in \C^{m\times(n+1)}$, Gaussian points $z_1,z_2,\ldots,z_m$ and Gaussian weights $\tilde{w}_1,\tilde{w}_2,\ldots,\tilde{w}_m$ used in defining $G$. It also accepts two accuracies $\epsilon_{\rm exp}$, the accuracy for the expansion, and $\epsilon_{\rm eig}$, the accuracy for eigenvalues, as well as a positive constant $\delta$ so that roots within the $\delta$-extension of $D$ are included.
 \textbf{Outputs:} It returns as output the roots of $f$ on the domain $D$.}
\begin{algorithmic}[1]
\State Scale and translate $f$ in $D$ to $\tilde{f}$ in $\Omega$: 
\begin{center}
	$\tilde{f}(z) \gets f(lz+z_0)$ for $z\in D$\,.	
\end{center}

\State Form the vector $g$ for least squares:
\label{alg:nonadap:g}

$g \gets \bb{\mat{\sqrt{\tilde{w}_1} \tilde{f}(z_1) & \sqrt{\tilde{w}_2} \tilde{f}(z_2) & \cdots  & \sqrt{\tilde{w}_m} \tilde{f}(z_m)}}^T$.

\State Compute the coefficient vector $c$: \label{alg:nonadap:c}
\begin{center}
$c \gets V \Sigma^{-1} U^{*} g $\,.
\end{center}

\If{ $\abs{c_n}/\norm{c} > \epsilon_{\rm exp}$ } 
\State
Precompute a basis with a larger order $n$ and go to Line~\ref{alg:nonadap:g}.

\EndIf
\State Form vector $q$:
\begin{center}
$q \gets  -\beta_n \bb{\mat{\frac{c_0}{c_n} & \frac{c_1}{c_n} & \cdots & \frac{c_{n-1}}{c_n} } }^T$\,.
\end{center}

\State Perform Algorithm~\ref{alg:qrshift} on the colleague matrix generated by vectors $\alpha$ and $\beta_{1:n-1}$, representing the diagonal and subdiagonal elements of $A$, as well as vectors $e_n$ and $q$ for the rank-1 part. The accuracy of computed eigenvalues $\tilde{r}_1, \tilde{r}_2,\ldots,\tilde{r}_n$ is determined by $\epsilon_{\rm eig}$.

\State
Keep eigenvalues $\tilde{r}_i$ if
\begin{math}
\abs{\mathrm{Re}\,\tilde{r}_i} < 1+\delta \quad  \mathrm{and}\quad \abs{\mathrm{Im}\,\tilde{r}_i} < 1+\delta.	
	\end{math}
\State Scale and translate the remaining eigenvalues:
\begin{center}
	$r_i\gets l \tilde{r}_i + z_0$\,.
\end{center}
\State
	Return all $r_i$ as roots of $f$ on the $\delta$-extended domain of $D$.

\end{algorithmic}
\end{algorithm}

\begin{algorithm}[H]
\caption{  \label{alg:adap_root}
(Adaptive rootfinding in square domain $D$) \textbf{Inputs}: This algorithm accepts as inputs an analytic function $f$ on a square domain $D$ centered at $z_0$ with side length $2l$, in which roots of $f$ are to be found, as well as precomputed quantities from Algorithm~\ref{alg:precom}, including vectors $\alpha, \beta$ defining the three-term recurrence, matrices $Q, R$ constituting the reduced QR of the basis matrix $G\in \C^{m\times(n+1)}$, Gaussian points $z_1,z_2,\ldots,z_m$ and Gaussian weights $\tilde{w}_1,\tilde{w}_2,\ldots,\tilde{w}_m$ used in defining $G$. It also accepts two accuracies $\epsilon_{\rm exp}$, the accuracy for the expansion, and $\epsilon_{\rm eig}$, the accuracy for eigenvalues, as well as $n$ the order of expansion and 
a positive constant $\delta$ so that roots within the $\delta$-extension of $D$ are included.
 \textbf{Outputs:} It returns as output the roots of $f$ in the domain $D$.}
 
\begin{algorithmic}[1]
\State Set $D_0 \gets D$, $l^{(0)} \gets l$ and  $z^{(0)}_0 \gets z_0$.
\State Set $i\gets 0$. \Comment{Square index}
\State Set $k\gets 1$. \Comment{Total number of squares}
\While{ $i < k$ } \Comment{If the last square has not been reached}
\State Scale and translate $f$ on $D_i$ to $\tilde{f}$ on $\Omega$: 
\begin{center}
	$\tilde{f}(z) \gets f(l^{(i)} z+z^{(i)}_0)$ for $z\in D_i$\,.	
\end{center}

\State Form the vector $g^{(i)}$ for least squares:

$g^{(i)} \gets \bb{\mat{\sqrt{\tilde{w}_1} \tilde{f}(z_1) & \sqrt{\tilde{w}_2} \tilde{f}(z_2) & \cdots  & \sqrt{\tilde{w}_m} \tilde{f}(z_m)}}^T$

\State Compute the coefficient vector $c^{(i)} $: 
\begin{center}
$c^{(i)}  \gets R^{-1}Q^* g^{(i)} $\,.
\end{center}

\If{ $\abs{c^{(i)}_{n}}/\norm{c^{(i)}} > \epsilon_{\rm exp}$ }  \Comment{Keep dividing if not convergent}
\State Divide $D_i$ into four smaller squares $D_{k+1},D_{k+2},D_{k+3},D_{k+4}$. 
 \State Set $k\gets k +4$.
\Else					
\Comment{Keep the coefficient vector if convergent}

\State Form vector $q^{(i)}$:
\begin{center}
$q^{(i)} \gets  -\beta_n \bb{\mat{\frac{c_0}{c_n} & \frac{c_1}{c_n} & \cdots & \frac{c_{n-1}}{c_n} } }^T$\,.
\end{center}

\EndIf
\State Set $i \gets i+1$. \Comment{Go to the next square}
\EndWhile

\For{$D_i$ with a convergent expansion}
\State Perform Algorithm~\ref{alg:qrshift} on the colleague matrix generated by vectors $\alpha$ and $\beta_{1:n-1}$, representing the diagonal and subdiagonal elements of $A$, as well as vectors $e_n$ and $q^{(i)}$ for the rank-1 part. The accuracy of computed eigenvalues $\tilde{r}^{(i)}_1, \tilde{r}^{(i)}_2,\ldots,\tilde{r}^{(i)}_n$ is determined by $\epsilon_{\rm eig}$.

\State
Keep eigenvalue $\tilde{r}^{(i)}_j$ if
\begin{math}
\abs{\mathrm{Re}\,\tilde{r}^{(i)}_j} < 1+\delta \quad  \mathrm{and}\quad \abs{\mathrm{Im}\,\tilde{r}^{(i)}_j} < 1+\delta.	
	\end{math}
\State Scale and translate the remaining eigenvalues:
\begin{center}
	$r^{(i)}_j \gets l^{(i)} \tilde{r}^{(i)}_j + z^{(i)}_0$\,.
\end{center}
\EndFor

\State
	Collect all roots $r^{(i)}_j$ and remove duplicated roots from neighboring squares. Return all the remaining roots as the roots of $f$ in the domain $D$.

\end{algorithmic}
\end{algorithm}

%
%
%


\newpage
\section{Numerical results\label{sec:result}}

In this section, we demonstrate the performance of Algorithm~\ref{alg:nadap_root} (non-adaptive) and Algorithm~\ref{alg:adap_root} (adaptive) for finding all roots of analytic functions over any specified square domain $D$. Because the use of complex orthogonal matrix, no proof of backward stability is provided in this paper. Numerical experiments show our complex orthogonal QR algorithm is stable in practice and behaves similarly to the version with unitary rotations in \cite{serkh2021}.

We apply Algorithm~\ref{alg:nadap_root} to the first three examples and Algorithm~\ref{alg:adap_root} to the remaining two. In all experiments in this paper, we do not use Newton's method to refine roots computed by algorithms, although the refinement can be done at little cost to achieve full machine accuracy. We set the two accuracies $\epsilon_{\rm exp}$ (for the polynomial expansion) and $\epsilon_{\rm eig}$ (for the QR algorithm) both to the machine epsilon.
We set the domain extension parameter $\delta$ to be $10^{-6}$, determining the relative extension of the square within which roots are kept. For all experiments, we used the same set of polynomials as the basis, precomputed to order $n=100$, with $60$ Gaussian nodes  on each side of the square (so in total $m=240$). The condition number of this basis (See Section~\ref{sec:data}) is about 1000.
 When different expansion orders are used, the number of points on each side is always kept as $60$. Given a function $f$ whose roots are to be found, we measure the accuracy of the computed roots $\hat{z}$ given by the quantity $\eta(\hat{z})$ defined by
\begin{equation}
	\eta (\hat{z}) \eqdef \abs{\frac{f(\hat{z})}{f'(\hat{z})}}\,,
\end{equation}
where $f'$ is the derivative of $f$; it is the size of the Newton step if we were to refine the computed root $\hat{z}$ by Newton's method.

In all experiments, we report the order $n$ of the approximating polynomial, the number $n_{\rm roots} $ of computed roots, the value of $\mathrm{max}_i ~\eta(\hat{z}_i)$, where the maximum is taken over all roots found over the region of interest.
For the non-adaptive algorithm, we report the norm $\norm{q}$ of the vector $q$ in the colleague matrix (See (\ref{eq:qvec})) and the size of rotations in QR iterations. 
For the adaptive version, we report the maximum level $n_{\rm levels}$ of the recursive division and the number of times $n_{\rm eig}$ eigenvalues are computed by Algorithm~\ref{alg:qrshift}. The number $n_{\rm eigs}$ is also the number of squares formed by the recursive division.

We implemented the algorithms in FORTRAN 77, and compiled it using GNU Fortran, version 12.2.0. For all timing experiments, the Fortran codes were compiled with the optimization \texttt{-O3} flag. All experiments are conducted on a MacBook Pro laptop, with 64GB of RAM and an Apple M1 Max CPU.



\subsection{$f_{\rm cosh}$: A function with a pole near the square}
Here we consider a function given by
\begin{equation}
	f_{\rm cosh}(z) = \frac{\cosh(3\pi z/2)}{z-2},
\end{equation}
over a square centered at $0$ with side length $2$, with a singularity outside of the square domain. Due to the singularity at $z=2$, the sizes of the approximating polynomial coefficients decay geometrically, as shown in Figure~\ref{fig:conv}. The results of numerical experiments with order $n=80$ and $100$ are shown in Table~\ref{tab:ex0_double}.

\begin{table}[htb]
\centering
\begin{tabular}{l |l|l| l | l }
$n$ & $\norm{q}$  & $n_{\rm roots}$ & $\mathrm{max}_i \eta(\hat{z}_i)$ & Timings (s) \\
\hline
80 & $0.11\cdot 10^{17}$ &4 & $0.55\cdot 10^{-11}$ & $0.27\cdot 10^{-1}$\\
100 & $0.18\cdot 10^{17}$ &4 & $0.83\cdot 10^{-11}$ & $0.37\cdot 10^{-1}$
\end{tabular}
\caption{The results of computing roots of $f_{\rm cosh}(z)$ in double precision, using the non-adaptive Algorithm~\ref{alg:nadap_root} with the polynomial expansion order $n=80$ and $n=100$. }
\label{tab:ex0_double}
\end{table}

\begin{figure}[h]
	\centering
	\begin{subfigure}[t]{0.47\textwidth}
		\centering
		\includegraphics[width=\textwidth]{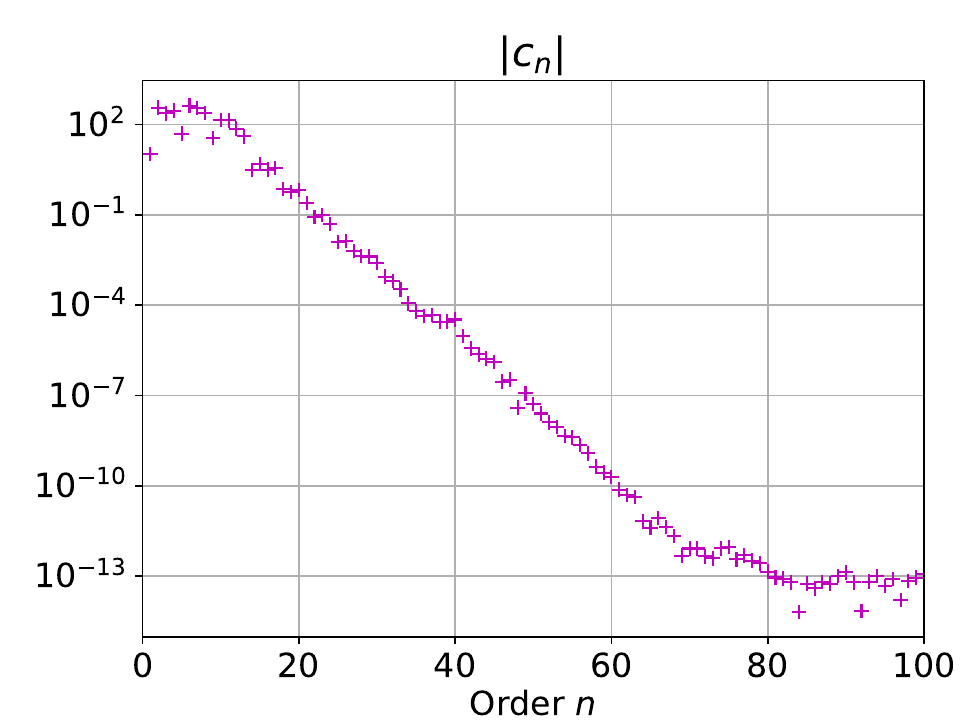}
				\caption{}
	\end{subfigure}	
	~
	\begin{subfigure}[t]{0.48\textwidth}
		\centering
		\includegraphics[width=\textwidth]{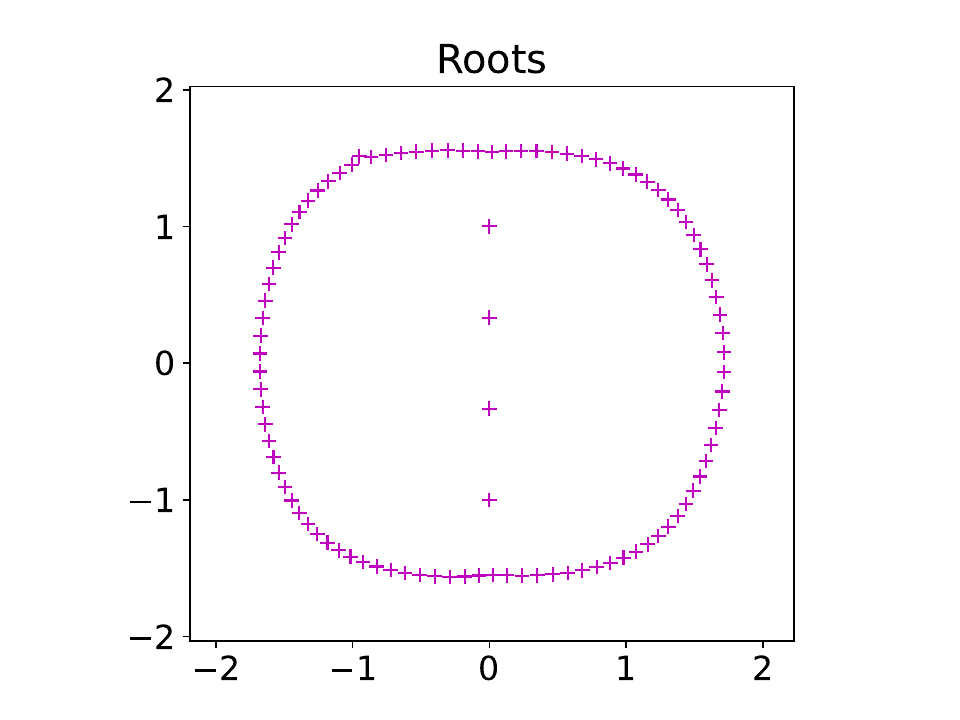}
				\caption{}
	\end{subfigure}	
	\caption{The magnitude of the leading expansion coefficients $\abs{c_{n}}$ of $f_{\rm cosh}$ is plotted in (a). Due to the singularity at $z=2$, the coefficients decays geometrically. All computed roots of the approximating polynomial of $f_{\cosh}$ are shown in (b).
	\label{fig:conv}}
\end{figure}

\subsection{$f_{\rm poly}$: A polynomial with simple roots \label{sec:ex_poly}}
Here we consider a polynomial of order $5$, whose formula is given by
\begin{equation}
	f_{\rm poly}(z) = (z-0.5)(z-0.9)(z+0.8)(z-0.7\I)(z+0.1\I).
\end{equation}
All its roots are simple and inside a square of side length $l=2$ centered at $z_0=0$. We construct a polynomial of order $n$ and apply Algorithm~\ref{alg:nadap_root} to find its roots. The results of our experiment are shown in Table~\ref{tab:ex1_double} (double precision) and Table~\ref{tab:ex1_ext} (extended precision). 
In Figure~\ref{fig:poly}, the size of complex orthogonal rotations for $n=40$ in the QR iterations for finding roots of $f_{\rm poly}$
are provided.
It is clear that the error in computed roots are insensitive to the size of $\norm{q}$ and the order of polynomial approximation used. This is the feature of the unitary version of our QR algorithms in \cite{serkh2021}, which is proven to be structured backward stable.
Although no similar proof is provided in this paper, results in Table~\ref{tab:ex1_double} and \ref{tab:ex1_ext} strongly suggest our algorithm has similar stability properties as the unitary version in \cite{serkh2021}. 

\begin{figure}[H]
	\centering

	\begin{subfigure}[t]{0.48\textwidth}
		\centering
		\includegraphics[width=\textwidth]{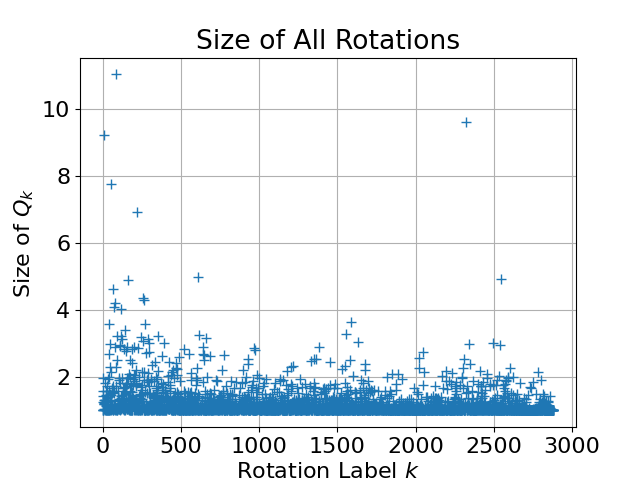}
		\caption{}
		\label{fig:poly1}
	\end{subfigure}	
	~
	\begin{subfigure}[t]{0.48\textwidth}
		\centering
		\includegraphics[width=\textwidth]{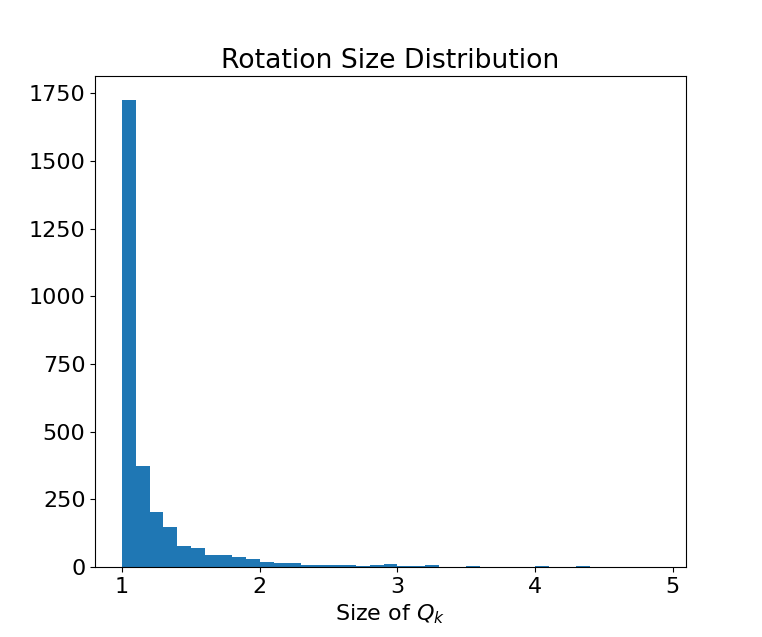}
		\caption{}
		\label{fig:poly2}
	\end{subfigure}
	\caption{The size of all 2882 complex orthogonal rotations defined by $\sqrt{\abs{c}^2+\abs{s}^2}$, in the QR iterations ($n=40$) for finding roots of $f_{\rm poly}$ is shown in (a). The distribution of rotations of size smaller than 5 is shown in (b). \label{fig:poly}}
\end{figure}

\begin{table}[htb]
\centering
\begin{tabular}{l |l|l| l | l}
$n$ & $\norm{q}$ & $n_{\rm roots}$ &$\mathrm{max}_i \eta(\hat{z}_i)$ & Timings (s) \\
\hline
5 & $0.52\cdot 10^{1}$ &5 & $0.10\cdot 10^{-12}$ & $0.53\cdot 10^{-3}$\\
6 &$0.20\cdot 10^{17}$ &5 & $0.25\cdot 10^{-13}$ & $0.53\cdot 10^{-3}$\\
50 &$0.10\cdot 10^{16}$ &5 & $0.19\cdot 10^{-13}$ & $0.15\cdot 10^{-1}$\\
100 &$0.15\cdot 10^{17}$ &5 & $0.64\cdot 10^{-13}$ & $0.37\cdot 10^{-1}$

\end{tabular}
\caption{The results of computing roots of $f_{\rm poly}$, a polynomial of order 5, in double precision, using Algorithm~\ref{alg:nadap_root} with the polynomial expansion order $n=5,6,50$ and $100$.}
\label{tab:ex1_double}
\end{table}

\begin{table}[htb]
\centering
\begin{tabular}{l |l|l| l | l }
$n$ & $\norm{q}$ & $n_{\rm roots}$ &$\mathrm{max}_i \eta(\hat{z}_i)$ & Timings (s) \\
\hline
5 & $0.52\cdot 10^{1}$ &5 & $0.90\cdot 10^{-30}$ & $0.12\cdot 10^{-1}$\\
6 &$0.61\cdot 10^{34}$ &5 & $0.78\cdot 10^{-30}$ & $0.12\cdot 10^{-1}$\\
50 &$0.76\cdot 10^{33}$ &5 & $0.94\cdot 10^{-30}$ & $0.85\cdot 10^{-1}$\\
100 &$0.66\cdot 10^{34}$ &5 & $0.94\cdot 10^{-30}$ & $0.27\cdot 10^{0} $

\end{tabular}
\caption{The results of computing roots of $f_{\rm poly}(z)$, a polynomial of order 5, in extended precision, using Algorithm~\ref{alg:nadap_root} with the polynomial expansion order $n=5,6,50$ and $100$.}
\label{tab:ex1_ext}
\end{table}

\subsection{$f_{\rm mult}$: A polynomial with multiple roots \label{sec:ex_mult_poly}}
Here we consider a polynomial of order $12$ that has identical roots as $f_{\rm poly}$ above with increased multiplicity:
\begin{equation}
	f_{\mathrm{mult}} (z) = (z-0.5)^5(z-0.9)^3(z+0.8)(z-0.7\I)(z+0.1\I)^2.
\end{equation}
We construct a polynomial of order $n=30$ and apply Algorithm~\ref{alg:nadap_root} to find its roots in both double and extended precision. All computed roots and their error estimation are shown in Figure~\ref{fig:mult_poly}. 
The error is roughly proportional to $u^{1/m}$, where $u$ is the machine epsilon and $m$ is the root's corresponding multiplicity. 
 As discussed in Remark~\ref{rmk:mult}, this can be used to distinguish multiple roots from redundant roots when removing redundant roots in Algorithm~\ref{alg:adap_root}.

\begin{figure}[H]
	\centering

	\begin{subfigure}[t]{0.55 \textwidth}
		\centering
		\includegraphics[width=\textwidth]{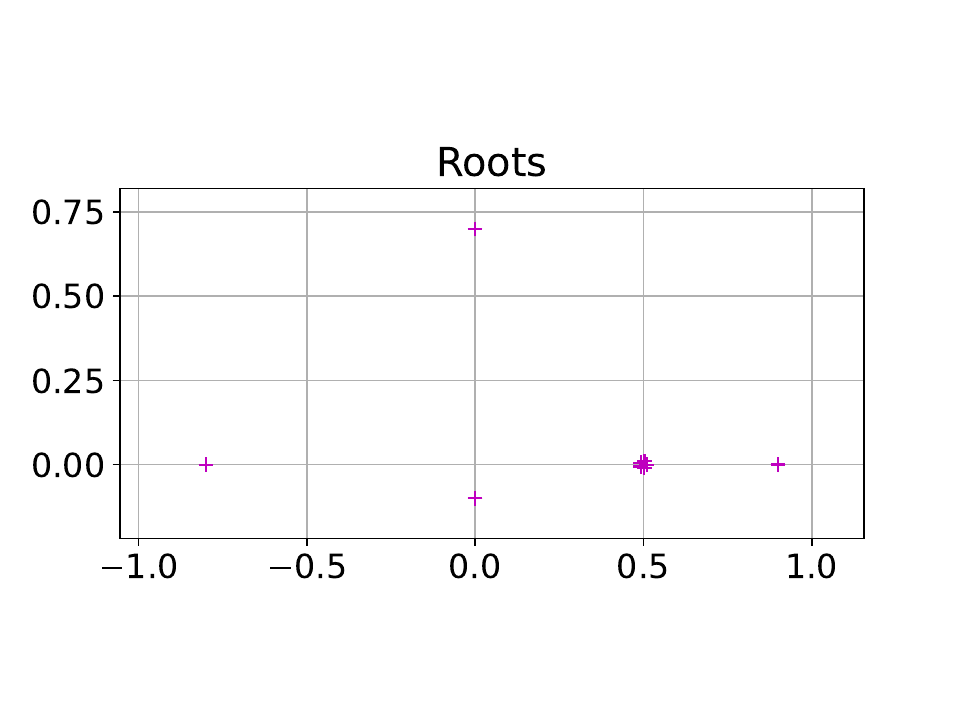}
		\caption{}
		\label{fig:mult_roots}
	\end{subfigure}	

	\begin{subfigure}[t]{0.43\textwidth}
		\centering
		\includegraphics[width=\textwidth]{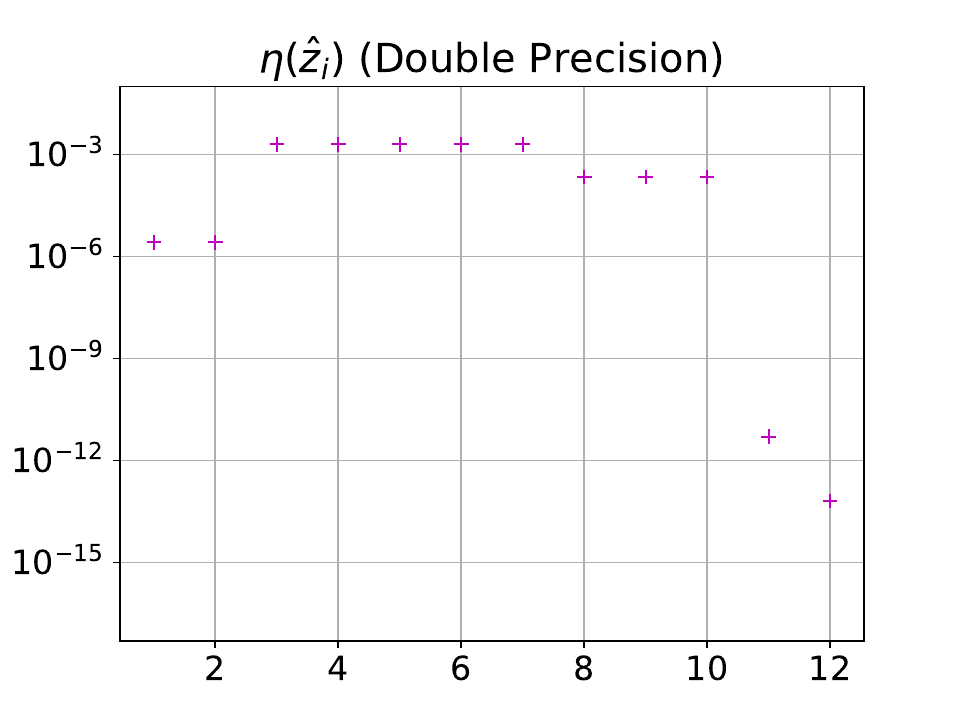}
		\caption{}
		\label{fig:mult_error}
	\end{subfigure}
	~	
	\begin{subfigure}[t]{0.43\textwidth}
		\centering
		\includegraphics[width=\textwidth]{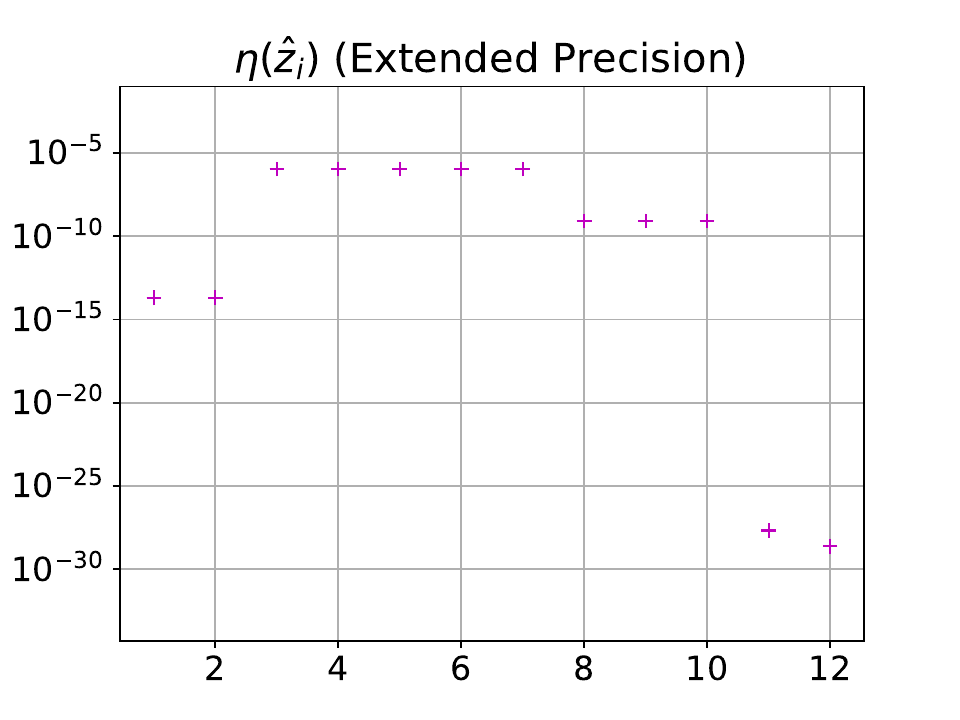}
		\caption{}
		\label{fig:mult_error_ext}
	\end{subfigure}
	\caption{ The roots of $f_{\mathrm{ mult}}$ computed by Algorithm~\ref{alg:nadap_root} in double precision are shown in (a), and their errors in double and extended precision computation are shown in (b) and (c) respectively. The error is roughly proportional to $u^{1/m}$, where $u$ is the machine epsilon and $m$ is the multiplicity of the roots.
	\label{fig:mult_poly}}
\end{figure}

\subsection{$f_{\rm clust}$: A function with clustering zeros\label{sec:cluster}}
Here we consider a function given by
\begin{equation}
	f_{\rm clust}(z) = \sin(\frac{100}{\Exp{\I\pi/4} z -2})\,,
\end{equation}
which has roots clustering around the singularity $z_{\star}=\sqrt{2} -\I\sqrt{2}$ along the line $\theta=-{\pi}/{4}$\,.
We apply Algorithm~\ref{alg:adap_root} to find roots of $f_{\rm clust}$ within a square of side length $2.75$ centered at the origin $z_0=0$.
Near the singularity, the adaptivity of Algorithm~\ref{alg:nadap_root} allows the size of squares to be chosen automatically, so that the polynomial expansions with a fixed order $n$ can resolve the fast oscillation of $f_{\rm clust}$ near the singularity $z_{\star}$.

The results of our numerical experiment are shown in Table~\ref{tab:ex3_double} (in double precision) and Table~\ref{tab:ex3_ext} (in extended precision).
Figure~\ref{fig:cluster} contains the roots $\hat{z}_i$ of $f_{\rm clust}$ found, as well as  every center of squares formed by the recursive subdivision and the error estimation $\eta(\hat{z}_i)$ of roots $\hat{z}_i$.
It is worth noting that the roots found by order $n=45$ expansions are consistently less accurate than those found by order $n=30$ ones, as indicated by error estimation shown in Figure~\ref{fig:cluster4} and \ref{fig:cluster5}. This can be explained by the following observation. The larger the expansion order $n$, the larger the square on which the expansion converges, as shown in Figure~\ref{fig:cluster2} and \ref{fig:cluster3}. On a larger square, the maximum magnitude of the function $f$ tends to be larger as $f$ is analytic.  Since the maximum principle (Theorem~\ref{thm:max}) only provides bounds for the pointwise \emph{absolute} error $\mathrm{max}_{z\in\partial D}\abs{f(z)-p(z)}$ for any approximating polynomial $p$, larger $f$ values on the boundary leads larger errors of $p$ inside. This eventually leads to less accurate roots when a larger order $n$ is adopted.

Although a larger order expansion leads to less accurate roots, increasing $n$ indeed leads to significantly fewer divided squares, as demonstrated by the numbers of eigenvalue problems $n_{\rm eigs}$ shown in Table~\ref{tab:ex3_double} and \ref{tab:ex3_ext}. In practice, one can choose a relatively large expansion order $n$, so that fewer eigenvalue problems need to be solved while the computed roots are still reasonably accurate, achieving a balance between efficiency and robustness. 
Once all roots are located within reasonable accuracy, one Newton refinement will give full machine accuracy to the computed roots.

\begin{figure}
	\centering

	\begin{subfigure}[t]{0.6\textwidth}
		\centering
		\includegraphics[width=\textwidth]{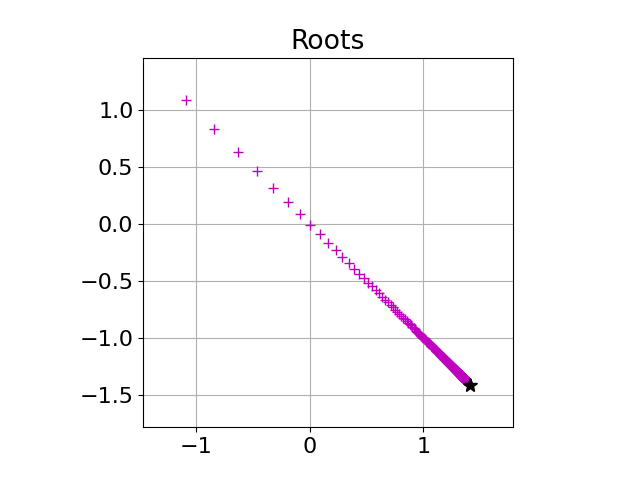}
		\caption{}
		\label{fig:cluster1}
	\end{subfigure}	
	
	\begin{subfigure}[t]{0.48\textwidth}
		\centering
		\includegraphics[width=\textwidth]{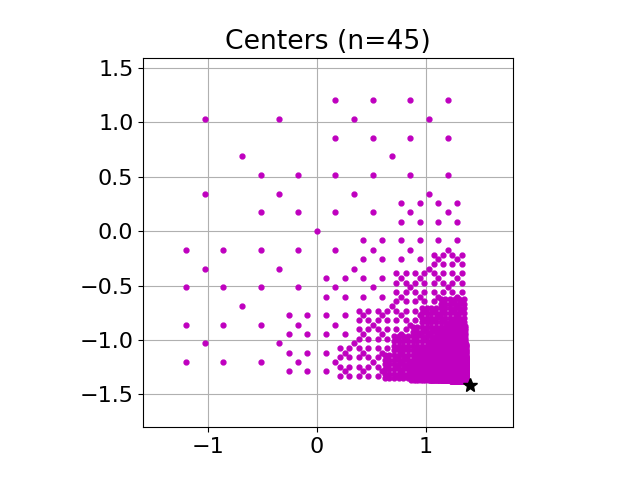}
		\caption{}
		\label{fig:cluster2}
	\end{subfigure}
	~
	\begin{subfigure}[t]{0.48\textwidth}
		\centering
		\includegraphics[width=\textwidth]{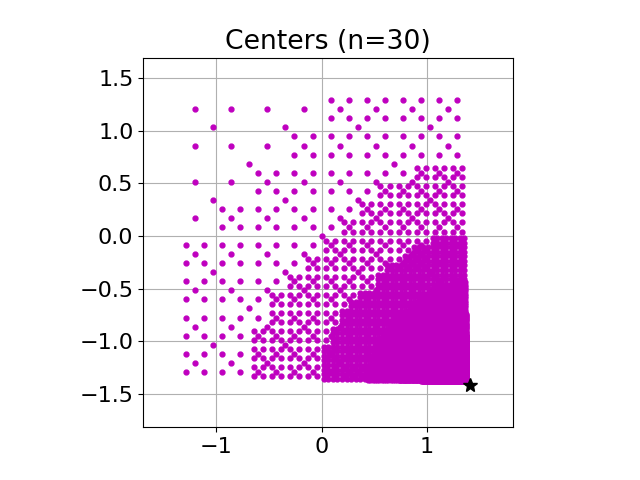}
		\caption{}
		\label{fig:cluster3}
	\end{subfigure}
	
		\centering
	\begin{subfigure}[t]{0.45\textwidth}
		\centering
		\includegraphics[width=\textwidth]{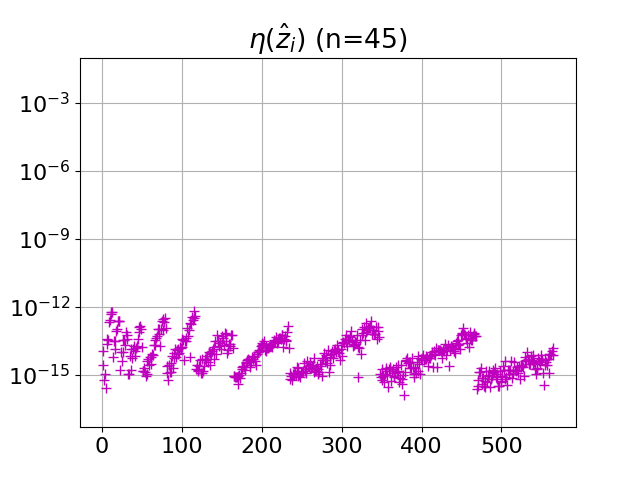}
		\caption{}
		\label{fig:cluster4}
	\end{subfigure}
	~
	\begin{subfigure}[t]{0.45\textwidth}
		\centering
		\includegraphics[width=\textwidth]{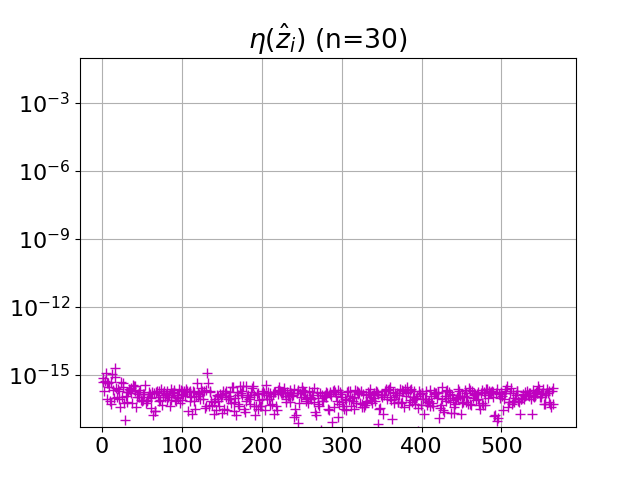}
		\caption{}
		\label{fig:cluster5}
	\end{subfigure}
	\caption{All roots $\hat{z}_i$ of $f_{\rm clust}$ within a square of side length $2.75$ centered at $z_0=0$ found by Algorithm~\ref{alg:adap_root} with polynomials of order $n=30$ are shown in (a). The centers of all squares during the recursive division with polynomials of order $n=45$ and $n=30$ are respectively shown in (b) and (c). The error estimations $\eta(\hat{z}_i)$ of roots $\hat{z}_i$ with polynomials of order $n=45$ and $n=30$ are shown in (d) and (e). The star $\star$ in (a)-(b) indicates the singularity location at $z_{\star}=\sqrt{2}-\I\sqrt{2}$.	\label{fig:cluster}}
\end{figure}

\begin{table}[htb]
\centering
\begin{tabular}{l |l|l| l | l | l}
$n$ & $n_{\rm roots}$ &$\mathrm{max}_i \eta(\hat{z}_i)$ & $n_{\rm levels}$ & $n_{\rm eigs}$ & Timings (s) \\
\hline
45 & 565 & $0.68\cdot 10^{-12}$ & 14 & 8836 & $0.30\cdot 10^1$\\
30 & 565 & $0.19\cdot 10^{-14}$ & 16 & 76864 & $0.14\cdot 10^2$
\end{tabular}
\caption{The results of computing roots of $f_{\rm clust}(z)$ in double precision, using the adaptive Algorithm~\ref{alg:adap_root} with the polynomial expansion order $n=45$ and $n=30$. }
\label{tab:ex3_double}
\end{table}

\begin{table}[h]
\centering
\begin{tabular}{l |l|l| l | l | l }
$n$ & $n_{\rm roots}$ &$\mathrm{max}_i \eta(\hat{z}_i)$ & $n_{\rm levels}$ & $n_{\rm eigs}$  & Timings (s)  \\
\hline
100 & 565 & $0.30\cdot 10^{-23}$ & 13 & 1852 & $0.34\cdot 10^3$\\
70 & 565 & $0.83\cdot 10^{-29}$ & 14 & 8899  & $0.88\cdot 10^3$
\end{tabular}
\caption{The results of computing roots of $f_{\rm clust}(z)$ in extended precision, using the adaptive Algorithm~\ref{alg:adap_root} with the polynomial expansion order $n=100$ and $n=70$. }
\label{tab:ex3_ext}
\end{table}

\subsection{$f_{\rm entire}$: An entire function}
Here we consider a function given by
\begin{equation}
	f_{\rm entire}(z) = \frac{\sin(3\pi z)}{ z-2}\,.
\end{equation}
Since $z=2$ is a simple zero of the numerator $\sin(3\pi z)$, the function $f_{\rm entire}$ is an entire function. This function only has simple roots on the real line. We apply Algorithm~\ref{alg:adap_root} to find roots of $f_{\rm entire}$ within a square of side length $50$ centered at $z_0=10-20\I$. Although our algorithms are not designed for such an environment, Algorithm~\ref{alg:adap_root} is still effective in finding the roots. Figure~\ref{fig:entire} contains the roots $\hat{z}_i$ of $f$ found, every center of squares formed by the recursive division and the error estimation $\eta(\hat{z}_i)$ of root $\hat{z}_i$. The scale of function $f_{\rm entire}$ does not vary much across the whole region of interests, so the algorithm divides the square uniformly. 

The results of our numerical experiment are shown in Table~\ref{tab:ex4_double} (in double precision) and Table~\ref{tab:ex4_ext} (in extended precision). As can be seen from the tables, the larger the expansion order used, the larger the square on which expansions converge, thus the lower the precision of roots computed for the same reason explained in Section~\ref{sec:cluster}. The most economic approach is to combine a reasonably large expansion order $n$ and a Newton refinement in the end.

\begin{figure}[H]
	\centering

	\begin{subfigure}[t]{0.65\textwidth}
		\centering
		\includegraphics[width=\textwidth]{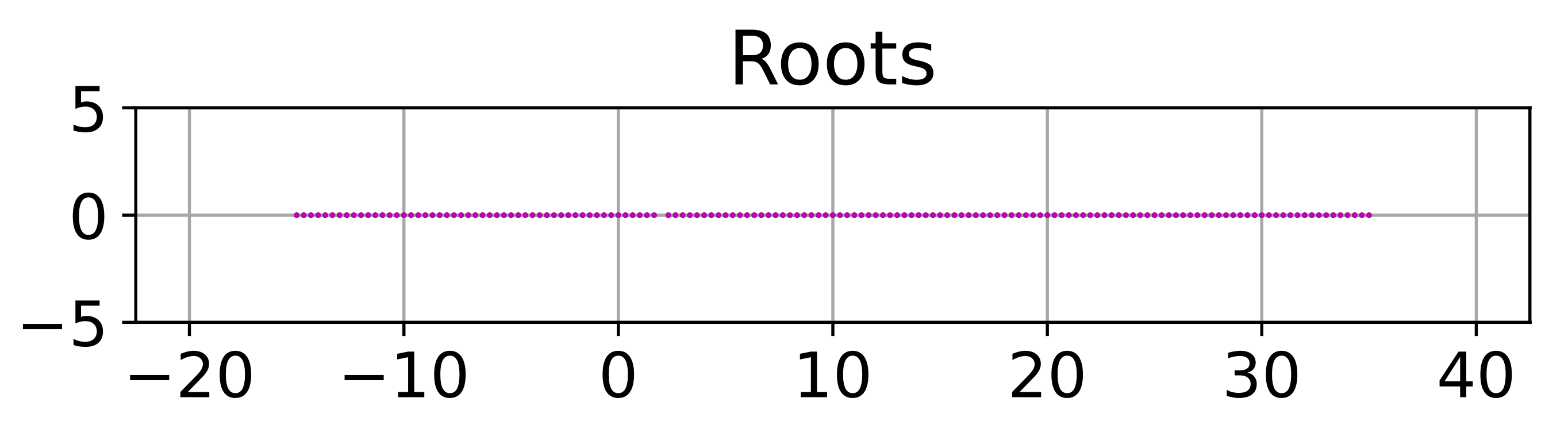}
		\caption{}
		\label{fig:entire1}
	\end{subfigure}	
	
	\begin{subfigure}[t]{0.48\textwidth}
		\centering
		\includegraphics[width=\textwidth]{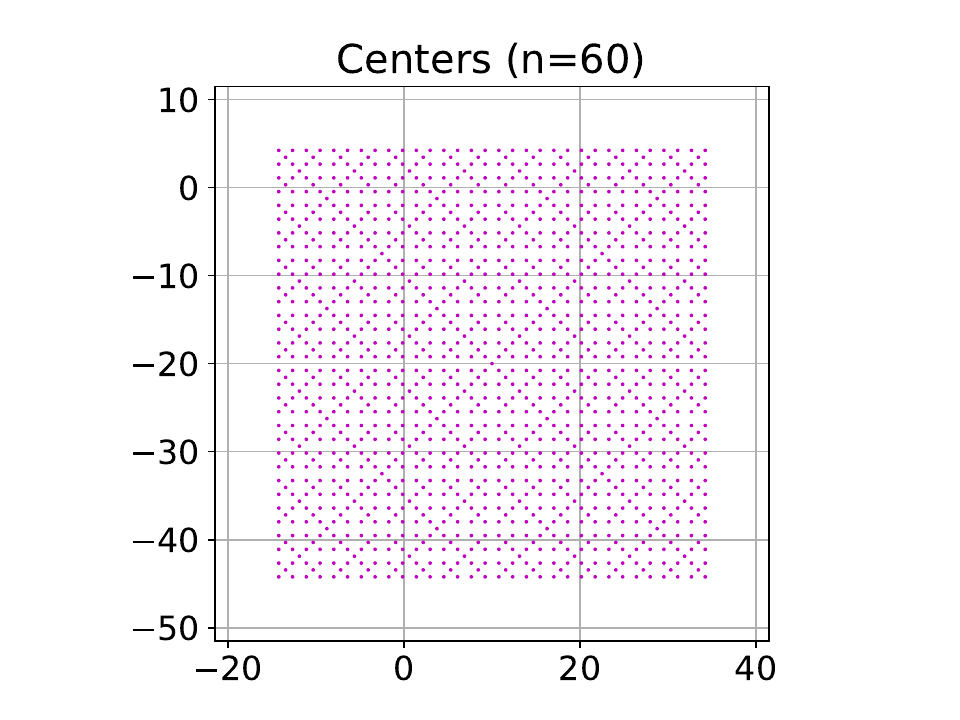}
		\caption{}
		\label{fig:entire2}
	\end{subfigure}
	~
	\begin{subfigure}[t]{0.48\textwidth}
		\centering
		\includegraphics[width=\textwidth]{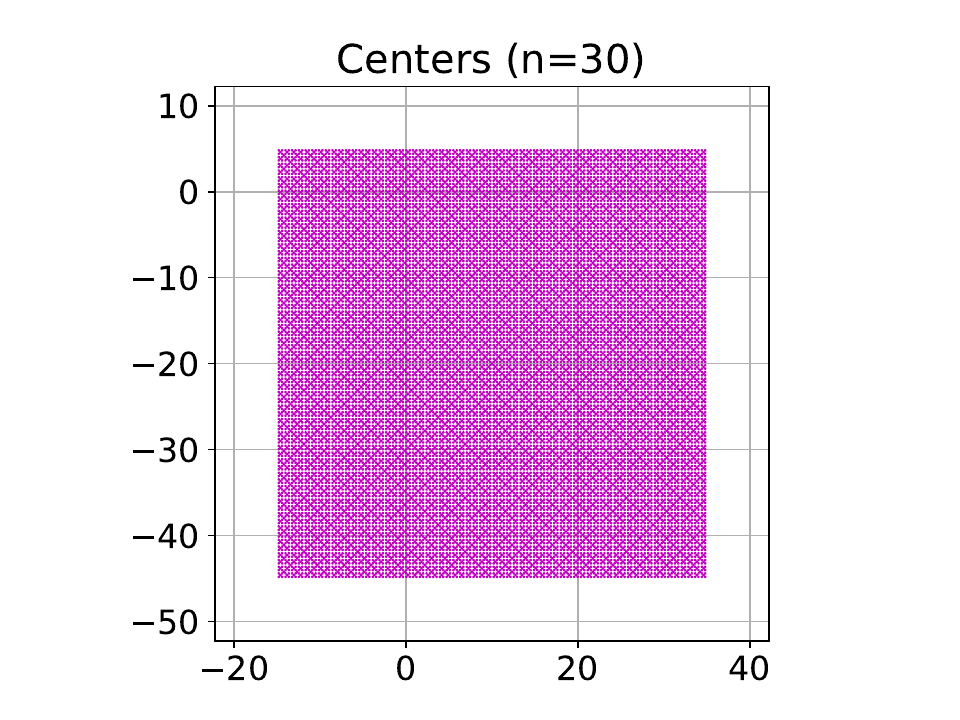}
		\caption{}
		\label{fig:entire3}
	\end{subfigure}
	
		\centering
	\begin{subfigure}[t]{0.45\textwidth}
		\centering
		\includegraphics[width=\textwidth]{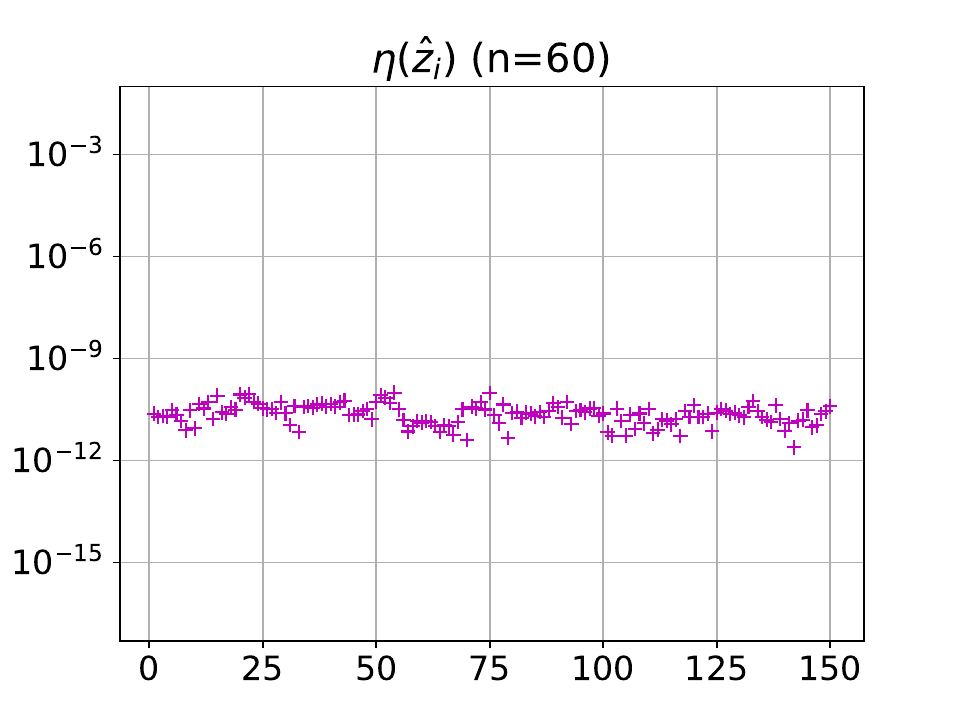}
		\caption{}
		\label{fig:entire4}
	\end{subfigure}
	~
	\begin{subfigure}[t]{0.45\textwidth}
		\centering
		\includegraphics[width=\textwidth]{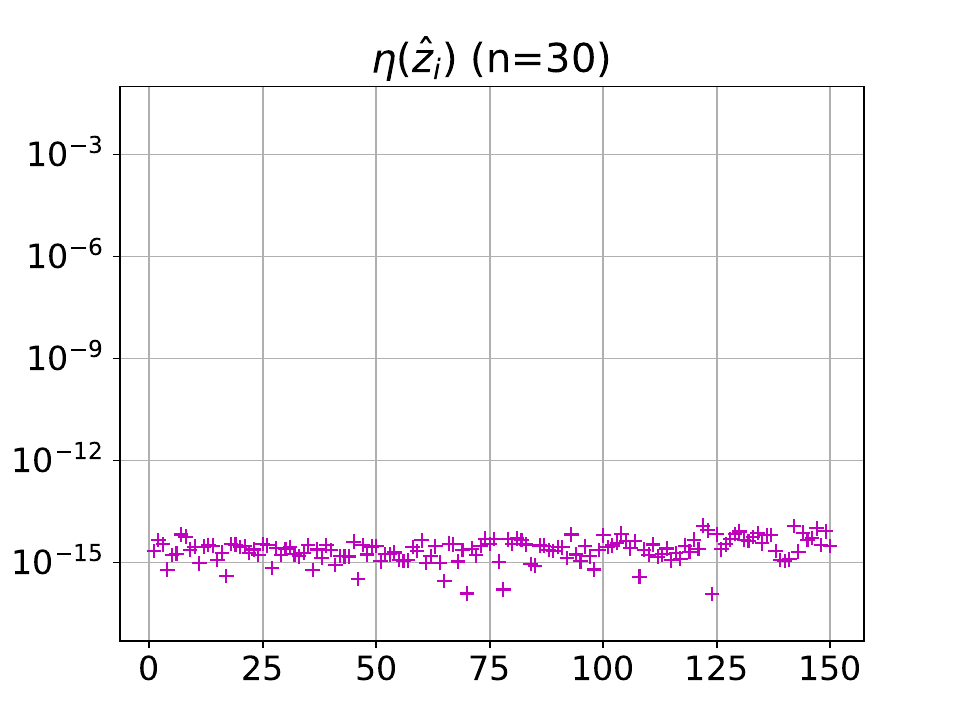}
		\caption{}
		\label{fig:entire5}
	\end{subfigure}
	\caption{All roots $\hat{z}_i$ of $f_{\rm entire}$ within a square of side length $50$ centered at $z_0=10-20\I$ found by Algorithm~\ref{alg:adap_root} with polynomials of order $n=30$ are shown in (a). The centers of all squares during the recursive division with polynomials of order $n=60$ and $n=30$ are respectively shown in (b) and (c). The error estimations $\eta(\hat{z}_i)$ of roots $\hat{z}_i$ with polynomials of order $n=60$ and $n=30$ are shown in (d) and (e).	\label{fig:entire}}
\end{figure}

\begin{table}[h]
\centering
\begin{tabular}{l |l|l| l | l | l }
$n$ & $n_{\rm roots}$ &$\mathrm{max}_i \eta(\hat{z}_i)$ & $n_{\rm levels}$ & $n_{\rm eigs}$  & Timings (s)  \\
\hline
60 & 150 & $0.99\cdot 10^{-10}$ & 6 & 1024 & $0.56\cdot 10^0$\\
30 & 150 & $0.22\cdot 10^{-13}$ & 8 & 16384  & $0.31\cdot 10^1$
\end{tabular}
\caption{The results of computing roots of $f_{\rm entire}(z)$ in double precision, using the adaptive Algorithm~\ref{alg:adap_root} with the polynomial expansion order $n=60$ and $n=30$. }
\label{tab:ex4_double}
\end{table}

\begin{table}[h]
\centering
\begin{tabular}{l |l|l| l | l | l }
$n$ & $n_{\rm roots}$ &$\mathrm{max}_i \eta(\hat{z}_i)$ & $n_{\rm levels}$ & $n_{\rm eigs}$  & Timings (s)  \\
\hline
100 & 150 & $0.14\cdot 10^{-23}$ & 5 & 256 & $0.46\cdot 10^2$\\
70 & 150 & $0.35\cdot 10^{-26}$ & 6 & 1024  & $0.99\cdot 10^2$\\
65 & 150 & $0.30\cdot 10^{-29}$ & 7 & 4096  & $0.36\cdot 10^3$
\end{tabular}
\caption{The results of computing roots of $f_{\rm entire}$ in extended precision, using the adaptive Algorithm~\ref{alg:adap_root} with the polynomial expansion order $n=100,70$ and $65$. }
\label{tab:ex4_ext}
\end{table}


\section{Conclusions\label{sec:conclude}}
In this paper, we described a method for finding all roots of analytic functions in square domains in the complex plane, which can be  viewed as a generalization of rootfinding by classical colleague matrices on the interval. This approach relies on the observation that complex orthogonalizations, defined by complex inner product with random weights, produce
reasonably well-conditioned bases satisfying three-term recurrences in compact simply-connected domains.
This observation is relatively insensitive to the shape of the domain and locations of points chosen for construction. We demonstrated by numerical experiments the condition numbers of constructed bases scale almost linearly with the order of the basis. 
When such a basis is constructed on a square domain, all roots of polynomials expanded in this basis are found as eigenvalues of a class of generalized colleague matrices. As a result, all roots of an analytic function over a square domain are found by finding those of a proxy polynomial that approximates the function to a pre-determined accuracy. Such a generalized colleague matrix is lower Hessenberg and consist of a complex symmetric part with a rank-1 update. Based on this structure, a complex orthogonal QR algorithm, which is a straightforward extension of the one in \cite{serkh2021}, is introduced for computing  eigenvalues of generalized colleague matrices. The complex orthogonal QR also takes $O(n^2)$ operations to find all roots of a polynomial of order $n$
and exhibits structured backward stability, as demonstrated by numerical experiments. 

Since the method we described is not limited to square domains, this work can be easily generalized to rootfindings over general compact complex domains. Then the corresponding colleague matrices will have structures identical to the one in this paper, so our complex orthogonal QR can be applied similarly. 
However, in some cases, when the conformal map is analytic and can be computed accurately, it is easier to map the rootfinding problem to the unit disk from the original domain, so that companion matrices can be constructed and eigenvalues are computed by the algorithm in \cite{aurentz2018fast}.

There are several problems in this paper that require further study. 
First, the observation that complex inner products defined by random complex weights lead to well-conditioned polynomial basis is not understood. Second, the numerical stability of our QR algorithm is not proved due to the use of complex orthogonal rotations.
Numerical evidence about mentioned problems have been provided in this paper and they are under vigorous investigation. Their analysis and proofs will appear in future works. 

\section*{Acknowledgement}
The authors would like to thank Kirill Serkh whose ideas form the foundation of this paper.
\vspace{5mm}

\newpage

\bibliographystyle{abbrv}
\bibliography{root} 


\end{document}